\documentclass[11pt,dvipsnames,reqno,hypertexnames=false]{amsart}
\usepackage{xcolor}
\usepackage[most]{tcolorbox}
\usepackage{a4wide}
\usetikzlibrary{calc,fadings}
\usepackage{yhmath,mathrsfs,amsthm,amsmath,amssymb,amsfonts,enumerate,lipsum, appendix,mathtools, bm}
\usepackage{bbold,comment}
\usepackage{pdfpages}
\usepackage{orcidlink}
\usepackage[mathscr]{euscript}
\usepackage{graphicx}
\usepackage{pgf,tikz}
\usepackage{tkz-euclide}
\usepackage{graphicx}
\usepackage{subcaption}
\usetikzlibrary{shapes.geometric}
\usetikzlibrary{arrows,hobby}
\usepackage[shortlabels]{enumitem}
\usepackage[english]{babel}

\allowdisplaybreaks

\usepackage[sort,numbers]{natbib}
\usepackage{hyperref}
\usepackage{esint}

\hypersetup{
	colorlinks=true,
	linkcolor=blue,
	filecolor=blue,      
	urlcolor=I,
	citecolor=I,
}
\definecolor{I}{rgb}{0.01, 0.75, 0.24}%{0.72, 0.25, 0.05}
\usepackage[margin=0.90in, heightrounded]{geometry}

\usepackage{bigints}
\usepackage{esint}
\newtheorem{theorem}{Theorem}[section]
\newtheorem{lemma}[theorem]{Lemma}
\newtheorem{proposition}[theorem]{Proposition}

\newtheorem{definition}[theorem]{Definition}
\theoremstyle{definition}

\newtheorem{remark}[theorem]{Remark}
\numberwithin{equation}{section}
\usepackage[hyperpageref]{backref}

\usepackage{fancyhdr}
\newcommand*\N{\mathbb{N}}
\newcommand{\al} {\alpha}

\newcommand{\pa} {\partial}
\newcommand{\be} {\beta}
\newcommand{\de} {\delta}

\newcommand{\Om} {\Omega}
\newcommand{\la} {\lambda}
\newcommand{\La} {\Lambda}
\newcommand{\si} {\sigma}

\newcommand{\no} {\nonumber}
\newcommand{\noi} {\noindent}
\newcommand{\eps} {\varepsilon}

\newcommand{\ra} {\rightarrow}

\newcommand{\bee} {\begin{equation}}
	\newcommand{\eee} {\end{equation}}
\newcommand{\bea} {\begin{eqnarray}}
	\newcommand{\eea} {\end{eqnarray}}
\newcommand{\Bea} {\begin{eqnarray*}}
	\newcommand{\Eea} {\end{eqnarray*}}

\def\d{\,{\rm d}}
\def\dx{\,{\rm d}x}
\def\dy{\,{\rm d}y}
\def\dt{\,{\rm d}t}

\def\ds{\,{\rm d}s}

\def\RN{{\mathbb R}^N}
\def\({{\Big(}}
\def\){{\Big)}}

\def\dt{\,{\rm d}t}
\def\dx{\,{\rm d}x}
\def\dz{\,{\rm d}z}

\DeclarePairedDelimiter\abs{\lvert}{\rvert}%
\DeclarePairedDelimiter\norm{\lVert}{\rVert}%

\vspace{-1.5\baselineskip}

\usepackage{xpatch}
\makeatletter   
\xpatchcmd{\@tocline}
{\hfil\hbox to\@pnumwidth{\@tocpagenum{#7}}\par}
{\ifnum#1<0\hfill\else\dotfill\fi\hbox to\@pnumwidth{\@tocpagenum{#7}}\par}
{}{}

\def\l@subsection{\@tocline{2}{0pt}{2pc}{6pc}{}}
\def\l@subsubsection{\@tocline{3}{0pt}{8pc}{8pc}{}}
\makeatother
\DeclareMathOperator*{\essinf}{ess\,inf}
\let\tmp\phi \let\phi\varphi \let\varphi\tmp

\newcommand{\fra}{(-\Delta)^s}
\newcommand{\del}{-\Delta}

\newcommand{\R}{\mathbb{R}}
\newcommand{\RR}{\mathbb{R}^N}

\newcommand{\ov}{\overline}

\newcommand{\NN}{\mathbb{N}}

\newcommand{\A}{\mathbb{A}}
\renewcommand{\AA}{\mathcal{A}}
\newcommand{\BB}{\mathcal{B}}
\newcommand{\CC}{\mathcal{C}}
\newcommand{\DD}{\mathcal{D}}

\newcommand{\GG}{\mathcal{G}}

\newcommand{\HH}{\mathcal{H}}
\newcommand{\KK}{\mathcal{K}}
\newcommand{\LL}{\mathcal{L}}

\newcommand{\PP}{\mathcal{P}}
\renewcommand{\SS}{\mathcal{S}}
\newcommand{\XX}{\mathcal{X}}

\newcommand\restr[2]{{% we make the whole thing an ordinary symbol
  \left.\kern-\nulldelimiterspace % automatically resize the bar with \right
  #1 % the function
  \littletaller % pretend it is a little taller at normal size
  \right|_{#2} % this is the delimiter
  }}
\usepackage[sort,numbers]{natbib}
\newcommand{\littletaller}{\mathchoice{\vphantom{\big|}}{}{}{}}

\title[]{Brezis-Nirenberg problems for mixed local-nonlocal operators with superlinear perturbations: compactness and applications}
\author[M. Bhakta, N. Biswas, and P. Das]{Mousomi Bhakta\,$^\dagger$,
Nirjan Biswas\,\orcidlink{0000-0002-3528-8388}, \and Paramananda Das\,\orcidlink{0009-0009-2821-6675}}
\address{Department of Mathematics, Indian Institute of Science Education and Research (IISER-Pune), Dr. Homi Bhabha Road, Pune-411008, India}
\email[M. Bhakta]{mousomi@iiserpune.ac.in} \email[N. Biswas]{nirjan.biswas@acads.iiserpune.ac.in, nirjaniitm@gmail.com} 
\email[P. Das]{paramananda.das@students.iiserpune.ac.in, pd348225@gmail.com}
\thanks{$^\dagger$Corresponding author}	
\subjclass[2020]{35B33, 35J60, 35J65, 47J30}
\keywords{critical exponent problem, mixed local-nonlocal operators, compactness, infinitely many sign-changing solutions}
\begin{document}
\begin{abstract}
In this paper, we consider the following mixed local nonlocal Brezis-Nirenberg problem 
\begin{equation}\label{crit_pro_abstract}\tag{$\mathcal{P}_{2^*}$}
-\Delta u+(-\Delta)^s u=\lambda |u|^{p-2}u+|u|^{2^*-2}u\text{ in  }\Omega,\quad
u=0\text{ in }\mathbb{R}^N \setminus \Omega,
\end{equation}
where $\Omega\subset\mathbb{R}^N$ is a smooth bounded domain, $N\geq3$, $s\in(0,1)$, $\lambda>0$, and $2\leq p<2^*=\frac{2N}{N-2}$.
We establish a compactness result for the following class of subcritical/critical problems
    \begin{equation}\label{sub_pro_abstract}\tag{$\mathcal{P}_{p_n}$}
    -\Delta u+(-\Delta)^s u=\lambda |u|^{p-2}u+|u|^{p_n-2}u\text{ in  }\Omega,\quad
    u=0\text{ in }\mathbb{R}^N \setminus \Omega,
\end{equation}
where $p_n \in (p,2^* ]$ and $p_n\to 2^*$. Specifically, for $p \in (2+\frac{4s}{N-2},2^*)$ when $N>6-4s$, and for $p \in (2^*-1,2^*)$ when $N\leq6-4s$, we prove that any bounded sequence of solutions $\{u_n\}$ to \eqref{sub_pro_abstract} is relatively compact in the energy space, and converges strongly to a nontrivial solution to \eqref{crit_pro_abstract}. To the best of our knowledge, this is the first paper to address this type of compactness result for a non-homogeneous operator. Due to the presence of the non-homogeneous operator, proving the compactness result requires several delicate new and novel estimates, which we believe will be of independent interest for further studies of related problems. As an application of this compactness result, under the same ranges of $N$ and $p$, we prove that \eqref{crit_pro_abstract} admits infinitely many sign-changing solutions.
\end{abstract}

\maketitle
\tableofcontents
\section{Introduction}
This paper deals with the following superlinear Brezis-Nirenberg problem driven by the mixed local-nonlocal operator: 
\begin{equation}\tag{$\PP_{2^*}$}\label{main_PDE}
\begin{cases}
            \del u+\fra u=\lambda |u|^{p-2}u+|u|^{2^\ast-2}u \; &\text{in }\Om,\\
            u=0&\text{in }\RR \setminus \Om,
        \end{cases}
\end{equation}
where $\Om$ is a smooth bounded domain  in  $\RR$ with $N \ge 3$, $s\in(0,1)$, $\la> 0$ is a parameter, $2\leq p<2^*:= \frac{2N}{N-2}$, the critical Sobolev exponent,  and fractional Laplacian is defined for smooth enough functions as $$\fra u(x)=c_{N,s}\,\text{P.V.}\int_{\RR}\frac{u(x)-u(y)}{|x-y|^{N+2s}}\dx\dy,$$
where $c_{N,s}$ is a normalization constant.

\subsection{Main results} 
We first consider the space
\begin{equation}
  X_0(\Omega):=\left\{u\in H^1(\RR):u|_{\Om}\in H_0^1(\Om),\, u=0\text{ a.e. in }\RR\setminus\Om\right\}.
\end{equation}
It is easy to see that $X_0(\Omega)$ is a Hilbert space with the norm 
$$\rho(u):=\left(\|\nabla u\|_2^2+[u]_s^2\right)^{\frac{1}{2}},$$ which is associated with the inner product $$\langle u,v\rangle=\int_{\Om}\nabla u\cdot\nabla v\dx+\iint_{\R^{2N}}\frac{(u(x)-u(y))(v(x)-v(y))}{|x-y|^{N+2s}}\dx\dy.$$
Here $[\cdot]_s$, called the Gagliardo seminorm, is defined as $$[u]_s^2=\iint_{\R^{2N}}\frac{|u(x)-u(y)|^2}{|x-y|^{N+2s}}\dx\dy.$$
Note that that the norm $\rho$ is equivalent to the gradient norm $\norm{\nabla\cdot}_{L^2(\Om)}$ and as a consequence, $X_0(\Omega)$ is continuously embedded into $L^r(\Om)$ for every $r\in[1,2^*]$ and the embedding is compact for $r<2^*$. 
A function $u\in X_0(\Omega)$ is a said to be a weak solution to $$\del u+\fra u=f(u)\text{ in }\Om, \quad u=0\text{ in }\R^N \setminus \Omega,$$ if for every $v\in X_0(\Omega)$ it holds
\begin{equation}\label{weak}
        \int_{\Om}\nabla u\cdot\nabla v\dx+\iint_{\RR}\frac{(u(x)-u(y))(v(x)-v(y))}{|x-y|^{N+2s}}\dx\dy=\int_{\Om} f(u)v\dx.
\end{equation}
We now present our main theorem, which asserts that every bounded sequence of weak solutions associated with a family of  superlinear perturbation of subcritical/critical problems converges strongly in $X_0(\Omega)$ to a weak solution of the critical problem \eqref{main_PDE}. This is known as `\textit{compactness}' in the literature.
\begin{theorem}[Compactness]\label{Uniform_Theorem}
    Let $\la>0$, $N \ge 3$, $s \in (0,1)$ and 
\begin{equation}\label{p_cond}\tag{$A_p$}
        p\in\begin{cases}
            (2+\frac{4s}{N-2},2^*),& \text{if }N>6-4s;\\
            (2^*-1,2^*),& \text{if }N\leq 6-4s.
        \end{cases}
    \end{equation}
In addition, we assume that $s>\frac{1}{2}$ if $N=3$. Let $\{u_n\}$ be a bounded  sequence in $X_0(\Omega)$ such that for each $n \in \N$, $u_n$ weakly solves \begin{equation}\label{sub_pro}%\tag{$\PP_{p_n}$}
\begin{cases}
    \del u+\fra u=\lambda |u|^{p-2}u+|u|^{p_n-2}u&\text{in }\Om,\\
    u=0&\text{in }\RR \setminus \Om,
\end{cases}
\end{equation}
where $p_n\in(p,2^* ]$ and $p_n\to 2^*$. Then, up to a subsequence, $u_n \ra u$ in $X_0(\Omega)$ and $u$ weakly solves \eqref{main_PDE}.
\end{theorem}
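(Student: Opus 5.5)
The plan follows the Devillanova--Solimini strategy for compactness of bounded sequences of solutions to Brezis--Nirenberg type problems, adapted to the non-homogeneous operator $-\Delta+(-\Delta)^s$.

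\textbf{Reduction to ruling out bubbles.} Since $\{u_n\}$ is bounded in $X_0(\Omega)$, up to a subsequence $u_n\rightharpoonup u$ weakly in $X_0(\Omega)$, strongly in $L^r(\Omega)$ for $r<2^*$, and a.e. Passing to the limit in \eqref{weak} with $p_n\to 2^*$ (using Vitali for the term $|u_n|^{p_n-2}u_n v$) shows that $u$ weakly solves \eqref{main_PDE}.

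If compactness fails, I apply a profile decomposition (Struwe type) to $u_n$. Under the rescaling $\mu_n^{(N-2)/2}u_n(\mu_n x+x_n)$ with $\mu_n\to0$, the nonlocal term carries a factor $\mu_n^{2-2s}\to0$. Hence the profiles are solutions of $-\Delta U=|U|^{2^*-2}U$ in $\mathbb R^N$ (or in a half space, excluded by Pohozaev), and
\[
u_n=u+\sum_{k}\mu_{n,k}^{-(N-2)/2}U_k\bigl((\cdot-x_{n,k})/\mu_{n,k}\bigr)+o(1).
\]
Since $p_n\le 2^*$, the subcritical growth is handled by noting that $\mu_n^{2^*-p_n}$ stays bounded, after choosing the scaling appropriately. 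The goal is then to show that no bubble can exist.

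\textbf{Uniform local estimates near concentration points.} This is the heart of the argument and the main obstacle. Following Devillanova--Solimini and Cao--Peng--Yan, I would establish uniform decay estimates in annuli $A_n=B(x_n,C\delta_n)\setminus B(x_n,\delta_n)$ around a concentration point $x_n$, where the rescaled functions are small. First, Moser iteration on \eqref{sub_pro} with cutoffs supported on shrinking annuli gives uniform $L^\infty$-type bounds away from the bubble cores. The difficulty is that cutoff functions interact with $(-\Delta)^s$ nonlocally: commutator terms of the form
\[
\iint \frac{(u(x)-u(y))(\phi(x)-\phi(y))\cdots}{|x-y|^{N+2s}}
\]
produce tail contributions from the whole of $\Omega$. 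I would bound these using $[u_n]_s$ bounded and the fact that the cutoffs have gradient of size $1/\delta_n$. This yields errors of order $\delta_n^{2-2s}$ relative to the local terms, which is exactly where the threshold $p>2+\frac{4s}{N-2}$ should enter. Second, iterating over a finite number of dyadic annuli, I would obtain
\[
\int_{B(x_n,\delta_n)}\bigl(|u_n|^{2^*}+|\nabla u_n|^2\bigr)\to0
\]
on suitably chosen layers, with explicit rates in $\delta_n$.

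\textbf{Local Pohozaev identity and contradiction.} Test \eqref{sub_pro} against $(x-x_n)\cdot\nabla u_n$ on $B(x_n,\delta_n)$. The local Laplacian gives the classical identity, with boundary terms controlled by the previous step. The perturbation contributes a positive term
\[
\lambda\Bigl(\frac{N}{p}-\frac{N-2}{2}\Bigr)\int_{B(x_n,\delta_n)}|u_n|^p,
\]
and the $|u|^{p_n}$ term has a favorable sign since $p_n\le 2^*$. The nonlocal part yields $\frac{2s-N}{2}$-type terms plus remainders from the interaction with the complement of the ball, which I would estimate by the tail bounds above. A lower bound $\int_{B(x_n,\delta_n)}|u_n|^p\gtrsim\mu_n^{N-\frac{(N-2)p}{2}}$ comes from the bubble. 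This is to be compared with upper bounds on the boundary and nonlocal remainders of order $\mu_n^{N-2}$-like powers times $\delta_n^{-N}$, and $\mu_n^{2-2s}$ from the fractional part. Choosing $\delta_n$ as a suitable power of $\mu_n$ makes the left side dominate precisely when $p$ satisfies \eqref{p_cond}:
\begin{itemize}
\item for $N>6-4s$ the binding constraint is the nonlocal remainder $\mu_n^{2-2s}$, giving $p>2+\frac{4s}{N-2}$;
\item for small $N$ the boundary decay constraint gives $p>2^*-1$;
\item the extra condition $s>\frac12$ when $N=3$ ensures that the tail integrals converge.
\end{itemize}
This contradiction rules out bubbles, so $u_n\to u$ in $X_0(\Omega)$.
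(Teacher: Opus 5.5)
Your outline has the right architecture: local Talenti bubbles whose nonlocal energy is $O(\sigma_n^{2s-2})$, safe annuli at scale $\sigma_n^{-1/2}$, a local Pohozaev identity, and a comparison of the lower bound $\sigma_n^{\frac{(N-2)p}{2}-N}$ with the upper bounds $\sigma_n^{2s-2}$ and $\sigma_n^{1-\frac N2}$. Your final thresholds also match \eqref{p_cond}. But the two steps that carry all the weight are asserted rather than constructed, and in the form you propose they are exactly what breaks down for the mixed operator.

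\textbf{The local Pohozaev identity.} Testing $(-\Delta)^s u_n$ against $\phi\,(x-x_0)\cdot\nabla u_n$, with a cutoff $\phi$ at scale $\delta_n$, does not split into ``$\frac{2s-N}{2}$ times a local energy plus tail remainders''. The dilation structure behind $\frac{2s-N}{2}[u]_s^2$ on $\mathbb{R}^N$ is destroyed by the cutoff. The leftover commutator pairs the bubble's gradient in the core (of size $\sigma_n^{N/2}$) with differences $u_n(x)-u_n(y)$ to points outside the ball. These are not tail terms, and boundedness of $[u_n]_s$ does not control them at the rate the comparison needs; no such localized identity for $(-\Delta)^s$ is available.

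The missing idea is the Caffarelli--Silvestre extension. Since $\text{div}(y^{1-2s}\nabla U_n)=0$ is a local equation, the paper runs the classical Pohozaev computation on $B_n^+\cap\{y>\varepsilon\}$ and lets $\varepsilon\to0$. It then uses $-\Delta u_n-\lim_{y\to0^+}y^{1-2s}\partial_yU_n=f_n(u_n)$ to merge the flat boundary term with the Laplacian part. The nonlocal contribution appears as the local weighted Dirichlet integral $(1-s)\int_{B_n^+}y^{1-2s}|\nabla U_n|^2\phi_n\le C\sigma_n^{2s-2}$, and it is this term that produces $p>2+\frac{4s}{N-2}$. It is not a cutoff error of order $\delta_n^{2-2s}$: for $\delta_n\sim\sigma_n^{-1/2}$ that would be $\sigma_n^{s-1}$ and would give a different threshold.

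Making this rigorous needs two further ingredients:
\begin{itemize}
\item the bound on $y^{1-2s}|\partial_yU_n|$ from Lemma~\ref{convergence}, which blows up like $d(x)^{1-2s}$ at $\partial\Omega$ when $s>\frac12$, to pass $\varepsilon\to0$;
\item when the ball meets $\partial\Omega$, a center $x_0\notin\Omega$ chosen so that $\frac12\int_{B_n^N\cap\partial\Omega}|\nabla u_n|^2\phi_n\,(x-x_0)\cdot\nu\le 0$. Centered at $x_n\in\Omega$, as you propose, this term in general has the wrong sign.
\end{itemize}

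\textbf{The safe-region estimate.} Moser iteration on balls of radius $\sim\sigma_n^{-1/2}$, starting from the $L^{2^*}$ bound, gives only $\sup|u_n|\lesssim\sigma_n^{(N-2)/4}$, not a uniform bound. For the mixed operator it also carries a tail term that must be controlled uniformly in the scale. What is needed is the scale-invariant bound $\bigl(r^{-N}\int_{B_r}|u_n|^\gamma\bigr)^{1/\gamma}\le C$ for all $r\ge\overline{C}\sigma_n^{-1/2}$ (Proposition~\ref{estimate-1}).

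The paper obtains it in three moves:
\begin{itemize}
\item it dominates $|u_n|\le v_n$, where $v_n>0$ solves the problem with right-hand side $A|u_n|^{2^*-1}+A$ on a larger domain;
\item it applies the weak Harnack inequality and the Byun--Song potential estimate;
\item it bounds $\int_r^R\rho^{1-N}\int_{B_\rho}|u_n|^{2^*-1}$ through the Devillanova--Solimini auxiliary norm $\|u_n\|_{q_1,q_2,\sigma_n}\le C$, itself obtained by bootstrapping Green-operator estimates.
\end{itemize}
Only then does a rescaled Moser iteration, run simultaneously for $u_n$ and $U_n$ and using smallness of $\int|u_n|^{2^*}$ on safe balls, yield three bounds: $\int|u_n|^q\le C\sigma_n^{-N/2}$, the weighted bounds on $U_n$, and the gradient bound $C\sigma_n^{1-\frac N2}$ on the annulus.

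Two smaller points. Smallness of $\int(|u_n|^{2^*}+|\nabla u_n|^2)$ can hold only on the annulus, not on $B(x_n,\delta_n)$, which contains the bubble. And $s>\frac12$ for $N=3$ is needed so that $[\hat u]_s<\infty$, i.e.\ so that the bubbles have finite nonlocal energy of order $\sigma_n^{2s-2}$; it is not about convergence of tail integrals.
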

Regarding the compactness result, we make a few remarks. Set \begin{equation}
    p_0:=\begin{cases}
    2+\frac{4s}{N-2},& \text{if }N>6-4s;\\
    2^*-1, &\text{if }N\leq6-4s.
\end{cases}
\end{equation}
\begin{remark}
   One may expect the compactness phenomenon to persist for all $p>2$. On the other hand, in the limiting case $p=2$, compactness cannot, in general, be expected. To see this, first notice that if $p=2$ and $\lambda<\lambda_1$, where $\lambda_1$ is the first eigenvalue of $(-\Delta+(-\Delta)^s,\Omega)$ and $\{u_n\}$ is a bounded sequence of nontrivial solutions to \eqref{sub_pro}, then if the compactness Theorem~\ref{Uniform_Theorem} holds that would imply $u_n\to u_0$ for some $u_0\in X_0$. Further, 
$$
    \rho(u_n)^2
    = \lambda \|u_n\|_2^2 + \|u_n\|_{p_n}^{p_n}
    \leq \frac{\lambda}{\lambda_1}\rho(u_n)^2
    + C^{p_n}\rho(u_n)^{p_n}.
$$
In other words,
$$
\rho(u_n)
\geq
\left(\frac{\lambda_1-\lambda}{\lambda_1}\right)^{\frac{1}{p_n-2}}
C^{\frac{-p_n}{p_n-2}}.$$
Since $p_n\to 2^*$, it follows that $\rho(u_n)\geq C$ for some $C$ independent of $n$ and hence $\rho(u_0)\geq C$. Now following the same method as in the proof of Theorem~\ref{Existence_Theorem} would imply $u_0$ is a nontrivial solution to  \eqref{main_PDE} with $p=2$ for every $\lambda<\lambda_1$.  However, the Pohozaev identity \cite[Corollary~1.5]{An} implies  \eqref{main_PDE} does not have any nontrivial solutions in any star-shaped domain when $p=2$ and $\lambda<(1-s)\lambda_{1,s}<\lambda_1$, where $\lambda_{1,s}$ is the first eigenvalue of $((-\Delta)^s,\Omega)$. Hence for $p=2$ and $\la>0$ small Theorem~\ref{Uniform_Theorem} fails when $\Om$ is star-shaped. In this direction, it may also be natural to raise the following question:
\begin{center}
\noi\textbf{Question: }\textit{Does the compactness result remain valid in general smooth domain} for the linear perturbation problem (i.e.,~$p=2$) when $\lambda>0$ is sufficiently large, and more generally for all $p>2$ and every $\lambda$?
\end{center}
\end{remark}

\begin{remark}\label{nontrivial_remark}
    If $\{u_n\}$ is a sequence of nontrivial solutions to \eqref{sub_pro}, uniformly bounded in $X_0(\Om)$, then the limit $u$, in Theorem~\ref{Uniform_Theorem} can not be trivial. To see this, we observe that, by the Sobolev inequality,
    \begin{align*}
        &\rho(u_n)^2=\la\|u_n\|_p^p+\|u_n\|_{p_n}^{p_n}\leq\la C\rho(u_n)^p+C\rho(u_n)^{p_n}\Longrightarrow1\leq \la C \rho(u_n)^{p-2}+C\rho(u_n)^{p_n-2}.
    \end{align*}
    If the limit $u\equiv0$ then $\rho(u_n)\to0$. Since $2<p<p_n$, we obtain a contradiction.
\end{remark}
\begin{remark}
    The assumptions on $N$ and $p$ arise naturally from the blow-up analysis. Whereas the assumption $s>\frac12$ in the case $N=3$ is technical in nature; it is specifically required to ensure that the local Talenti bubbles belong to the space $\dot{H}^s(\RR)=\ov{\CC_c^\infty(\RR)}^{[\cdot]_s}$.
\end{remark}
Due to the presence of the critical nonlinearity, the energy functional associated to \eqref{main_PDE}  fails to satisfy the Palais-Smale condition for every $c\in\R$. Consequently, the minimax theorem cannot be applied directly to obtain infinitely many solutions. As an application of Theorem \ref{Uniform_Theorem}, we also obtain the existence of infinitely many sign-changing solutions  as stated in the following theorem:
\begin{theorem}\label{Existence_Theorem}
Let  $N \ge 3$, $\la>0$ and $s \in (0,1)$. We assume that $s>\frac{1}{2}$ if $N=3$. Suppose \eqref{p_cond} holds. Then \eqref{main_PDE} admits infinitely many sign-changing solutions.
\end{theorem}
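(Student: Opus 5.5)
The plan follows the approach of Devillanova--Solimini and Cao--Peng--Yan: obtain sign-changing solutions of the subcritical approximations \eqref{sub_pro} with $p_n<2^*$, and then pass to the limit using Theorem~\ref{Uniform_Theorem}.

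\textbf{Step 1: critical values for the subcritical problems.} Fix $p$ satisfying \eqref{p_cond} and a sequence $p_n\uparrow 2^*$ with $p_n\in(p,2^*)$. The functional is
\[
I_n(u)=\tfrac12\rho(u)^2-\tfrac{\lambda}{p}\|u\|_p^p-\tfrac1{p_n}\|u\|_{p_n}^{p_n}.
\]
It is even, of class $C^1$ on $X_0(\Omega)$, and satisfies the Palais--Smale condition, because $p_n<2^*$ and the embedding is compact. I will use the sign-changing critical point theory based on invariant sets of descending flow (Liu--Sun, or Schechter--Zou). The cones of nonnegative and nonpositive functions are positively invariant for the flow generated by $u\mapsto u-(-\Delta+(-\Delta)^s)^{-1}(\lambda|u|^{p-2}u+|u|^{p_n-2}u)$; this holds because $-\Delta+(-\Delta)^s$ satisfies the weak maximum principle. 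With this, the symmetric mountain pass (fountain) construction over an increasing family of finite-dimensional subspaces $E_k$ produces, for every $k$, a sign-changing critical point $u_{n,k}$ of $I_n$ at a minimax level $c_{n,k}$.

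\textbf{Step 2: uniform bounds on the levels.} I need $\alpha_k\le c_{n,k}\le \beta_k$ with $\alpha_k,\beta_k$ independent of $n$ and $\alpha_k\to\infty$.
\begin{itemize}
\item Upper bound: on $E_k$ all norms are equivalent, and $\|u\|_{p_n}^{p_n}\ge C\|u\|_2^{p_n}$ uniformly for $p_n$ near $2^*$. Hence $I_n\to-\infty$ uniformly in $n$ on large spheres of $E_k$, which gives $\beta_k$.
\item Lower bound: on $E_{k-1}^\perp$, use the $\lambda$-term as in the genus/Bartsch argument. The quantity $\sup_{E_{k-1}^\perp\cap\{\rho=1\}}\|u\|_p$ tends to $0$ by compact embedding. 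The $p_n$-term is controlled via Sobolev by $C\rho^{p_n}$ with $C$ uniform. Choosing radii $r_k\to\infty$ suitably then gives $\alpha_k\to\infty$ independently of $n$.
\end{itemize}

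\textbf{Step 3: uniform bounds on the solutions.} Combine $I_n(u_{n,k})=c_{n,k}\le\beta_k$ with $I_n'(u_{n,k})u_{n,k}=0$:
\[
\Big(\tfrac12-\tfrac1p\Big)\rho(u_{n,k})^2\le I_n(u_{n,k})-\tfrac1p I_n'(u_{n,k})u_{n,k}\le \beta_k .
\]
So $\{u_{n,k}\}_n$ is bounded in $X_0(\Omega)$ for each fixed $k$.

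\textbf{Step 4: passage to the limit via Theorem~\ref{Uniform_Theorem}.} For fixed $k$, Theorem~\ref{Uniform_Theorem} gives $u_{n,k}\to u_k$ strongly in $X_0(\Omega)$, with $u_k$ a solution of \eqref{main_PDE}. Strong convergence passes the energy to the limit, $I(u_k)=\lim_n c_{n,k}\in[\alpha_k,\beta_k]$. Since $\alpha_k\to\infty$, infinitely many of the $u_k$ are distinct.

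\textbf{Step 5: the limits change sign.} I will show $u_k^\pm\ne0$. Test \eqref{sub_pro} with $u_{n,k}^\pm$. Since $(u^+(x)-u^+(y))(u(x)-u(y))\ge |u^+(x)-u^+(y)|^2$, this gives
\[
\rho(u_{n,k}^+)^2\le \lambda\|u_{n,k}^+\|_p^p+\|u_{n,k}^+\|_{p_n}^{p_n},
\]
and the same for the negative part. By the argument of Remark~\ref{nontrivial_remark}, $\rho(u^\pm_{n,k})\ge c>0$ uniformly in $n$. Strong convergence gives $u_{n,k}^\pm\to u_k^\pm$, so $\rho(u_k^\pm)\ge c$ and $u_k$ is sign-changing.

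\textbf{Main obstacle.} I expect the hardest part to be Step 1 together with Step 2. One must check that the invariant-set flow works for the nonlocal operator, which requires the comparison principle and the needed compactness of the relevant operator. One must also ensure that the minimax levels of sign-changing points are bounded below by $\alpha_k\to\infty$ uniformly in $n$. For the latter I would rely on the $\lambda$-term in the lower estimate to get constants independent of $p_n$.
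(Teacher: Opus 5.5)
Your Steps 1, 3 and 5 follow the paper: the $n$-independent upper bound is Lemma~\ref{Upper_bound}, and the sign-change argument is the same. The lower bound in Step 2, however, does not hold, and with it Step 4's conclusion that infinitely many $u_k$ are distinct. On $E_{k-1}^\perp\cap\{\rho(u)=r\}$ your estimate reads
\[
I_n(u)\ \ge\ \tfrac12 r^2-\tfrac{\lambda}{p}\beta_k^p\,r^p-\tfrac{C^{p_n}}{p_n}\,r^{p_n},\qquad \beta_k:=\sup_{u\in E_{k-1}^\perp,\ \rho(u)=1}\|u\|_p\to0,
\]
and only the middle term improves with $k$. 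The last term carries the Sobolev constant, which is uniform in $n$ but not small. Maximizing over $r$ therefore gives at most $(\tfrac12-\tfrac1{p_n})C^{-2p_n/(p_n-2)}$, independent of $k$, and letting $r_k\to\infty$ only makes things worse.

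What the fountain argument needs is $\sup_{E_{k-1}^\perp\cap\{\rho=1\}}\|u\|_{p_n}\to0$ uniformly in $n$. This holds for each fixed $p_n<2^*$ but fails as $p_n\to2^*$, and the failure is genuine. Projections onto $E_{k-1}^\perp$ of concentrating Talenti bubbles are almost Sobolev-extremal and have vanishing $[\cdot]_s$, so for every $k$ and every $r$
\[
\limsup_{n\to\infty}\ \inf_{E_{k-1}^\perp\cap\{\rho=r\}}I_n\ \le\ \tfrac12 r^2-\tfrac1{2^*}\mathcal{S}_N^{-\frac{2^*}{2}}r^{2^*}\ \le\ \tfrac1N\mathcal{S}_N^{\frac N2}.
\]
So no uniform $\alpha_k\to\infty$ can come from this mechanism. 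The $\lambda$-term cannot rescue it, since it is also subtracted, and this is exactly the loss of compactness Theorem~\ref{Uniform_Theorem} addresses. Without it, nothing in your argument prevents $\lim_n c_{n,k}$ from staying bounded, and you cannot conclude there are infinitely many $u_k$.

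The missing ingredient is information on $u_{n,k}$, other than its level, that survives the limit: a Morse index bound. The paper applies the Schechter--Zou theorem. It gives a sign-changing critical point $u^*_{n,k}$ at a level in $[-\Lambda_0,\,C^{**}_{k+1}(n,\lambda)]$ with augmented Morse index $m^*(u^*_{n,k})\ge k$. The hypotheses it needs are the invariance $K_{n,\lambda}(\pm\mathcal{D}(\mu_0))\subset\pm\mathcal{D}(\mu)$ of Lemma~\ref{lemma_A_1} and the Fredholm property of $I''_{n,\lambda}$.

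The paper then proves $C^*(\lambda,k)\to\infty$ by contradiction. If the levels stayed bounded, a diagonal sequence $u^*_{n_k,k}$ with $p_{n_k}\to2^*$ would be bounded. By Theorem~\ref{Uniform_Theorem} it would converge strongly to some $u_0$. The Courant--Fischer eigenvalues of $-\Delta+(-\Delta)^s-V_k$ would then converge to those of the linearization at $u_0$. This forces $m^*(u^*_{n_k,k})\le m^*(u_0)<\infty$ for large $k$, contradicting $m^*\ge k$. Note that the compactness theorem is used along a diagonal sequence in $(n,k)$, not only for fixed $k$ as in your Step 4.
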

\begin{remark} (a) A closer examination of \eqref{main_PDE} and \eqref{sub_pro} naturally leads to the following question: what happens if one perturbs the lower order term instead of the critical term? Let $$S_p:=\text{set of all weak solutions to \eqref{main_PDE} in $X_0(\Om)$.}$$ By Theorem \ref{Existence_Theorem}, when \eqref{p_cond} holds, $S_p$ has infinitely many elements for each $p$. Set 
\begin{align*}
    S:= \underset{p\in(p_0,2^*)}{\bigcup} S_p.
\end{align*}
Clearly, if $\{v_n\}\subseteq S_p$ for some fixed $p\in(p_0,2^*)$ is a uniformly bounded sequence in $X_0(\Om)$, then using Theorem \ref{Uniform_Theorem}, $\{v_n\}$ has a strongly convergent subsequence in $X_0(\Om)$. 

(b) Now consider the case where $v_n\in S_{p_n}$ for $p_n\in (p_0,2^*)$ and $\{v_n\}$ is a uniformly bounded sequence in $X_0(\Om)$. That is, for each $n \in \N$, $v_n$ weakly solves
\begin{equation}\label{lim_sup_pro}
\begin{cases}
    \del u+\fra u=\lambda |u|^{p_n-2}u+|u|^{2^*-2}u&\text{in }\Om,\\
    u=0&\text{in }\RR \setminus \Om.
\end{cases}
\end{equation}
 Adapting the arguments used in the proof of Theorem \ref{Uniform_Theorem}, we observe that (up to a subsequence) if $p_n\to \hat{p} \in[p_0, 2^*)$, then $\{v_n\}$ is relatively compact in $X_0(\Om)$.
\end{remark}
\subsection{Literature review}
Consider the purely local problem 
\begin{equation}\label{loc_BrNi2}
       \del u=\la |u|^{p-2}u+|u|^{2^*-2}u \; \text{ in }\Om,\quad u=0\;\text{ on }\pa\Om.
\end{equation}
The case $p=2$ is corresponds to the classical Brezis-Nirenberg problem, while $p>2$ gives a superlinear perturbation.

In the case $p=2$, Brezis and Nirenberg, in their seminal work \cite{BrNi}, showed that
\begin{enumerate}
    \item \eqref{loc_BrNi2} has no nontrivial solution on any star-shaped domains for $\la\leq0$;
    \item There exists $\la^*\in[0,\mu_1)$ such that \eqref{loc_BrNi2} has a positive solution for all $\la\in (\la^*,\mu_1)$, where $\mu_1$ is the first Dirichlet eigenvalue of $(\del,\Om)$. If $N\geq4$, $\la^*=0$ and if $N=3$ and $\Om$ is a ball then $\la^*=\frac{\mu_1}{4}$;
    \item \eqref{loc_BrNi2} does not admit any positive solution for $\la\geq\mu_1$.
\end{enumerate}
Since then, various multiplicity results have been obtained by many authors. One of the important contributions in this direction was made by Devillanova and Solimini. In \cite{DeSo}, they proved the following: let $N\geq7$, $\la>0$ and $U$ be a bounded set in $H_0^1(\Om)$ whose elements are weak solutions to
\begin{equation}\label{loc_BrNi1}
        \del u=\la u+|u|^{q-2}u \; \text{ in }\Om,\quad u=0\;\text{ on }\pa\Om,
\end{equation}
for $q$ varying in $[2,2^*]$. Then $$\sup_{u\in U}\sup_{x\in\Om}|u(x)|\leq C.$$
This in turn implies a compactness result similar to Theorem \ref{Uniform_Theorem}. As an application of the compactness result, using standard minimization arguments, they proved the existence of infinitely many nontrivial solutions to \eqref{loc_BrNi2}
for any $\la>0$ provided $N\geq7$. This is an important contribution because the existence of infinitely many nontrivial solutions to \eqref{loc_BrNi2} was known only when $\Omega$ is a symmetric domain. For instance, in \cite{FoJa}, Fortunato and Jannelli proved the existence of infinitely many non-trivial solutions of \eqref{loc_BrNi2} for $N\geq4$, $\la>0$ and $\Om$ satisfying some symmetry conditions. However, the result of Devillanova and Solimini does not require any symmetry assumption on $\Omega$.

Observe that, the restriction $N\geq7$ in the compactness theorem of Devillanova and Solimini can not be removed for $\la>0$ small. To see this, note that when $\Om$ is a ball, minimizing over the space of radial functions, compactness theorem implies the existence of infinitely many radial solutions to \eqref{loc_BrNi2} for any $\la>0$. For $N\geq3$ and $\la>0$, Srikanth \cite{Sri} proved that \eqref{loc_BrNi2} admits at most one positive solution when $\Om$ is a ball. Hence, \eqref{loc_BrNi2} has infinitely many radial sign-changing solutions. However, for $4\leq N \leq 6$ and $\Om$ is a ball,  Atkinson, Brezis and Peletier in \cite{AtBrePe} showed that \eqref{loc_BrNi2} does not admit any radial sign-changing solution if $\la>0$ is small enough. Hence, $N\geq 7$ is crucial in the theorem of Devillanova and Solimini.

Schechter and Zou in \cite{ScZo} proved an abstract critical point theorem and using it along with the compactness theorem of Devillanova and Solimini, they established the existence of infinitely many sign-changing
solutions of \eqref{loc_BrNi2} for all $\la>0$ when $N\geq7$.

Next, for the superlinear perturbation case (i.e. $p>2$), Brezis and Nirenberg in \cite{BrNi} also showed that 
\begin{enumerate}
    \item [(4)]If $N\geq4$ or $N=3$ and $p\in(4,6)$, \eqref{loc_BrNi2} has a positive solution for every $\la>0$.
    \item[(5)] If $N=3$ and $p\in(2,4]$ , \eqref{loc_BrNi2} has a positive solution for $\la$ large enough.
\end{enumerate}
Servadei and Valdinoci in \cite{SeVa, SeVa1} studied the following fractional analog of \eqref{loc_BrNi2}, namely
\begin{equation}\label{nonloc_BrNi1}
    \fra u=\la |u|^{p-2}u+|u|^{2_s^*-2}u\;\text{ in } \Om,\quad u=0\; \text{ in }\RR\setminus\Om,
\end{equation}
where $2_s^*=\frac{2N}{N-2s}$ is the nonlocal critical Sobolev exponent.\\
In the case $p=2$, they proved that
\begin{enumerate}
    \item for $N\geq4s$ and for any $\la\in(0,\la_{1,s})$, \eqref{nonloc_BrNi1} admits a positive solution;
    \item for $2s< N<4s$ there exists $\la_s>0$ such that for any $\la>\la_s$, \eqref{nonloc_BrNi1} has a nontrivial solution.
\end{enumerate}

When $p\in(2,2_s^*)$,  Barrios, Colorado, Servadei and Soria in \cite{BaCoSeSo} showed that
\begin{enumerate}
    \item [(3)] for $N>\frac{2s(p+2)}{p}$ and $\la>0$ or $N<\frac{2s(p+2)}{p}$ and $\la>0$ sufficiently large, \eqref{nonloc_BrNi1} admits a positive solution.
\end{enumerate}
Following the arguments of \cite{DeSo} and using the Caf{}farelli-Silvestre extension method, Yan, Yang and Yu \cite{YaYaYu} demonstrated a compactness result, comparable to Theorem \ref{Uniform_Theorem}, for the \textit{spectral Laplace} counterpart of \eqref{nonloc_BrNi1} when $N>6s$ . Building on the compactness result of \cite{YaYaYu}, Li, Su and Tersian \cite{LiSuTe} leveraged those findings to prove the existence of infinitely many sign-changing solutions. However, to our knowledge, it remains an open question whether this compactness result holds for the case $N \leq 6s$.

Following the arguments of Devillanova and Solimini \cite{DeSo}, similar compactness results have been established for other elliptic operators, such as the $p$-Laplace operator \cite{CaPeYa}, the Kohn Laplace operator \cite{ChFaLi}, and other quasilinear operators (see, e.g., \cite{GaGu}).  To the best of our knowledge, this is the first paper to address this type of compactness result for a non-homogeneous operator. 
After submitting our work, we became aware of a parallel work by Carletti and Premoselli \cite{CaPr}, in which the authors also prove a compactness result for critical equations involving mixed higher-order elliptic operators, using a different approach.

Finally, we present some known results for mixed local-nonlocal Brezis-Nirenberg problems. Biagi, Dipierro, Valdinoci and Vecchi \cite{BiDiVaVe} recently proved that\\
when $p=2$,
\begin{enumerate}
    \item for $\la\leq0$, \eqref{main_PDE} has no nontrivial solution;
    \item there exists $\la^*\in[\la_{1,s},\la_1)$ such that for any $\la\in(\la^*,\la_1)$, \eqref{main_PDE} has a positive solution; 
    \item for $0<\la\leq \la_{1,s}$, \eqref{main_PDE} has no positive solution belonging to a certain ball in $L^{2^*}(\RR)$;
    \item for $\la\geq\la_1$, \eqref{main_PDE} does not admit any positive solution;
\end{enumerate}
and when $p \in (2,2^*)$, set 
\begin{equation}\label{Biagi_cond}
    \kappa_{s,N}:=\min\{2-2s,N-2\},\quad\beta_{p,N}:=N-\frac{p(N-2)}{2},
\end{equation}
\begin{enumerate}
    \item [(5)] if $\kappa_{s,N}>\beta_{p,N}$, then \eqref{main_PDE} admits a positive solution for every $\la>0$;
    \item [(6)] if $\kappa_{s,N}\leq\beta_{p,N}$, then \eqref{main_PDE} admits a positive solution for $\la$ large enough.
\end{enumerate}
Regarding the multiplicity of nontrivial solutions to the mixed local-nonlocal problems, da Silva, Fiscella and Viloria in \cite{DaFiVi} showed that 
\begin{enumerate}[(1)]
    \item for $p=2$, \eqref{main_PDE} admits $m$ pairs of nontrivial solutions when $\la>\la_{k+1}-\SS_N|\Om|^{-\frac2N}$ and $\la_{k}\leq \la<\la_{k+1}=\cdots=\la_{k+m}<\la_{k+m+1}$, where $\SS_N$ is the best constant in the Sobolev inequality \eqref{critical};
    \item for $p>2$, there exists $\la_{**}>0$ such that for any $\la\in(0,\la_{**})$, \eqref{main_PDE} admits at least $\text{cat}_{\Om}(\Om)$ many nontrivial solutions. They employed Ljusternik-Schnirelmann category theory to establish this result. 
\end{enumerate}

\begin{remark}
    Observe that when $N>6-4s$, the condition $p>2+\frac{4s}{N-2}$, in the hypothesis of our Theorem \ref{Existence_Theorem}, is equivalent to $\kappa_{s,N}>\beta_{p,N}$.
\end{remark}

\subsection{Difficulties and Strategies}\label{difficulties} To prove Theorem~\ref{Uniform_Theorem}, we follow the strategy by Devillanova and Solimini in \cite{DeSo}.  We brief{}ly outline the main idea. Observe that $\{u_n\}$ is a Palais-Smale sequence of $I_\la$. Thus, using the Palais-Smale decomposition of the energy functional associated with \eqref{main_PDE}, (see Proposition \ref{PS}, also see \cite{ChGuMaSr} for $p=2$) $u_n$ can be decomposed as the sum of the weak limit of $u_n$ and finitely many local Talenti bubbles. To establish strong convergence, it remains to prove that no such bubbles arise in the decomposition.

For the classical Brezis-Nirenberg problem \eqref{loc_BrNi2} with $p=2$, Devillanova and Solimini considered a \textit{safe region} suitably away from any concentration points of the Talenti bubbles. In that \textit{safe region}, they showed boundedness and some gradient estimates of $u_n$. Using the {\it local} Pohozaev's identity on a ball centred at a concentration point, with the boundary lying inside the safe region, they obtain control over the $L^2$ mass of $u_n$ in the whole ball via boundary integrals inside the safe region. Exploiting the estimates in the safe region yields an upper bound, while the Palais-Smale decomposition together with the asymptotic behaviour of the bubbles provides a corresponding lower bound. A comparison of these bounds ultimately shows that no bubbles can occur when $N\geq7$.

Next, we discuss some key steps and dif{}ficulties for proving Theorem \ref{Uniform_Theorem}:
\begin{enumerate}
    \item To get estimates on $u_n$, Devillanova and Solimini used the following identity 
    \begin{equation}\label{form_1}
        \frac{\d}{\dt}\left(\frac{1}{t^{N-1}}\int_{\pa B_t^N(x_0)}u(y)\d S\right)+\left(\frac1{t^{N-1}}\int_{B_t^N(x_0)}(\del u(y))\dy\right)=0,
    \end{equation}
where $B_t^N(x_0)\subset\Om$. Similarly, for \eqref{nonloc_BrNi1}, Yan et al. employed a related identity based on Caf{}farelli-Silvestre's extension of $u$. Inspired by \cite[Proposition C.1]{CaPeYa}, 
we obtain that $u_n$ satisfies the following expression through the application of the weak Harnack inequality \cite{GaKi} and the Wolf{}f potential estimates \cite{BySo}:
\begin{equation}\label{form_2}
    \left(\frac1{r^N}\int_{B_r^N(x_0)}|u|^\gamma\dx\right)^{\frac1{\gamma}}\leq C+C\int_r^R\left(\frac1{\rho^{N-1}}\int_{B_\rho^N(x_0)}|(\del u+\fra u)|\dx\right)\d\rho,
\end{equation}
for any $\gamma\in[1,\frac{N}{N-2})$.
\item After establishing $L^\gamma$ estimate for some $\gamma$, \cite{CaPeYa,YaYaYu} used the Moser iteration to get the local boundedness. However, due to the presence of a nonlocal term, Moser iteration for the mixed local-nonlocal problem introduces a tail term (see \cite{GaKi}). We observe that, due to the presence of the tail term, the method of Devillanova and Solimini can not be applied directly.

\item To the best of our knowledge, the {\it local} Pohozaev's identity is not known for the fractional Laplacian operator and also for the mixed operator. 

\item In view of the above two dif{}ficulties, following \cite{YaYaYu}, we consider Caf{}farelli-Silvestre's (CS) extension for \eqref{main_PDE}. Although a Moser iteration scheme can still be implemented in the CS extension set-up, the nonhomogeneous nature of the operator prevents a direct application of the techniques used in \cite{CaPeYa, YaYaYu} to obtain the desired $L^\gamma$ estimates. We also mention that the use of CS extension has recently been applied in the context of mixed local-nonlocal operators, see \cite{Go}.
\item \label{loc_poh_diff} In Theorem \ref{Uniform_Theorem}, we prove a {\it local} Pohozaev's identity for the CS extension of \eqref{main_PDE}. In this case, the {\it local} Pohozaev's identity yields (see Subsection \ref{subsec_proof_uniform}):
$$\la\int_{B^N_{\si_n^{-\frac12}}(x_n)}|u_n|^p\dx\leq \text{Boundary integrals}+C\int_{B^{N+1}_{\si_n^{-\frac12}}(x_n,0)\cap\{y>0\}}y^{1-2s}|\nabla U_n|^2\dx\dy,$$
where $U_n$ is the Caf{}farelli-Silvestre extension of $u_n$, and $\si_n, \,(x_n,0)$ are respectively the blow-up rate and the concentration point of the slowest concentrating bubble in the (PS) decomposition of $\{U_n\}$ (see the Subsection \ref{safe_reg_subsec}). Using the estimates obtained on the safe region, we have $\text{Boundary integrals}\leq C\si_n^{1-\frac N2}$. Also, using the bubble behaviour, we get
\begin{align*}
    & \la\int_{B^N_{\si_n^{-\frac12}}(x_n)}|u_n|^p\dx\geq C\la\si_n^{\frac{(N-2)p}{2}-N}, \quad\text{ and } \\
    &\int_{B^{N+1}_{\si_n^{-\frac12}}(x_n,0)\cap\{y>0\}}y^{1-2s}|\nabla U_n|^2\dx\dy \sim \si_n^{2s-2}.
\end{align*}
This implies
$$\si_n^{\frac{N-2}{2}p-N}\leq C\si_n^{2s-2}+C\si_n^{1-\frac{N}{2}}\leq\begin{cases}
    C\si_n^{2s-2},&\text{ if }N>6-4s;\\
    C\si_n^{1-\frac{N}{2}},&\text{ if }N\leq6-4s.
\end{cases}$$
We observe that the above inequality, with the fact that $\{\sigma_n\}$ is unbounded, implies 
\begin{equation*}
\begin{cases}
    \frac{N-2}{2}p-N\leq 2s-2\Longrightarrow p\leq 2+\frac{4s}{N-2},&\text{ if }N>6-4s;\\
    \frac{N-2}{2}p-N\leq1-\frac{N}{2}\implies p\leq 2^*-1,&\text{ if }N\leq6-4s.
\end{cases}
\end{equation*}
Therefore, in the ranges mentioned in the hypothesis of Theorem \ref{Uniform_Theorem}, no bubble can appear in the decomposition. We emphasize that this is the only step where the restriction on $p$ arises.
\end{enumerate}
\begin{remark}\label{diffculties1}
    As discussed in \eqref{loc_poh_diff} above, the condition on $p$ appears because of the method we follow. In this method, the following quantity  $$\int_{B^{N+1}_{\si_n^{-\frac12}}(x_n,0)\cap\{y>0\}}y^{1-2s}|\nabla U_n|^2\dx\dy$$ in the {\it local} Pohozaev's identity is hindering us to get the compactness result till $p=2$. This term emerges from the non-homogeneous nature of the mixed operator rather than from the nonlinearity. This suggests that one may {\it possibly} encounter a similar situation in other mixed local-nonlocal problems when employing this method. 
\end{remark}

As mentioned before, a key step in proving the existence of infinitely many solutions to \eqref{main_PDE} is the analysis of the \textit{compactness} established in Theorem \ref{Uniform_Theorem}. The same strategy has been implemented to obtain the existence of infinitely many solutions for different elliptic operators (see e.g. \cite{CaPeYa, DeSo, YaYaYu}). To prove the existence of infinitely many sign changing solutions, we apply the methods developed by Schechter and Zou \cite{ScZo}.

\subsection{Outline and Notation} The paper is organized as follows: Section \ref{prelim} describes the Caf{}farelli-Silvestre's (CS) extension for the mixed local nonlocal operator, and we establish an upper estimate on the normal derivative of the CS extension of the solution associated with the mixed local nonlocal operator (see Lemma~\ref{convergence}). Section \ref{compact} establishes a (PS) decomposition for the CS extension of \eqref{main_PDE} and provides several key estimates over the safe regions to rule out bubbles in the decomposition. This section concludes with the proof of the compactness theorem. In Section \ref{infinite}, we utilize these results to prove the existence of infinitely many sign-changing solutions to \eqref{main_PDE}. Appendix \ref{A} contains some technical lemmas. 

We use the following notation and convention: 
\begin{enumerate}[(i)]
    \item We denote $\d\mu := \frac{\dx\dy}{|x-y|^{N+2s}}.$
    \item We denote $$\A(u,v) := \iint_{\R^{2N}}\frac{(u(x)-u(y))(v(x)-v(y))}{|x-y|^{N+2s}}\dx\dy.$$
    \item $B_r^N$ is a ball centered at $x_0\in\RR$ and radius $r$ in $\RR$. Every point in $\R^{N+1}$ is written as $(x,y)$, where $x\in\RR$ and $y\in\R$. We further denote $$\R_+^{N+1}:=\R^{N+1}\cap\{y>0\},\, B_r^+(x_0,0):=B_r^{N+1}(x_0,0)\cap\{y>0\}.$$
    \item For any open set $U\subset\RR$, we denote $$X_0(U):=\left\{u\in H^1(\RR):u|_{U}\in H_0^1(U),\, u=0\text{ a.e. in }\RR\setminus U\right\}.$$
    \item We define the space $\DD^{1,2}(\R_+^{N+1},y^{1-2s})$ as 
\begin{align*}
    \DD^{1,2}(\R_+^{N+1},y^{1-2s}) := \overline{\CC_c^{\infty}(\R_+^{N+1})}^{\|\cdot\|_{\DD^{1,2}(\R_+^{N+1},y^{1-2s})}}, 
\end{align*}
where $$\|\cdot\|_{\DD^{1,2}(\R_+^{N+1},y^{1-2s})}:=\left(\int_{\R_+^{N+1}}y^{1-2s}\abs{\nabla \cdot}^2\dx\dy\right)^{\frac12}.$$
\item For $U\in \DD^{1,2}(\R_+^{N+1},y^{1-2s})$, we denote the trace of $U$ on the boundary $\R^N\times\{y=0\}$ by $\text{Tr}(U)$.
     \item We denote $$\XX^s(\R_+^{N+1}):=\left\{U\in \DD^{1,2}(\R_+^{N+1},y^{1-2s}):\text{Tr}(U)\in \DD^{1,2}(\RR)\right\}.$$
    \item We also denote $$\XX_{\Om}^s(\R_+^{N+1}):=\left\{U\in \DD^{1,2}(\R_+^{N+1},y^{1-2s}):\text{Tr}(U)\in X_0(\Om)\right\}.$$
    \item The Tail term is denoted as $$\text{Tail}(u;x_0,R):=R^2\int_{\RR\setminus B_R^N(x_0)}\frac{|u(x)|}{|x-x_0|^{N+2s}}\dx.$$
    \item The excess functional is defined as $$E(u;x_0,R):=\fint_{B_R^N(x_0)}|u-(u)_{B_R^N(x_0)}|\dx+\text{Tail}(u-(u)_{B_R^N(x_0)};x_0,R).$$ 
    \item $\norm{\cdot}_p := \norm{\cdot}_{L^p(\RR)}$  for $p \in (0, \infty)$.
    \item $\d\HH^k$ denotes the $k$-dimensional Hausdorf{}f measure.
    \item $C$ denotes a generic positive constant. 
\end{enumerate}

\section{Preliminary}\label{prelim}

We begin with Caf{}farelli-Silvestre's extension \cite{CaSi}.
\subsection{Caf{}farelli-Silvestre's extension}  Let $u\in \dot{H}^s(\RR)$ and $E_s(u):=U\in \DD^{1,2}(\R_+^{N+1},y^{1-2s})$ be the $s$-harmonic extension of $u$ to the upper half space $\R_+^{N+1}$, i.e. $U$ weakly solves
\begin{equation*}
    \begin{cases}
        \text{div}(y^{1-2s}\nabla U)=0\text{ in }\R_+^{N+1},\\
        U(x,0)=u(x)\text{ in }\RR.
    \end{cases}
\end{equation*}
Then we have the following properties:
\begin{enumerate}[(a)]
    \item $U$ can be expressed explicitly by means of the Poisson kernel. For $x\in\RR,y>0$ ,
    \begin{equation}\label{Poisson_formula}
        U(x,y)=\int_{\RR}P(x-\xi,y)u(\xi)\d\xi,
        \end{equation}
    where the Poisson kernel $P(x,y)$ is defined as
    \begin{equation}\label{Poisson_kernel}
        P(x,y)=C_{N,s}\frac{y^{2s}}{(|x|^2+y^2)^{\frac{N+2s}{2}}}, \quad x\in\RR,\, y>0.
    \end{equation}
    \item The extension is related to the fractional Laplacian in the following way
    \begin{equation}\label{DtoN}
        -C_{N,s}\lim_{y\to0^+}y^{1-2s}\frac{\pa U}{\pa y}(x,y)=\fra u(x),\quad x\in\RR.
    \end{equation}
    \item Due to the relation
    \begin{equation}\label{isometry}
        [u]_s^2=C_{N,s}\int_{\R_+^{N+1}}y^{1-2s}|\nabla U|^2\dx\dy,
    \end{equation}
    the extension map $E_s:\dot{H}^s(\RR)\to \DD^{1,2}(\R_+^{N+1},y^{1-2s})$ sending $u$ to its $s$-extension $U$ is an isometry.
    \item Trace inequality:
    \begin{equation}\label{trace_ineq}
        \|u\|_{L^{2^*}(\RR)}\leq C\|U\|_{\DD^{1,2}(\R_+^{N+1},y^{1-2s})}.
    \end{equation}
    \item Let $U\in \DD^{1,2}(\R_+^{N+1},y^{1-2s})$ and $B_r^+(x,0)\subset\R_+^{N+1}$. Then (see \cite[Lemma 2.1]{TaXi}) there exist $C(r),\,\de=\de(N,s)>0$ such that for any $1\leq t\leq\frac{N+1}{N}+\de$,
    \begin{equation}\label{Tan-Xiong}
        \|U\|_{L^{2t}(B_r^+(x,0),y^{1-2s})}\leq C(r)\|\nabla U\|_{L^2(B_r^+(x,0),y^{1-2s})}.
    \end{equation}
\end{enumerate}
We define the Hilbert space $$\XX_{\Om}^s(\R_+^{N+1}):=\left\{U\in \DD^{1,2}(\R_+^{N+1},y^{1-2s}):\text{Tr}(U)\in X_0(\Om)\right\},$$ equipped with the norm $$\tilde{\rho}(U):=\left(\int_{\Om}|\nabla_x U(x,0)|^2\dx+C_{N,s} \int_{\R_+^{N+1}}y^{1-2s}|\nabla U|^2\dx\dy\right)^{\frac{1}{2}},\quad U\in \XX_{\Om}^s(\R_+^{N+1}).$$
We also define the Hilbert space $\XX^s(\R_+^{N+1})$ as 
$$\XX^s(\R_+^{N+1}):=\left\{U\in \DD^{1,2}(\R_+^{N+1},y^{1-2s}):\text{Tr}(U)\in \DD^{1,2}(\RR)\right\},$$ equipped with the norm $$\|U\|_{\XX^s(\R_+^{N+1})}^2:=\|U\|_{\DD^{1,2}(\R_+^{N+1},y^{1-2s})}^2+\|\text{Tr}(U)\|_{\DD^{1,2}(\RR)}^2.$$ 
Using the isometry of the extension map, we have $$\rho(\text{Tr}(U))=\tilde{\rho}(U),\quad U\in \XX_{\Om}^s(\R_+^{N+1}).$$
Let $u_n\in X_0(\Om)$ weakly solve \eqref{sub_pro}. Then its Caf{}farelli-Silvestre's extension $U_n\in \XX_{\Om}^s(\R_+^{N+1})$ satisfies
\begin{equation}\label{ext_sub_pro}
    \begin{cases}
        \text{div}(y^{1-2s}\nabla U_n)=0\text{ in }\R_+^{N+1},\\
        U_n(x,0)=u_n(x)\text{ in }\RR,\\
        \del u_n(x)- C_{N,s} \lim_{y\to0^+}y^{1-2s}\frac{\pa U_n}{\pa y}(x,y)=\la |u_n|^{p-2}u_n+|u_n|^{p_n-2}u_n.
    \end{cases}
\end{equation}
The weak formulation of \eqref{ext_sub_pro} is 
\begin{align}
    \int_{\Om}\nabla u_n(x)\cdot\nabla_x\phi(x,0)\dx & + C_{N,s} \int_{\R_+^{N+1}}y^{1-2s}\nabla U_n(x,y)\cdot\nabla \phi(x,y)\dx\dy \no \\
    &=\int_{\Om}(\la |u_n|^{p-2}u_n+|u_n|^{p_n-2}u_n)\phi(x,0)\dx,\; \forall \, \phi\in \XX_{\Om}^s(\R_+^{N+1}).
\end{align}
We consider the energy functional 
\begin{equation}\label{ext_func}
\tilde{I}_\la(U)=\frac12\tilde{\rho}(U)^2-\frac\la p\int_{\Om}|U(x,0)|^p\dx-\frac1{2^*}\int_{\Om}|U(x,0)|^{2^*}\dx,\quad U\in \XX_{\Om}^s(\R_+^{N+1}).
\end{equation}
Critical points of $\tilde{I}_\la$ weakly solve the extended problem associated with \eqref{main_PDE}, i.e.
\begin{equation}\label{ext_main_pro}
    \begin{cases}
        \text{div}(y^{1-2s}\nabla U)=0\text{ in }\R_+^{N+1},\\
        U(x,0)=u(x)\text{ in }\RR,\\
        \del u(x)- C_{N,s} \lim_{y\to0^+}y^{1-2s}\frac{\pa U}{\pa y}(x,y)=\la |u|^{p-2}u+|u|^{2^*-2}u\text{ in }\Om.
    \end{cases}
\end{equation}
\begin{remark}
    For the sake of brevity, henceforth we will omit the normalization constant $C_{N,s}$.
\end{remark}
In view of the above discussion, the statement of Theorem \ref{Uniform_Theorem} can be formulated as follows.

\begin{theorem}\label{Uniform_Theorem_CK}
Let  $N \ge 3$, $\la>0$ and $s \in (0,1)$. For $N=3$, we assume that $s>\frac{1}{2}$. Suppose \eqref{p_cond} holds. Let $\{U_n\}$ be a bounded set in $\XX_{\Om}^s(\R_+^{N+1})$ such that for each $n \in \N$, $U_n$ weakly solves the  problem \eqref{ext_sub_pro}, where $p<p_n\leq 2^*,\, p_n\to2^*$ as $n \ra \infty$. Then, up to a subsequence, $U_n \ra U$ in $\XX_{\Om}^s(\R_+^{N+1})$ and $U$ weakly solves the critical problem \eqref{ext_main_pro}.
\end{theorem}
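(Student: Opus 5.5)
We follow the Devillanova--Solimini strategy in the Caffarelli--Silvestre extended setting, as sketched in Subsection~\ref{difficulties}.

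\textbf{Step 1: profile decomposition.} Since $\{U_n\}$ is bounded in $\XX_{\Om}^s(\R_+^{N+1})$ and $p_n\to2^*$, the traces $u_n$ form a Palais--Smale-type sequence for the limiting functional $\tilde I_\la$. The reason is that $\|u_n\|_{p_n}^{p_n}-\|u_n\|_{2^*}^{2^*}\to0$ and the derivatives differ by $o(1)$ in the dual space, by H\"older and the boundedness of $u_n$ in $L^{2^*}$. Passing to a subsequence, $U_n\rightharpoonup U$, and $U$ weakly solves \eqref{ext_main_pro}; this follows from compact embeddings for the subcritical term and weak convergence of $|u_n|^{p_n-2}u_n$ in $L^{(2^*)'}$.

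We then invoke the (PS) decomposition of Proposition~\ref{PS}, which we expect to be adapted to the variable exponent $p_n$. It gives
\[
u_n=u+\sum_{j=1}^k \si_{n,j}^{\frac{N-2}{2}}w_j\bigl(\si_{n,j}(\cdot-x_{n,j})\bigr)+o(1)\quad\text{in } X_0(\Om),
\]
where the $w_j$ are Talenti bubbles of the \emph{local} Laplacian and $\si_{n,j}\to\infty$. The fractional part is lower order under this scaling, and it costs energy of order $\si_{n,j}^{2s-2}$. Proving the theorem therefore reduces to showing $k=0$. We argue by contradiction, assuming $k\ge1$, and single out the slowest-concentrating bubble, with rate $\si_n$ and centre $x_n$.

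\textbf{Step 2: safe region estimates.} Following \cite{DeSo,CaPeYa,YaYaYu}, we pick annular regions $A_n=\{\si_n^{-1/2}/C\le |x-x_n|\le C\si_n^{-1/2}\}$ and their extended counterparts in $\R_+^{N+1}$. These are chosen, after a pigeonhole over finitely many dyadic shells, to avoid all concentration points.

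On $A_n$ we want the pointwise bound $|u_n|\le C\si_n^{\frac{N-2}{4}}$, together with corresponding bounds for $\nabla u_n$ and for the weighted gradient $y^{1-2s}|\nabla U_n|$. The route is as follows.
\begin{itemize}
\item[(i)] Derive the integral inequality \eqref{form_2} via the weak Harnack inequality \cite{GaKi} and Wolff potential bounds \cite{BySo}. This iterates the smallness of $\int|f_n|$ on balls, where $f_n=\la|u_n|^{p-2}u_n+|u_n|^{p_n-2}u_n$, to get $L^\gamma$ bounds on shrinking shells.
\item[(ii)] Upgrade to $L^\infty$ by Moser iteration in the extended problem. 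Working on the extension avoids the tail term, and the mixed boundary condition is handled with the estimate on the normal derivative from Lemma~\ref{convergence} together with \eqref{Tan-Xiong}.
\item[(iii)] Bootstrap to gradient estimates by rescaling to unit size and applying interior and boundary regularity.
\end{itemize}

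\textbf{Step 3: local Pohozaev identity and conclusion.} Test the extended equation with $(x-x_n,y)\cdot\nabla U_n$ on $B^+_{r}(x_n,0)$, and test the trace equation with $(x-x_n)\cdot\nabla u_n$ on $B_r^N(x_n)$, where $r\sim\si_n^{-1/2}$ is selected inside the safe region. The critical and gradient terms cancel up to boundary integrals. What remains is
\[
c\la\int_{B_r^N}|u_n|^p \le \text{(boundary terms)} + C\int_{B_r^+}y^{1-2s}|\nabla U_n|^2 ,
\]
with a positive coefficient $c=\frac{N}{p}-\frac{N-2}{2}$ and a further term coming from $p_n<2^*$ that has a favourable sign. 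The Step~2 estimates give boundary terms of order $\le C\si_n^{1-\frac N2}$. For the remaining two quantities:
\begin{itemize}
\item The bubble profile gives $\int_{B_r}|u_n|^p\ge C\si_n^{\frac{(N-2)p}{2}-N}$.
\item The weighted energy is $\lesssim\si_n^{2s-2}$. This follows from the isometry \eqref{isometry}, scaling of the $\dot H^s$ norm of the bubble, and the contribution of $u$ together with the slower bubbles.
\end{itemize}
Comparing exponents contradicts \eqref{p_cond} as $\si_n\to\infty$, so $k=0$. Then the decomposition gives $u_n\to u$ strongly, hence $U_n\to U$ in $\XX_\Om^s(\R_+^{N+1})$.

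The main obstacle is Step~2. Obtaining uniform $L^\infty$ and gradient bounds in the safe region for the \emph{non-homogeneous} mixed operator requires controlling the coupling between the local boundary operator and the extension's Neumann data under rescalings, which do not preserve the equation. We would treat the fractional part as a perturbation of size $\si_n^{2s-2}$ after rescaling.
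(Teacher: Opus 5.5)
Your Steps 1 and 3 follow the paper's outline. Step 2, which carries the whole argument, has a genuine gap.

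\textbf{The target bound in Step 2 is too weak.} The bound $|u_n|\le C\si_n^{\frac{N-2}{4}}$ on the safe region is exactly what boundedness in $L^{2^*}$ gives by scaling on a ball of radius $\si_n^{-1/2}$. With only this bound, the quantities $\int|u_n|^{2^*}$, $\si_n\int u_n^2$ and (by Caccioppoli or rescaled elliptic estimates) $\int|\nabla u_n|^2$ over the shell are all merely $O(1)$. The boundary terms are then $O(1)$ rather than $O(\si_n^{1-\frac N2})$. The resulting inequality $\si_n^{\frac{N-2}{2}p-N}\le C\si_n^{2s-2}+C$ gives no contradiction, because its left side tends to $0$. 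What is actually needed is that $u_n$ is of order one there, namely $\int_{\pa\AA_n^2\cap\{y=0\}}|u_n|^q\le C\si_n^{-\frac N2}$ for all $q$, and $\int_{\pa\AA_n^3\cap\{y=0\}}|\nabla u_n|^2+\int_{\AA_n^3}y^{1-2s}|\nabla U_n|^2\le C\si_n^{1-\frac N2}$ (Propositions \ref{integ_prop} and \ref{gradient_prop}).

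\textbf{Your mechanism for (i) does not deliver this.} In \eqref{form_2} you must bound $\int_r^R\rho^{1-N}\int_{B_\rho}|u_n|^{2^*-1}\,d\rho$ uniformly for $r\ge\ov{C}\si_n^{-1/2}$. The bubbles are harmless: they contribute at most $C\si_n^{\frac{2-N}{2}}r^{2-N}\le C$. The remainder $u_n-u_\infty-\sum_i\Psi_n^i$, however, is only $o(1)$ in $L^{2^*}$ with no rate. H\"older then yields $o(1)\,r^{1-\frac N2}=o(1)\si_n^{\frac{N-2}{4}}$, which is the trivial bound again.

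\textbf{The missing idea is the Devillanova--Solimini auxiliary norm} $\|\cdot\|_{p_1,p_2,\si_n}$ of Definition \ref{def-1}. The argument runs as follows.
\begin{enumerate}
\item Dominate $|u_n|$ by the solution $v_n$ of $-\Delta v+(-\Delta)^sv=A|u_n|^{2^*-1}+A$ in a larger domain $\tilde\Om$.
\item Split the potential $|u_n|^{2^*-2}$ into the $u_\infty$-part, the bubble part, and the remainder. The remainder is absorbed thanks to its small $L^{N/2}$ norm (Lemma \ref{lemma_11}).
\item Bootstrap (Lemmas \ref{lemma_12}, \ref{lemma_13} and Proposition \ref{new_norm}) to get $|u_n|\le u_{1,n}+u_{2,n}$ with $\|u_{1,n}\|_{q_1}\le C$ and $\|u_{2,n}\|_{q_2}\le C\si_n^{\frac N{2^*}-\frac N{q_2}}$.
\end{enumerate}
This is exactly the rate that keeps the Wolff-type integral bounded down to scale $\si_n^{-1/2}$ (Proposition \ref{estimate-1}). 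Only then does the Moser step give bounds of order one. That step is applied to $w_n=|u_n|(\si_n^{-1/2}\cdot)$ (Lemma \ref{Moser_iteration}), with the factor $\theta=\si_n^{s-1}$ on the nonlocal part. The potential is small in $L^{N/2}$ because the rescaled ball contains no concentration point.

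Lemma \ref{convergence} plays no role in this step. Its bound is not uniform in $n$, and the paper uses it only, for fixed $n$, to justify the limit $\eps\to0$ in the Pohozaev identity.

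\textbf{A smaller gap is in Step 3.} Centring the Pohozaev identity at $x_n\in\Om$ leaves the term $\frac12\int_{B_r^N\cap\pa\Om}|\nabla u_n|^2\phi_n\,(x-x_n)\cdot\nu\,\d\HH^{N-1}$ on the right-hand side. This integral lies inside the concentration region, so it is uncontrolled. It is also nonnegative whenever $x_n$ is within $O(\si_n^{-1/2})$ of $\pa\Om$, which the decomposition does not exclude. The paper instead takes the centre $X_0=(x_0,0)$ with $x_0\in\Om^c$, chosen so that $|x-x_0|\le C\si_n^{-1/2}$ on $B_n^N$ and $(x-x_0)\cdot\nu\le0$ on $B_n^N\cap\pa\Om$. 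With this choice the term can be dropped.
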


\subsection{Normal derivative of Caffarelli-Silvestre's extension}
The following lemma plays a crucial role in the Pohozaev's identity. 

\begin{lemma}\label{convergence}
Let $u \in \mathcal{C}^2(\Om)\cap \mathcal{C}^{1,\al}(\ov{\Om})\cap  X_0(\Omega)$ weakly solve $$\del u+\fra u=g\text{ in }\Om,\quad u=0\text{ in }\RR \setminus \Omega,$$where $\|g\|_{\mathcal{C}^{0,1}(\Om)}\leq1$. Let $U \in \XX_{\Om}^s(\R_+^{N+1})$ be the $s$-harmonic extension of $u$ to the upper half space $\R_+^{N+1}$. Then for every $(x,y)\in\R_+^{N+1}$,
\begin{align}
    &y^{1-2s}|\pa_y U(x,y)| \no \\
    &\leq C\left(1+\chi_{\{s<\frac12\}}+\chi_{\{s=\frac12\}} \left(|\log(\text{dist}(x,\pa\Om))|+\text{dist}(x,\pa\Om)^{-\frac12}\right)+\chi_{\{s>\frac12\}}\text{dist}(x,\pa\Om)^{1-2s}\right).
\end{align} 
\end{lemma}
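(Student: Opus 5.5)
Differentiating the Poisson formula \eqref{Poisson_formula} in $y$ directly would produce kernels too singular to handle pointwise. Instead I would move the $y$-derivative onto the boundary data. Since $P(x,y)=y^{-N}P(x/y,1)$, integration by parts in $\xi$ gives
\[
y^{1-2s}\pa_y U(x,y)=c\int_{\RR}\frac{y\,(u(\xi)-u(x))}{(|x-\xi|^2+y^2)^{\frac{N+2s}{2}}}\,{\rm d}\xi
=\frac{c}{y^{2s-1}}\int_{\RR}\frac{\ldots}{\ldots}.
\]
Equivalently, I would use the classical representation
\[
y^{1-2s}\pa_y U(x,y)=C\int_{\RR}\frac{y^{2-2s}\,(\ldots)}{\ldots},
\]
or, more robustly, the identity $y^{1-2s}\pa_yU(\cdot,y)=-\big(P(\cdot,y)\ast(-\Delta)^s u\big)$ up to constants. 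This identity holds because $W:=y^{1-2s}\pa_yU$ solves the conjugate degenerate equation $\operatorname{div}(y^{2s-1}\nabla W)=0$, with boundary value $-(-\Delta)^su$ by \eqref{DtoN}, and it decays at infinity. With the convolution form in hand, it suffices to bound $f:=(-\Delta)^s u$ on $\RR$ and then convolve with a probability kernel.

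The main step is therefore a pointwise estimate of $(-\Delta)^su(x)$, split into $x\in\Om$ and $x\notin\Om$.

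\emph{Points outside $\Om$.} For $x\notin\ov\Om$ we have $(-\Delta)^su(x)=-c\int_\Om u(\xi)|x-\xi|^{-N-2s}{\rm d}\xi$. Since $u\in\mathcal C^{1,\al}(\ov\Om)$ and $u=0$ on $\pa\Om$, we have $|u(\xi)|\le C\,\text{dist}(\xi,\pa\Om)$. Integrating then gives $|(-\Delta)^su(x)|\le C\int_{\{|z|\ge d\}}\min(|z|,1)|z|^{-N-2s}{\rm d}z$ with $d=\text{dist}(x,\pa\Om)$. This is bounded by $C(1+d^{1-2s})$ for $s\neq\frac12$ and by $C(1+|\log d|)$ for $s=\frac12$.

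\emph{Points inside $\Om$.} Here I would use the equation $(-\Delta)^s u=g+\Delta u$ together with a bound on $\Delta u$ near the boundary. A cleaner route is to estimate the principal value directly on $B_{d/2}(x)$ and on its complement. On $B_{d/2}(x)$ I would use a Schauder-type bound $|D^2u|\le Cd^{\al-1}$, which follows from interior $\mathcal C^{2,\al}$ rescaled estimates combined with the global $\mathcal C^{1,\al}$ control. On the complement I would use $|u(\xi)-u(x)-\nabla u(x)\cdot(\xi-x)|\le C|\xi-x|$ together with the Lipschitz bound. This again yields the same profile $1+d^{1-2s}$ for $s>\frac12$ and $1+|\log d|$ for $s=\frac12$, and a bounded quantity for $s<\frac12$. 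The $d^{-1/2}$ term for $s=\frac12$ gives slack to absorb the rescaled second-derivative contribution.

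Finally, the bound on $f$ transfers to $W(x,y)=\int P(x-\xi,y)f(\xi)\,{\rm d}\xi$. Write $h(d)$ for the singular profile. Then $h(\text{dist}(\cdot,\pa\Om))$ is locally integrable, since $1-2s>-1$ and $\pa\Om$ is smooth, and I expect $P(\cdot,y)\ast h(\text{dist}(\cdot,\pa\Om))(x)\le C\,h(\text{dist}(x,\pa\Om))$. To prove this, I would split $\xi$ by whether $\text{dist}(\xi,\pa\Om)\ge\frac12\text{dist}(x,\pa\Om)$. On that region $h(\text{dist}(\xi,\pa\Om))\le Ch(\text{dist}(x,\pa\Om))$ because $h$ is decreasing. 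On the remaining region, a neighbourhood of $\pa\Om$ of width $\sim d$, the kernel satisfies $P\le Cy^{2s}d^{-N-2s}$ when $y\lesssim d$, and $P\le Cy^{-N}$ otherwise. Integrating $h$ over that thin layer then gives the claim. For $s<\frac12$ everything is bounded, which matches the first two terms in the lemma.

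I expect the main obstacle to be the interior estimate of $(-\Delta)^su$ near $\pa\Om$. It requires a quantitative rescaled second-derivative bound for a solution of the mixed equation with only $\mathcal C^{1,\al}(\ov\Om)$ and $\mathcal C^2(\Om)$ information, and one must check that the resulting power of $d$ is no worse than the stated one. Justifying the conjugate-extension identity for $W$ in the weak setting is routine by comparison.
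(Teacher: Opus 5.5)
Your architecture is the paper's: $V=-y^{1-2s}\partial_yU$ solves the conjugate problem $\operatorname{div}(y^{2s-1}\nabla V)=0$ with trace $(-\Delta)^su$. So it is a kernel average of $(-\Delta)^su$, and everything reduces to a profile bound $|(-\Delta)^su|\le C\,h(d(\cdot))$ on $\mathbb{R}^N$ plus an averaging inequality, where $d=d(x)=\operatorname{dist}(x,\partial\Omega)$. The kernel, though, is not $P(\cdot,y)$ but the conjugate kernel $Q(z,y)=C\,y^{2-2s}(|z|^2+y^2)^{-\frac{N+2-2s}{2}}$; the two agree only for $s=\frac12$. This is harmless, since you only use positivity, unit mass and decay.

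\textbf{The gap is the averaging step as you describe it.} You bound the kernel uniformly on the whole layer $\{d(\xi)<d(x)/2\}$ and multiply by $\int_{\{d<d(x)/2\}}h(d(\xi))\,d\xi\simeq|\partial\Omega|\,d^{2-2s}$. At $y\simeq d$ this gives a bound $\simeq d^{2-N-2s}$, which exceeds the target $d^{1-2s}$ by the factor $d^{1-N}$. The layer surrounds all of $\partial\Omega$, so you must exploit the decay of the kernel in $|x-\xi|$. The paper does this as follows:
\begin{itemize}
\item split $\{|x-\xi|>d/2\}$ into dyadic annuli $A_k=\{2^{k-1}d\le|x-\xi|\le2^kd\}$;
\item use the co-area formula to get $|\{\xi\in B_R(x):d(\xi)<\tau\}|\lesssim\tau R^{N-1}$, hence $\int_{B_{R_k}(x)}d(\xi)^{1-2s}\,d\xi\lesssim R_k^{N+1-2s}$;
\item multiply by $\sup_{A_k}Q\lesssim y^{2-2s}(R_{k-1}^2+y^2)^{-\frac{N+2-2s}{2}}$, so each term is $\lesssim R_{k-1}^{1-2s}$ uniformly in $y$, and the geometric series gives $C(1+d^{1-2s})$.
\end{itemize}
With that localization your averaging claim also works for $s=\frac12$ with $h=1+\max\{0,-\log d\}$, so the $d^{-1/2}$ term would be unnecessary. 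In the paper that term is not slack for second derivatives, as you suggest. It is an artifact of the crude H\"older bound of $I_2$ via $\|(-\Delta)^{1/2}u\|_{L^{2N}}$ and $Q(z,y)\le C|z|^{-N}$.

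For the profile bound, your exterior estimate via $|u(\xi)|\le C\min\{|x-\xi|,1\}$ is correct. It is more explicit than the paper, which only says it follows from the arguments of \cite{An}. In the interior the paper simply cites \cite[Eq.~(2.2)]{An}. Your near/far splitting is sound, with two adjustments:
\begin{itemize}
\item On the far part, do not subtract $\nabla u(x)\cdot(\xi-x)$, which is not integrable at infinity for $s\le\frac12$; the bound $|u(\xi)-u(x)|\le C\min\{|\xi-x|,1\}$ suffices.
\item On the near part, $\sup_{B_{d/2}(x)}|D^2u|\lesssim d^{-1}$ already gives $d^{-1}\cdot d^{2-2s}=d^{1-2s}$, so you need less than $d^{\alpha-1}$.
\end{itemize}
As you suspect, that second-derivative bound does not come from Laplacian Schauder applied to $-\Delta u=g-(-\Delta)^su$. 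The right-hand side contains the very quantity being estimated, and the global $\mathcal C^{1,\alpha}$ norm gives no pointwise control of $(-\Delta)^su$ when $1+\alpha\le2s$. You need an interior $\mathcal C^{1,1}$ estimate for the rescaled operator $-\Delta+r^{2-2s}(-\Delta)^s$, including a tail term and uniform in $r\in(0,1]$. Apply it to $u(x+r\,\cdot)-u(x)$ with $r=d/2$; its $L^\infty(B_1)$ norm and weighted tail are $O(r)$ because $u$ is Lipschitz and bounded. Otherwise, cite \cite{An} as the paper does.
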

\begin{proof}
% From \cite{SuVaWeZh1}, $u\in \mathcal{C}^2(\Om)$ and $u\in \mathcal{C}^{1,\al}(\ov{\Om})$ for every $\al\in(0,\min\{1,2-2s\})$. Using $u=0$ in $\RR \setminus \Omega$, we get $u\in \mathcal{C}^{0,1}(\RR)$. Further, $u \in L^{\infty}(\RR)$, and using $\text{supp}(\nabla u) \subset \Omega$, we also have $\norm{\nabla u}_{L^{\infty}(\RR)} \le C$ for some $C>0$.
% Recall that, 
% \begin{align*}
%     U(x,y):= \int_{\RR} P(x-\xi, y) u(\xi)  \d \xi, 
% \end{align*}
% where $$P(x,y):=C_{N,s}\frac{y^{2s}}{(|x|^2+y^2)^{\frac{N+2s}{2}}}, \quad x\in\RR,\, y>0.$$
% Then it is easy to verify that $U \in \mathcal{C}^{\infty}(\R^{N+1}_+)$. Further, from the change of variable $z=x-\xi$,  
% \begin{align*}
%     U(x,y) = \int_{\RR} P(z, y) u(x-z) \dz, 
% \end{align*}
% and hence, using Leibniz's integral rule for every $y>0$, we get 
% \begin{align*}
%     \abs{\nabla_xU(\cdot,y)} \le \int_{\RR} |P(z, y) \nabla u(x-z)| \dz \le \norm{\nabla u}_{L^{\infty}(\RR)} \int_{\RR} P(z, y) \dz = \norm{\nabla u}_{L^{\infty}(\RR)} \le C,
% \end{align*}
% which implies 
% \begin{align}\label{bound-1}
%    \norm{\nabla_xU(\cdot,y)}_{L^{\infty}(\RR)} \le C,  
% \end{align}
% where $C>0$ is independent of $y$. 
% Moreover, the pointwise convergence holds 
% \begin{align}\label{converge-1}
% \nabla_xU(x,\eps)\to\nabla u(x) \text{ for every }x\in B^N\setminus\pa\Om, \text{ as }\eps\to0.  \end{align}
% Further, using \eqref{DtoN}, for every $x\in\RR$,
% \begin{align}\label{converge-2}
%  -\eps^{1-2s}\frac{\pa U}{\pa y}(x,\eps)\to(-\Delta)^s u(x), \text{ as }\eps\to0.   
% \end{align} 
From \cite[Eq. (2.2)]{An}, we recall the following bound: there exists a constant $C>0$ such that,
\begin{equation}\label{frac_Laplace_bound}
    |(-\Delta)^s u(x)|\leq C(1+\chi_{\{s<\frac12\}}+\chi_{\{s=\frac12\}}|\log(\text{dist}(x,\pa\Om))|+\chi_{\{s>\frac12\}}\text{dist}(x,\pa\Om)^{1-2s}), \; \forall \, x\in\Omega.
\end{equation}
In view of \cite{SuVaWeZh1,anup-mitesh2023}, we have $u\in \mathcal{C}^{1,\al}(\ov{\Om})$ and $u\in \mathcal{C}^{0,1}(\RR)$. Now following the arguments of \cite[Theorem 2.2]{An}, it can readily be checked that \eqref{frac_Laplace_bound} holds for every $x\in\RR \setminus \overline{\Omega}$. 
% For $s < \frac{1}{2}$, $(-\Delta)^s u \in L^{\infty}(B^N)$ and hence in $L^1(B^N)$. 
% Notice that, there exists $\tilde{r}>0$ such that $\text{dist}(x,\pa\Om) \le \tilde{r}$ for every $x \in B^N$. Moreover, for $\hat{r} \in (0,1)$,
% \begin{align*}
%    &\text{ when } s=\frac{1}{2}: \int_{0}^{\hat{r}} |\log(t)| \dt = -\int_{0}^{\hat{r}} \log(t) \dt = -(t\log(t)-t)\bigg|_{t=0}^{t=\hat{r}}< \infty, \\
%    &\text{ when } s>\frac{1}{2}: \int_{0}^{\hat{r}} t^{1-2s} \dt = \frac{t^{2-2s}}{2-2s}\bigg|_{t=0}^{t=\hat{r}}< \infty. 
% \end{align*}
% Therefore, for $s\in (0,1)$, $(-\Delta)^s u \in L^{1}(B_n^N)$. This integrability will confirm that 
% \begin{align*}
%     \int_{B^N}\left|((-\Delta)^s u)((x-x_0)\cdot\nabla u)\phi(x,0)\right| \dx < \infty.
% \end{align*} 
% We consider the function 
% \begin{align*}
%     G(x, \eps):= ((x-x_0)\cdot\nabla_xU(x,\eps))\phi(x,\eps)\left(-\eps^{1-2s}\frac{\pa U}{\pa y}(x,\eps)\right), \text{ where } x \in B^N.
% \end{align*}
% Using \eqref{converge-1} and \eqref{converge-2}, we have $G(x, \eps) \ra ((-\Delta)^s u)((x-x_0)\cdot\nabla u)\phi(x,0)$ for every $x \in B^N \setminus \pa \Omega$, as $\eps \ra 0$. Further, using \eqref{bound-1}, 
% \begin{align}\label{bound-3}
%     |G(x, \eps)| \le C \phi(x,0) \left|-\eps^{1-2s}\frac{\pa U}{\pa y}(x,\eps)\right|,
% \end{align}
% for some $C>0$ independent of $\eps$.
Set $V(x,y):=-y^{1-2s}\pa_yU(x,y)$. Using $\text{div}(y^{1-2s}\nabla U)=0$ in $\R_+^{N+1}$, we get $\pa_yV=y^{1-2s}\Delta_xU$. We calculate 
\begin{align*}
    \text{div}(y^{2s-1}\nabla V)&=y^{2s-1}\Delta_xV+\pa_y(y^{2s-1}\pa_yV)\\&=y^{2s-1}(-y^{1-2s}\pa_y\Delta_xU)+\pa_y\Delta_xU=0\text{ in }\R_+^{N+1}.
\end{align*}
Therefore, $V$ satisfies the conjugate extension problem:
\begin{equation*}
\begin{cases}
    \text{div}(y^{2s-1}\nabla V)=0\text{ in }\R_+^{N+1},\\
    V(x,0)=-\lim_{y\to0^+}y^{1-2s}\pa_yU(x,y)=(-\Delta)^su(x) \text{ in }\RR.
\end{cases}
\end{equation*}
By Poisson kernel representation, we get 
\begin{align}\label{representation-I}
    V(x,y)=\int_{\RR}Q(x-\xi,y)(-\Delta)^s u(\xi)\d\xi,
\end{align}
where the conjugate Poisson kernel $$Q(x,y)=C_{N,s}\frac{y^{2-2s}}{(|x|^2+y^2)^{\frac{N+2-2s}{2}}},\quad x\in\RR,y>0.$$ 
Further,  
\begin{align}\label{normalize}
    \int_{\RR} Q(x-\xi,y) \d\xi =1. 
\end{align}
If $s< \frac{1}{2}$, then \eqref{frac_Laplace_bound} implies $|(-\Delta)^s u(x)| \le C$ for all $x\in \RR\setminus\pa\Om$. Combining this with \eqref{representation-I}, and \eqref{normalize}, for every $x\in \RR$ and $y>0$, $|V(x,y)| \le C$ where $C$ does not depend on $y$. For brevity, we denote $d(x) = \text{dist}(x, \pa \Omega)$. We split 
\begin{align}\label{V_n-1}
    |V(x,y)| \le \left( \int_{\{|z| \le \frac{d(x)}{2}\}} + \int_{\{|z| > \frac{d(x)}{2}\}} \right) Q(z,y) (-\Delta)^s u(x-z)\dz := I_{1} + I_{2},  
\end{align}
Now we consider $s> \frac{1}{2}$ and $s=\frac{1}{2}$ separately. 

\noi \textbf{For $s> \frac{1}{2}$:} In the case $|z|\leq\frac{d(x)}{2}$,  a simple computation yields $ d(x-z) \ge d(x) - |z| \ge \frac{d(x)}{2}$. Therefore, 
\begin{align*}
    I_1 \le C \left(1+d(x)^{1-2s}\right) \int_{\RR} Q(z,y) \dz = C \left(1+d(x)^{1-2s}\right).
\end{align*} 
To estimate $I_2$, we decompose $\{ |z| \ge \frac{d(x)}{2} \}= \cup_{k=0}^{\infty} A_k$, where $A_k:=\{ 2^{k-1}d(x) \le |z| \le 2^k d(x)\}$ for $k \ge 0$. 
Set $R_k=2^kd(x)$. Hence

\begin{align*}
    \int_{A_k} Q(z,y) d(x-z)^{1-2s} \dz \le  C_{N,s}\frac{y^{2-2s}}{(R_{k-1}^2+y^2)^{\frac{N+2-2s}{2}}} \int_{B_{R_k}(0)} d(x-z)^{1-2s} \dz. 
\end{align*}
Now using the change of variable $\xi=x-z$ and the  layer cake representation, 
\begin{align*}
   \int_{B_{R_k}(0)} d(x-z)^{1-2s} \dz &= \int_{B_{R_k}(x)} d(\xi)^{1-2s} \d\xi \\
   &= \int_{0}^{\infty} |\{\xi \in B_{R_k}(x) : d(\xi)^{1-2s} >t \}| \dt \\
    &= \int_{0}^{\infty} |\{\xi \in B_{R_k}(x) : d(\xi) < t^{\frac{1}{1-2s}} \}| \dt \\
    &\le (2s-1) \int_{0}^{\infty} |\{\xi \in B_{R_k}(x) : d(\xi) < \tau \}| \tau^{-2s} \d \tau.
\end{align*}
Observe that $d(\xi)\le d(x) +|z| \le 3R_k$ for every $\xi \in B_{R_k}(x)$. 
Further, if $\tau \le 3R_k$, from the co-area formula, 
\begin{align*}
    |\{\xi \in B_{R_k}(x) : d(\xi) \le \tau \}| = \int_{0}^{\tau} \left( \int_{\{\xi \in B_{R_k}(x) : d(\xi) = s \}} \d\HH^{N-1} \right) \ds \le C\tau R_k^{N-1}. 
\end{align*}
If $\tau > 3R_k$, then 
\begin{align*}
    |\{\xi \in B_{R_k}(x) : d(\xi) \le \tau \}| = |B_{R_k}(x)| = \omega_{N-1} R_k^N. 
\end{align*}
Therefore, 
\begin{align*}
    \int_{B_{R_k}(x)} d(\xi)^{1-2s} \d\xi \le C(N,s) \left(R_k^{N-1} \int_{0}^{3R_k} \tau^{1-2s} \d \tau + R_k^N \int_{3R_k}^{\infty} \tau^{-2s} \d \tau\right) \le C(N,s)R_k^{N+1-2s}. 
\end{align*}
As $R_k=2R_{k-1}$, the above estimates lead to
\begin{align*}
   \int_{A_k} Q(z,y) d(x-z)^{1-2s} \dz & \le   C(N,s)\frac{y^{2-2s}}{(R_{k-1}^2+y^2)^{\frac{N+2-2s}{2}}} R_{k-1}^{N+1-2s} \\
    &=   C(N,s) R_{k-1}^{1-2s} \frac{\left( \frac{y}{R_{k-1}}\right)^{2-2s}}{\left(1+\left(\frac{y}{R_{k-1}}\right)^2 \right)^{\frac{N+2-2s}{2}}}. 
\end{align*}
Next, we observe that 
\begin{align*}
    h(t) = \frac{t^{2-2s}}{(1+t^2)^{\frac{N+2-2s}{2}}} \ra 0 \text{ as } t \ra 0 \text{ and } t \ra \infty,
\end{align*}
which infers that $|h(t)| \le M$ for $t \in \R^+$. Therefore, for every $k \ge 0$, 
\begin{align*}
    \int_{A_k} Q(z,y) d(x-z)^{1-2s} \dz \le C(N,s) (2^kd(x))^{1-2s}. 
\end{align*}
The above estimate gives 
\begin{align*}
    I_2 \le C + C \int_{\{|z| > \frac{d(x)}{2}\}} Q(z,y) (-\Delta)^s u(x-z)\dz & \le C(N,s) \left(1 +  d(x)^{1-2s} \sum_{k=0}^{\infty} (2^k)^{1-2s} \right)\\
    & \le C(N,s) \left(1 +  d(x)^{1-2s} \right). 
\end{align*}
Therefore, from \eqref{V_n-1}, we infer that for $s>\frac{1}{2}$, $|V(x,y)| \le C(N,s) \left(1 +  d(x)^{1-2s} \right)$ where $(x,y) \in \R_+^{N+1}$. 

\noi \textbf{For $s= \frac{1}{2}$:} In the case $|z|\leq\frac{d(x)}{2}$, using the following bounds
\begin{align*}
  \frac{1}{2} d(x) \le  d(x) - |z| \le d(x-z) \le d(x) +|z| \le \frac{3}{2} d(x),
\end{align*}
we get $|\log\big(d(x-z)\big)| \le C(1+|\log(d(x))|)$, which implies 
\begin{align*}
    I_1 \le C(1+|\log(d(x))|)\int_{\RR} Q(z,y) \dz = C(1+|\log(d(x))|).
\end{align*} 
Next, we estimate $I_2$. For that purpose, we observe that $(-\Delta)^{\frac{1}{2}} u_n \in L^p(\RR)$ for every $p\in (1, \infty)$. Indeed, for any $R>0$, 
\begin{align*}
 \int_{B_R(0)} \left|(-\Delta)^{\frac{1}{2}} u_n(x) \right|^p \dx \le C \int_{B_R(0)} (1+|\log(d(x))|)^p \dx < \infty, \text{ since }   \int_{0}^{\delta} |\log(t)|^p \dt < \infty, 
\end{align*}
for some $\delta>0$. Now we choose $R>0$ such that $\Omega \subset B_{\frac{R}{2}}(0)$. Then, for $|x|>R$, using $u(x)=0$  we get 
\begin{align*}
    \left|(-\Delta)^{\frac{1}{2}} u(x)\right| = \left|-C_{N,s} \text{P.V.} \int_{\Omega} \frac{u(y)}{|x-y|^{N+1}} \dy\right| \le C(N,s) \norm{u}_{L^{\infty}(\Omega)} \frac{1}{|x|^{N+1}}, 
\end{align*}
where we have used the fact that for $y\in\Om$ it holds  $|x-y| \ge |x|-|y| \ge \frac{|x|}{2}$. 
Therefore,
\begin{align*}
    \int_{\RR \setminus B_R(0)} \left|(-\Delta)^{\frac{1}{2}} u(x) \right|^p \dx \le C(N) \|u\|^p_{L^{\infty}(\Omega)} \int_{R}^{\infty} t^{N-1-(N+1)p} \dt < \infty.  
\end{align*}
Further, we see that for every $y>0$, 
\begin{align*}
    Q(z,y) = C_{N,\frac{1}{2}} \frac{y}{(|z|^2+y^2)^{\frac{N+1}{2}}} \le \frac{C}{|z|^{N}},
\end{align*}
where the final bound comes using the fact the $y= |z|N^{-\frac{1}{2}}$ maximizes $y(|z|^2+y^2)^{-\frac{N+1}{2}}$. 
Therefore, applying H\"{o}lder's inequality with the conjugate pair $(\frac{2N}{2N-1}, 2N)$, we obtain
\begin{align*}
    I_2 \le  C(N) \norm{(-\Delta)^{\frac{1}{2}} u}_{L^{2N}(\RR)} \left(\int_{\{|z| > \frac{d(x)}{2}\}} \frac{\dz}{|z|^{\frac{2N^2}{2N-1}}}\right)^{\frac{2N-1}{2N}},
\end{align*}
where 
\begin{align*}
    \int_{\{|z| > \frac{d(x)}{2}\}} \frac{\dz}{|z|^{\frac{2N^2}{2N-1}}} = C(N) \int_{\frac{d(x)}{2}}^{\infty} \frac{t^{N-1}}{t^{\frac{2N^2}{2N-1}}} \dt \le c(N) d(x)^{N-\frac{2N^2}{2N-1}}.
\end{align*}
Hence
\begin{align*}
   I_2 \le  C(N) \norm{(-\Delta)^{\frac{1}{2}} u}_{L^{2N}(\RR)} d(x)^{-\frac{1}{2}}. 
\end{align*}
Accumulating all the estimates, for $(x,y) \in \R_+^{N+1}$, we arrive at 
\begin{align*}
    |V(x,y)| \le C\left(1+|\log(d(x))|+d(x)^{-\frac{1}{2}}\right), 
\end{align*}
where $C>0$ is independent of $y$.  

Thus, we have the following estimates for $V$ over $\R_+^{N+1}$: 
\begin{equation}\label{final-1}
    |V(x,y)| \le \left\{\begin{aligned}
&C,\,&\text{if}\;s<\frac{1}{2}; \\
&C\left(1+|\log(d(x))|+d(x)^{-\frac{1}{2}}\right),\,&\text{if}\;s=\frac{1}{2}; \\
&C\left(1 +  d(x)^{1-2s} \right),\,&\text{if}\;s>\frac{1}{2},
\end{aligned}
\right.
\end{equation}
where $C$ does not depend on $y$. This completes the proof. 
\end{proof}

\section{Compactness}\label{compact}

\subsection{Palais-Smale decomposition} In this section, we aim to prove a Palais-Smale decomposition for the energy functional $\tilde{I}_\la$.
\begin{definition}
    A sequence $\{ U_n \} \subset \XX_{\Om}^s(\R_+^{N+1})$ is said to be a Palais-Smale (PS) sequence for $\tilde{I}_\la$ at level $\eta$, if $\tilde{I}_\la(U_n) \ra \eta$ and $\tilde{I}_\la'(U_n) \ra 0$ in $\XX_{\Om}^s(\R_+^{N+1})^*$ as $n \ra \infty$. The functional $\tilde{I}_\la$ is said to satisfy (PS) condition at level $\eta$, if every (PS) sequence at level $\eta$ has a convergent subsequence. 
\end{definition}
We claim that, up to a subsequence, $\{U_n\}$ in Theorem \ref{Uniform_Theorem_CK} is a (PS) sequence for $\tilde{I}_\la$. Using the uniform boundedness of $\{U_n\}\subset \XX_{\Om}^s(\R_+^{N+1})$ and the trace inequality, we see that $\{\tilde{I}_\la(U_n)\}_n$ is bounded. Observe that for any $V\in \XX_{\Om}^s(\R_+^{N+1})$, $$\langle \tilde{I}_\la'(U_n),V\rangle=\int_{\Om}(|u_n(x)|^{p_n-2}u_n(x)-|u_n(x)|^{2^*-2}u_n(x))V(x,0)\dx,$$
and $$\||u_n(x)|^{p_n-2}u_n(x)-|u_n(x)|^{2^*-2}u_n(x)\|_{L^{\frac{2^*}{2^*-1}}(\Om)}\leq C\||u_n|^{2^*-1}+1\|_{L^{\frac{2^*}{2^*-1}}(\Om)}\leq C.$$
Using $\text{Tr}(\XX_{\Om}^s(\R_+^{N+1}))\underset{\text{continuous}}{\hookrightarrow}  H_0^1(\Om)$, $u_n\rightharpoonup u$ in $H_0^1(\Om)$ and $u_n\to u$ a.e. on $\Om$. Thus, $|u_n(x)|^{p_n-2}u_n(x)-|u_n(x)|^{2^*-2}u_n(x)\to0$ a.e. on $\Om$ and up to a subsequence $|u_n(x)|^{p_n-2}u_n(x)-|u_n(x)|^{2^*-2}u_n(x)\rightharpoonup0$ in $L^{\frac{2^*}{2^*-1}}(\Om)$. Hence, $\langle \tilde{I}_\la'(U_n),V\rangle\to0$ as $n\to\infty$ and $\{U_n\}$ is a (PS) sequence of $\tilde{I}_\la$.

It is worth mentioning that every (PS) sequence of $\tilde{I}_\la$ may not converge strongly due to the noncompactness of the trace embedding $\XX_{\Om}^s(\R_+^{N+1}) \hookrightarrow L^{2^*}(\Omega)$. Moreover, the weak limit of the (PS) sequence can be zero even if $\eta>0$. This necessitates studying the profile decomposition of the (PS) sequences for the energy functional \eqref{ext_func} associated with \eqref{ext_main_pro}. We define a $C^1$ functional $\tilde{I}$ on $\DD^{1,2}(\RR)$ as $$\tilde{I}(u):=\frac12\int_{\RR}|\nabla u|^2\dx-\frac1{2^*}\int_{\RR}|u|^{2^*}\dx, \; u \in \DD^{1,2}(\RR).$$

In the following proposition, we state the profile decomposition of the (PS) sequences for \eqref{ext_func} (see also \cite{ChGuMaSr}). We need the following remark.

\begin{remark}\label{Abun-Talenti}
    Recall the following critical Sobolev inequality: 
    \begin{align}\label{critical}
        \mathcal{S}_{N}\|u\|_{L^{2^*}(\mathbb{R}^N)}^2 \leq \|\nabla u\|_{L^2(\mathbb{R}^N)}^2, \; \forall \, u \in \mathcal{D}^{1,2}(\RN), 
    \end{align}
    where $\mathcal{S}_{N}$ is the optimal constant. From \cite{Aubin1976, Talenti1976}, it is known that every minimizer of \eqref{critical} lies in the set $\mathcal{F}$, where  
    \begin{equation}\label{bubble}
\mathcal{F} := \left\{c\lambda^{-\frac{N-2}{2}} \hat{u}\left( \frac{x - x_0}{\lambda} \right) : \,c\in\R\setminus \{0\},\,\lambda > 0,\, x_0 \in \mathbb{R}^N\right\},
\end{equation}
and
\begin{equation*}
  \hat{u}(x) := (1 + |x|^2)^{-\frac{N-2}{2}}.  
\end{equation*}
Further, for $s \in (0,1)$ we observe that the seminorm $[\hat{u}]_{s}$ is finite for $N \ge 4$ and for $N = 3$ with the restriction that $s> \frac{1}{2}$. 
\end{remark}

\begin{proposition}[Palais-Smale decomposition]\label{PS}
  Let $N\geq 3$ and $s\in(0,1)$. For $N=3$ we assume that $s>\frac{1}{2}$.  Let $\la>0$ and $\{U_n\}\subset \XX_{\Om}^s(\R_+^{N+1})$ be a (PS)-sequence of $\tilde{I}_\la$, i.e. $\tilde{I}_\la(U_n) \ra \eta $ in $\R$ and $\nabla\tilde{I}_\la(U_n)\to0$ in $(\XX_{\Om}^s(\R_+^{N+1}))^*$. Then there exist $k\in\N$ and sequences $\{\si_n^i\}_n\subset\R_+$ and $\{x_n^i\}_n\subset\Om$ for $1\leq i\leq k$, a solution $U_\infty\in \XX_{\Om}^s(\R_+^{N+1})$ of \eqref{ext_main_pro}, and nontrivial solutions $\Psi^i\in \DD^{1,2}(\RR)$, $1\leq i\leq k$ to the limiting problem associated with \eqref{ext_main_pro}, namely
  $$\del \Psi^i=|\Psi^i|^{2^*-2}\Psi^i\text{ in }\RR,$$
  such that, up to a subsequence, 
   \begin{align*}
    &U_n=U_{\infty}+\sum_{i=1}^kE_s(\Psi_n^i)+ o_n(1) \, \text{ in }\,  \XX^s(\R_+^{N+1}),\\
   & \text{Tr}(U_n)=\text{Tr}(U_{\infty})+\sum_{i=1}^k\Psi_n^i+ o_n(1) \, \text{ in } \, \DD^{1,2}(\RR).
      \end{align*}
Here $\Psi_n^i$ denotes rescaled function
$$\Psi_n^i:={(\si_n^i)}^{\frac{N}{2^*}}\Psi^i(\si_n^i(x-x_n^i)), \quad 1\leq i\leq k, \, n\in\mathbb{N}.$$
Moreover, it holds
 \begin{align*}
        &\eta = \tilde{I}_\la(U_\infty)+\sum_{i=1}^k \tilde{I}(\Psi^i) + o_n(1),\\
        &\si_n^i \ra \infty, \text{ and } \left|\log\left(\frac{\si_n^i}{\si_n^j}\right)\right|+\si_n^i|x_n^i-x_n^j|\to\infty, \text{ for } 1\leq i\neq j\leq k.
    \end{align*}
    Furthermore, in the case $k=0$, the above expression holds without $\Psi_n^i$.
\end{proposition}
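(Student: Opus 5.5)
The plan is the classical Struwe iteration, run on traces and transported to the extension through the isometry $E_s$. The key structural fact is this: under the blow-up scaling $\Psi\mapsto \si^{\frac{N-2}{2}}\Psi(\si(\cdot-x_0))$, the local energy is invariant, while the nonlocal part scales like
\[
[\Psi_\si]_s^2=\si^{2s-2}[\Psi]_s^2 .
\]
So the nonlocal operator disappears in the limit, and the limiting profiles solve $-\Delta\Psi=|\Psi|^{2^*-2}\Psi$ in $\RR$.

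\textbf{Step 1 (boundedness and weak limit).} Testing $\tilde I_\la'(U_n)$ against $U_n$ and combining with $\tilde I_\la(U_n)\to\eta$ gives
\[
\Big(\tfrac12-\tfrac1p\Big)\tilde\rho(U_n)^2\le C+o(1)\tilde\rho(U_n),
\]
using $p>2$, and $\tfrac12-\tfrac1{2^*}>0$ absorbs the critical term. Hence $\{U_n\}$ is bounded. Pass to a weak limit $U_n\rightharpoonup U_\infty$ in $\XX_\Om^s(\R_+^{N+1})$, so that $u_n\to u_\infty$ in $L^q(\Om)$ for $q<2^*$ and a.e. Then $U_\infty$ solves \eqref{ext_main_pro}, and the subcritical term $\la|u_n|^{p-2}u_n$ converges strongly in the dual. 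Set $V_n^1=U_n-U_\infty$. By Brezis--Lieb and the weak convergence, $V_n^1$ is a (PS) sequence for the functional with only the critical term, at level $\eta-\tilde I_\la(U_\infty)$. Moreover the $\la$-term and the compact lower-order effects vanish.

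\textbf{Step 2 (extracting a bubble).} If $\|\text{Tr}(V_n^1)\|_{2^*}\to0$, then testing with $V_n^1$ gives $\tilde\rho(V_n^1)\to0$ and we stop with $k=0$. Otherwise, use Lions' concentration/Lévy concentration function on $v_n=\text{Tr}(V_n^1)$: choose $\si_n^1,x_n^1$ with
\[
\sup_z\int_{B_{1/\si_n^1}(z)}|\nabla v_n|^2=\int_{B_{1/\si_n^1}(x_n^1)}|\nabla v_n|^2=\delta
\]
for a small fixed $\delta$. Define $w_n=(\si_n^1)^{-\frac{N-2}{2}}v_n(x_n^1+\cdot/\si_n^1)$, which is bounded in $\DD^{1,2}(\RR)$. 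Then:
\begin{itemize}
\item[(a)] Since $v_n\rightharpoonup0$, necessarily $\si_n^1\to\infty$. If $\si_n^1$ stayed bounded, compactness of $H_0^1\hookrightarrow L^2_{loc}$ plus the $\varepsilon$-regularity argument would force the local energy to be zero.
\item[(b)] $\si_n^1\operatorname{dist}(x_n^1,\pa\Om)$ does not stay bounded, as in Struwe's argument. Otherwise the limit would be a nontrivial solution on a half space vanishing outside it, which contradicts the Pohozaev identity on the half space. So the limit domain is $\RR$.
\item[(c)] $w_n\rightharpoonup\Psi^1\neq0$. Nontriviality follows from the normalization $\delta<\SS_N^{N/2}/2$ and the local Sobolev $\varepsilon$-regularity estimate.
\end{itemize}

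\textbf{Step 3 (identifying the limit equation).} Test the equation against $\phi(\si_n^1(x-x_n^1))(\si_n^1)^{\frac{N-2}{2}}$ with $\phi\in\CC_c^\infty(\RR)$. The gradient term becomes $\int\nabla w_n\cdot\nabla\phi$. The nonlocal term is bounded by
\[
[v_n]_s\,[\phi_n]_s\le C(\si_n^1)^{s-1}\to0 .
\]
The $\la$ term is $o(1)$ by scaling, since $p<2^*$. Therefore $-\Delta\Psi^1=|\Psi^1|^{2^*-2}\Psi^1$. This is where $[\hat u]_s<\infty$, and hence $s>\frac12$ for $N=3$, is used: it guarantees that $\Psi^1$ (a finite-energy solution, decaying like a Talenti bubble) gives a rescaled profile $\Psi_n^1$ lying in $X_0$ after cut-off, with $[\Psi_n^1]_s\to0$. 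Accordingly set
\[
V_n^2=V_n^1-E_s(\eta\Psi_n^1),
\]
where $\eta$ is a cutoff near $x_n^1$ on the scale $\operatorname{dist}(x_n^1,\pa\Om)$.

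\textbf{Step 4 (energy splitting and iteration).} Check that $V_n^2$ is again a (PS) sequence of the critical functional. Its energy and norm split as
\[
\tilde\rho(V_n^2)^2=\tilde\rho(V_n^1)^2-\|\nabla\Psi^1\|_2^2+o(1),
\]
and the energy drops by $\tilde I(\Psi^1)\ge\frac1N\SS_N^{N/2}$. Consequently the iteration terminates after finitely many steps $k$. The orthogonality relation $|\log(\si_n^i/\si_n^j)|+\si_n^i|x_n^i-x_n^j|\to\infty$ follows from the weak convergence to zero of successive rescalings, as usual.

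The main obstacle is Step 2(b) together with the cutoff in Step 3: the error $E_s((1-\eta)\Psi_n^1)$ must be shown to tend to zero in the $\XX^s$ norm. For this I would use the decay $|\Psi^1(x)|\lesssim(1+|x|)^{2-N}$ and the scaling $\si^{2s-2}$ of the Gagliardo seminorm.
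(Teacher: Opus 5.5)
Your outline is correct, but it takes a longer route than the paper.

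\textbf{The paper's route.} Right after Step 1, the paper uses that the embedding $X_0(\Om)\hookrightarrow\dot H^s(\RR)$ is compact (Rellich plus $[w]_s^2\le C\|w\|_2^{2-2s}\|\nabla w\|_2^{2s}$). Hence $\text{Tr}(U_n)\to\text{Tr}(U_\infty)$ in $\dot H^s(\RR)$, and by \eqref{isometry} $U_n\to U_\infty$ strongly in $\DD^{1,2}(\R_+^{N+1},y^{1-2s})$. So every nonlocal term involving $V_n=U_n-U_\infty$ is $o(1)$ uniformly on bounded sets of test functions. Then $\text{Tr}(V_n)$ is a (PS) sequence of the purely local functional $\tilde I$ on $H_0^1(\Om)$, and Struwe's global compactness theorem applies verbatim. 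The only remaining step is lifting to the extension via $[\Psi_n^i]_s^2=(\si_n^i)^{2s-2}[\Psi^i]_s^2\to0$.

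\textbf{Your route.} You re-run Struwe's extraction on the mixed functional and kill the nonlocal term at each stage by scaling. This works, but you must re-check every step of Struwe's proof with the nonlocal term present. The main one your sketch skips is the $\varepsilon$-regularity/nontriviality step 2(c). Testing with $\psi_n^2v_n$, where $\psi_n=\psi(\si_n^1(\cdot-x_n^1))$, gives $\A(v_n,\psi_n^2v_n)=[\psi_nv_n]_s^2-\iint v_n(x)v_n(y)(\psi_n(x)-\psi_n(y))^2\d\mu$. You need the last term to be $O((\si_n^1)^{2s-2})$. It is: it is bounded by $\|v_n\|_{2^*}^2$ times the $L^{N/2}$ norm of $\int|\psi_n(\cdot)-\psi_n(y)|^2|\cdot-y|^{-N-2s}\dy$, and that norm scales like $(\si_n^1)^{2s-2}$.

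\textbf{Trade-off.} The paper's reduction makes all this bookkeeping unnecessary. As a by-product it shows directly that the nonlocal energy of $U_n-U_\infty$ vanishes, which the paper uses later. Your approach is self-contained and shows why the nonlocal operator drops out of the profiles, at the cost of more verification.

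\textbf{Two smaller points.}
\begin{enumerate}
\item Your Step 1 covers only $p>2$. For $p=2$, which the setting allows, your inequality gives nothing. You then need the paper's variant: $2\tilde I_\la(U_n)-\langle\tilde I_\la'(U_n),U_n\rangle=(1-\frac{2}{2^*})\|u_n\|_{2^*}^{2^*}$ bounds the critical term, and H\"older then controls $\la\|u_n\|_2^2$.
\item You place the role of $s>\frac12$ (for $N=3$) in the wrong step. A cut-off profile $\eta\Psi_n^1$ lies in $X_0(\Om)$ and satisfies $[\eta\Psi_n^1]_s\to0$ for every $s$. This follows by interpolating $\|\eta\Psi_n^1\|_2\to0$ against $\|\nabla(\eta\Psi_n^1)\|_2\le C$. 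The condition, via $[\Psi^i]_s<\infty$, is needed only so that the uncut $E_s(\Psi_n^i)$ in the final statement makes sense. Here $\Psi^i$ need not be a Talenti bubble; $[\Psi^i]_s<\infty$ comes from the decay $|\Psi^i(x)|\lesssim(1+|x|)^{2-N}$, obtained via the Kelvin transform.
\end{enumerate}
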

\begin{proof}
We divide our proof into a few steps. 
\vspace{0.2 cm} \\
\textbf{Step-1:} We first show that the sequence $\{U_n\}$ is bounded. Observe that by the trace inequality and the H\"{o}lder's inequality, when $p>2$,
\begin{align*}
    \eta+o_n(1)\tilde{\rho}(U_n)\geq  \tilde{I}_\la(U_n)-\frac1p\langle \tilde{I}_\la'(U_n),U_n\rangle
    &{\geq}\left(\frac12-\frac{1}{p}\right)\tilde{\rho}(U_n)^2.
\end{align*}
When $p=2$,
\begin{align*}
    C+o_n(1)\tilde{\rho}(U_n)\geq \left| 2\tilde{I}_\la(U_n)-\langle \tilde{I}_\la'(U_n),U_n\rangle\right|
    &=\left(1-\frac{2}{2^*}\right)\int_{\Om}|U_n(x,0)|^{2^*}\dx\\
    &\geq  C\left(\int_{\Om}|U_n(x,0)|^2\right)^{\frac{2^*}{2}}.
\end{align*}
Again using the trace inequality and the H\"{o}lder's inequality,
\begin{align*}
    \tilde{\rho}(U_n)^2&=2\tilde{I}_\la(U_n)+\la\int_{\Om}|U_n(x,0)|^2\dx+\frac2{2^*}\int_{\Om}|U_n(x,0)|^{2^*}\dx\\
    &\leq C+(C+o_n(1)\tilde{\rho}(U_n))^{\frac{2}{2^*}}+(C+o_n(1)\tilde{\rho}(U_n)).
\end{align*}
Hence $\{U_n\}$ is bounded in $\XX_{\Om}^s(\R_+^{N+1})$. By the ref{}lexivity of $\XX_{\Om}^s(\R_+^{N+1})$, $U_n\rightharpoonup U_{\infty}$ in $\XX_{\Om}^s(\R_+^{N+1})$ and $U_n\to U_{\infty}$ a.e. in $\R_+^{N+1}$. By the definition of the norm $\tilde{\rho}$, $\{\text{Tr}(U_n)\}$ is bounded in $X_0(\Om)$. By the ref{}lexivity of $X_0(\Om)$, $\text{Tr}(U_n) \rightharpoonup v$ in $X_0(\Om)$.
By the compact embedding $X_0(\Om) \hookrightarrow \dot{H}^s(\RR)$ (see \cite[Lemma 2.2]{ChGuMaSr}), up to a subsequence $\{\text{Tr}(U_n)\}$ converging to  $v$ w.r.t. the norm of $\dot{H}^s(\RR)$.
Now, using the norm equivalence \eqref{isometry}, 
$$ \int_{\R_+^{N+1}}y^{1-2s}|\nabla (U_n-E_s(v))|^2\dx\dy=[\text{Tr}(U_n)-v]_s^2\to0.$$
Thus $U_{\infty}=E_s(v)$ a.e. in $\R_+^{N+1}$ and $$ \int_{\R_+^{N+1}}y^{1-2s}|\nabla (U_n-U_\infty)|^2\dx\dy\to0\text{ as }n\to\infty.$$
Note that, though $U_n\to U_\infty$ in $\DD^{1,2}(\R^{N+1}_+, y^{1-2s})$, $\text{Tr}(U_n)$ may not converge to $v$ in $H_0^1(\Omega)$. Hence, $U_n$ need not converge to $U_\infty$ in $\XX_{\Om}^s(\R_+^{N+1})$.

\vspace{2mm}

\noi \textbf{Step-2:} Let $V_n:=U_n-U_{\infty}$. Then $V_n\rightharpoonup0$ in $\XX_{\Om}^s(\R_+^{N+1})$ and $$ \int_{\R_+^{N+1}}y^{1-2s}|\nabla V_n|^2\dx\dy=o_n(1).$$ 
For $p<2^*$, using $\text{Tr}(\XX_{\Om}^s(\R_+^{N+1}))\underset{\text{continuous}}{\hookrightarrow}  H_0^1(\Om)\underset{\text{compact}}{\hookrightarrow} L^p(\Om)$, we have $\text{Tr}(V_n)\rightharpoonup0$ in $H_0^1(\Om)$ and $\text{Tr}(V_n)\to 0$ in $L^p(\Om)$. Applying Brezis-Lieb lemma, observe that
\begin{align*}
    \tilde{I}(\text{Tr}(V_n))&=\frac12\int_{\Om}|\nabla V_n(x,0)|^2\dx-\frac1{2^*}\int_{\Om}|V_n(x,0)|^{2^*}\dx\\
    &=\frac12\tilde{\rho}(V_n)^2-\frac{\la}p\int_{\Om}|V_n(x,0)|^p\dx-\frac1{2^*}\int_{\Om}|V_n(x,0)|^{2^*}\dx+o_n(1)\\
    & =\tilde{I}_\la(V_n)+o_n(1)=\tilde{I}_\la(U_n)-\tilde{I}_\la(U_\infty)+o_n(1).
\end{align*}
Furthermore, using $\text{Tr}(V_n)\rightharpoonup0$ in $H_0^1(\Om)$, 
\begin{align*}
    \langle \tilde{I}'(\text{Tr}(V_n)),\phi\rangle&=\int_{\Om}\nabla V_n(x,0)\cdot\nabla\phi(x)\dx-\int_{\Om}|V_n(x,0)|^{2^*-2}V_n(x,0)\phi(x)\dx=o_n(1),
\end{align*}
for any $\phi\in H^1_0(\Om)$.

Thus, $\{\text{Tr}(V_n)\}$ is a (PS) sequence of $\tilde{I}$ at the level $\eta-\tilde{I}_\la(U_\infty)$. \vspace{0.2 cm} \\
\noi \textbf{Step-3:} If $\text{Tr}(V_n)\to0$ in $X_0(\Om)$, then the conclusion holds for $k=0$. Now suppose $\text{Tr}(V_n)\not\to0$ in $X_0(\Om)$. Applying \cite[Lemma III.3.3]{St}, we obtain sequences $\{x_n^1\}\subset\Om$, $\{\si_n^1\} \subset (0, \infty)$ with $\si_n^1\to\infty$, a nontrivial solution $\Psi^1$ of 
$$\del U=|U|^{2^*-2}U\text{ in }\RR,$$ and a (PS) sequence $\{w_n\}$ for $\tilde{I}$ in $H_0^1(\Om)$ 
such that
$$w_n=\text{Tr}(V_n)-\Psi_n^1+o_n(1)\text{ in }\DD^{1,2}(\RR),$$
where $\Psi_n^1=(\si_n^1)^{\frac{N-2}{2}} \Psi^1(\si_n^1(\cdot-x_n^1))$. Moreover,
\begin{align*}
    \tilde{I}(w_n)&=\tilde{I}(\text{Tr}(V_n))-\tilde{I}(\Psi^1)+o_n(1)=\tilde{I}_\la(U_n)-\tilde{I}_\la(U_\infty)-\tilde{I}(\Psi^1)+o_n(1).
\end{align*}
Following the proof of \cite[Theorem III.3.1]{St} and iterating the above process, we get the decomposition for $\text{Tr}(U_n)$:
\begin{align*}
        &\text{Tr}(U_n)=\text{Tr}(U_{\infty})+\sum_{i=1}^k\Psi_n^i  + o_n(1) \text{ in } \DD^{1,2}(\RR), \\
        & \tilde{I}_\la(U_n) = \tilde{I}_\la(U_\infty)+\sum_{i=1}^k \tilde{I}(\Psi^i) + o_n(1),\\
        &\si_n^i \ra \infty, \text{ and } \left|\log\left(\frac{\si_n^i}{\si_n^j}\right)\right|+\si_n^i|x_n^i-x_n^j|\to\infty, \text{ for } 1\leq i\neq j\leq k.
    \end{align*}

\noi \textbf{Step-4:} From Step-3 we have $$\text{Tr}(U_n)=\text{Tr}(U_{\infty})+\sum_{i=1}^k\Psi_n^i  + o_n(1) \text{ in } \DD^{1,2}(\RR),$$ and from Step-1, we have $U_n=U_\infty+o_n(1) \, \mbox{ in }\, \DD^{1,2}(\R^{N+1}_+, y^{1-2s})$. From Remark \ref{Abun-Talenti} and the hypothesis on $N,s$, we see that $[\Psi^i]_s < \infty$, and further using the isometry we get
\begin{align*}
&\sum_{i=1}^k\int_{\R_+^{N+1}}y^{1-2s}|\nabla E_s(\Psi_n^i)|^2\dx\dy
=\sum_{i=1}^k[\Psi_n^i]_s^2\\
&=\sum_{i=1}^k\iint_{\R^{2N}}\frac{|\Psi_n^i(x)-\Psi_n^i(y)|^2}{|x-y|^{N+2s}}\dx\dy\\
    &=\sum_{i=1}^k\iint_{\R^{2N}}(\si_n^i)^{\frac{2N}{2^*}+(N+2s)}\frac{|\Psi^i(\si_n^i(x-x_n^i))-\Psi^i(\si_n^i(y-x_n^i))|^2}{|\si_n^i(x-x_n^i)-\si_n^i(y-x_n^i)|^{N+2s}}\dx\dy\\
    &=\sum_{i=1}^k(\si_n^i)^{2s-2}\iint_{\R^{2N}}\frac{|\Psi^i(x)-\Psi^i(y)|^2}{|x-y|^{N+2s}}\dx\dy=O\left(\sum_{i=1}^k (\si_n^i)^{2s-2}\right)=o_n(1).
\end{align*}
Hence, $$U_n=U_{\infty}+\sum_{i=1}^kE_s(\Psi_n^i)  + o_n(1) \text{ in } \XX^s(\R_+^{N+1}).$$
This completes the proof. 
\iffalse++++++++++++++++++   
% \noi \textbf{Step-4:} Let $R_n=\text{Tr}(U_n)-\text{Tr}(U_{\infty})-\sum_{i=1}^k\Psi_n^i$. Then by the previous step, $R_n\to0$ in $\DD^{1,2}(\RR)$. Observe that 
%     \begin{align}
%     \sum_{i=1}^k[\Psi_n^i]_s^2+[R_n]_s^2&\leq C\bigg([\text{Tr}(U_n)-\text{Tr}(U_{\infty})]_s^2+\sum_{i=1}^k[\Psi_n^i]_s^2\bigg)=C\sum_{i=1}^k[\Psi_n^i]_s^2+o_n(1)\nonumber\\
%     &=C\sum_{i=1}^k\iint_{\R^{2N}}\frac{|\Psi_n^i(x)-\Psi_n^i(y)|^2}{|x-y|^{N+2s}}\dx\dy+o_n(1)\nonumber\\
%     &=C\sum_{i=1}^k\iint_{\R^{2N}}(\si_n^i)^{\frac{2N}{2^*}+(N+2s)}\frac{|\Psi^i(\si_n^i(x-x_n^i))-\Psi^i(\si_n^i(y-x_n^i))|^2}{|\si_n^i(x-x_n^i)-\si_n^i(y-x_n^i)|^{N+2s}}\dx\dy+o_n(1)\nonumber\\
%     &=C\sum_{i=1}^k(\si_n^i)^{2s-2}\iint_{\R^{2N}}\frac{|\Psi^i(x)-\Psi^i(y)|^2}{|x-y|^{N+2s}}\dx\dy+o_n(1)\nonumber\\
%     &=O\left(\sum_{i=1}^k (\si_n^i)^{2s-2}\right)+o_n(1)=o_n(1).\nonumber
% \end{align}
% By the isometry, 
% \begin{align*}
%     \sum_{i=1}^k\int_{\R_+^{N+1}}y^{1-2s}|\nabla E_s(\Psi_n^i)|^2\dx\dy&+\int_{\R_+^{N+1}}y^{1-2s}|\nabla E_s(R_n)|^2\dx\dy=\sum_{i=1}^k[\Psi_n^i]_s^2+[R_n]_s^2=o_n(1).
% \end{align*}
% Hence, using the linearity of the extension map $E_s$, we obtain the decomposition $$U_n=U_{\infty}+\sum_{i=1}^k E_s(\Psi_n^i)+ o_n(1) \text{ in } \XX^s(\R_+^{N+1}).$$
% This completes the proof. 
+++++++++++++++++++\fi
\end{proof}
\begin{remark}
    Using \eqref{Poisson_formula}, we observe that for any $1\leq i\leq k$, $n\in \NN$, $x\in\RR,y>0$, $$E_s(\Psi_n^i)(x,y)=(\si_{n}^i)^{\frac{N-2}{2}}E_s(\Psi^i)(\si_n^i((x,y)-(x_n^i,0))).$$
\end{remark}
\begin{remark}\label{error_decay} 
    From Step-4 in the proof of the Proposition \ref{PS}, we observe that 
    \begin{align*}
        \int_{\R_+^{N+1}}y^{1-2s}|\nabla (U_n-U_{\infty})|^2\dx\dy=O\left(\sum_{i=1}^k(\si_{n}^i)^{2s-2}\right)+o_n(1)=O\left(\sum_{i=1}^k(\si_{n}^i)^{2s-2}\right).
    \end{align*}
\end{remark}
\subsection{Safe regions}\label{safe_reg_subsec}
For every $n$, we denote $\si_n$ to be the minimum of $\si_n^i$ i.e. $$\si_n:=\min_{1\leq i\leq k}\si_n^i$$ and $(x_n,0)$ be the corresponding concentration point. For each $1\leq i\leq k$, choose $c_i\geq1$ with $c_{i+1} = c_{i} +6$. 
Then for each $n$, consider the half-annuli of the form $$\mathcal{A}_i:=B_{(c_i+5)\si_n^{-\frac{1}{2}}}^+(x_n,0)\setminus \ov{B^+}_{c_i\si_n^{-\frac{1}{2}}}(x_n,0).$$ Since excluding $(x_n,0)$, there are $k-1$ concentration points and $\ov{\mathcal{A}_i} \cap \ov{\mathcal{A}_j} = \{ \cdot \}$ for $1\le i \neq j \le k$, for each $n$ we can find an annulus whose f{}lat boundary $\pa\AA_i\cap\{y=0\}$ does not contain any concentration points. Since there are only $k$ many $c_i$'s, one $c_i$ will be chosen infinitely many times. Further, using Proposition \ref{PS}, $\tilde{I}(\Psi^i) \ge \frac{1}{N}S^{\frac{N}{2}}$ and $\tilde{I}_{\la}(U_n) \le C$ for some $C>0$, we get for large $n$, 
\begin{align*}
    C + |\tilde{I}_{\la}(U_{\infty})| \ge \frac{k}{N}S^{\frac{N}{2}},
\end{align*}
which infers that $k$ is bounded by a fixed constant. Hence passing through a subsequence, we find $\ov{C}\geq1$, independent of $n$, such that for any $n$, the f{}lat boundary of the annulus $$\AA_n^1:=\left(B^+_{(\ov{C}+5)\si_n^{-\frac{1}{2}}}(x_n,0)\setminus \ov{B^+}_{\ov{C}\si_n^{-\frac{1}{2}}}(x_n,0)\right)\cap(\Om\times(0,\infty)),$$ does not contain any concentration points $(x_n^i,0)$.
We further define the following thinner subsets of $\AA_n^1$:
\begin{align*}
    \AA_n^2:=\left(B^+_{(\ov{C}+4)\si_n^{-\frac{1}{2}}}(x_n,0)\setminus \ov{B^+}_{(\ov{C}+1)\si_n^{-\frac{1}{2}}}(x_n,0)\right)\cap(\Om\times(0,\infty)),\\
    \AA_n^3:=\left(B^+_{(\ov{C}+3)\si_n^{-\frac{1}{2}}}(x_n,0)\setminus \ov{B^+}_{(\ov{C}+2)\si_n^{-\frac{1}{2}}}(x_n,0)\right)\cap(\Om\times(0,\infty)).
\end{align*}

\subsection{Estimates under auxiliary norm}\label{estsunderauxnorm}
We recall an elementary inequality: Let $q \in [2,2^*]$. Then there exists $A(\lambda)>1$ independent of $q$ such that  
\begin{align}\label{ineq-1}
    \left| |t|^{q-2}t+\la |t|^{p-2}t \right| \leq A|t|^{2^*-1}+A, \text{ for all } t\in\R.
\end{align}
We consider the following superset of $\Omega$:
\begin{enumerate}[label={($\bf O_1$)}]
\item \label{O_1}  Let $\Tilde{\Omega} \supsetneq \Omega$ be such that $\text{dist}(\Om,\pa\Tilde{\Om})>0$.
\end{enumerate}
Let $u_n$ be solutions of \eqref{sub_pro} uniformly bounded in $X_0(\Om)$. For each $n \in \N$, let $v_n\in X_0(\tilde{\Om})$ weakly solve the following problem (whose existence and uniqueness are guaranteed by the Riesz representation theorem):
\begin{equation}\label{PDE1}
    \del u+\fra u=A|u_n|^{2^*-1}+A \text{ in } \Tilde{\Om}, \; u=0\text{ in } \RR \setminus \Tilde{\Om}.
\end{equation}
Taking $v_n^-\in X_0(\tilde{\Om})$ as a test function in the weak formulation, 
$$-\rho(v_n^-)^2-\iint_{\R^{2N}}[v_n^+(x)v_n^-(y)+v_n^+(y)v_n^-(x)]\d\mu=\int_{\Om}(A|u_n|^{2^*-1}+A)v_n^-\dx.$$
Hence, $v_n^-\equiv0$ and $v_n$ is a non-negative solution. Applying the strong maximum principle \cite[Theorem 3.1]{BiMuVe}, $v_n$ is a positive solution. Now, for any $0\leq \phi\in \mathcal{C}_c^{\infty}(\Om)$,
\begin{align*}
    \int_{\Om}\nabla v_n\cdot\nabla\phi\dx+\A(v_n,\phi)&=\int_{\Tilde{\Om}}\nabla v_n\cdot\nabla\phi\dx+\A(v_n,\phi)=\int_{\Om}(A|u_n|^{2^*-1}+A)\phi\dx\\& \geq \int_{\Om}(|u_n|^{p_n-2}u_n+\la |u_n|^{p-2}u_n)\phi\dx=\int_{\Om}\nabla u_n\cdot\nabla\phi\dx+\A(u_n,\phi).
\end{align*}
Similarly, we see that
$$\int_{\Om}\nabla v_n\cdot\nabla\phi\dx+\A(v_n,\phi)\geq \int_{\Om}\nabla (-u_n)\cdot\nabla\phi\dx+\A(-u_n,\phi).$$
Since $|u_n|\leq v_n$ a.e. in $\RR\setminus\Om$, using the weak comparison principle \cite[Proposition 4.1]{AnCo},
\begin{align}\label{est-1}
    |u_n|\leq v_n \; \text{ a.e. in }\Om.
\end{align}
In view of \eqref{est-1}, it is sufficient to estimate $v_n$. 

\begin{remark}\label{Ap11-1}
    Since $\{u_n\}$ is uniformly bounded in $X_0(\Om)$, it is is also uniformly bounded in $L^{2^*}(\Omega)$ and therefore
\begin{align*}
    \rho(v_n)^2=A\int_{\tilde{\Om}}|u_n|^{2^*-1}v_n+Av_n\dx\leq (A\|u_n\|_{2^*}^{2^*-1}+A|\Tilde{\Om}|^{2^*-1})\|v_n\|_{2^*, \Tilde{\Om}}\leq C\rho(v_n).
\end{align*}
Hence $\{v_n\}$ is also uniformly bounded in $X_0(\tilde{\Om})$.
\end{remark}

\begin{definition}\label{def-1}
    Let $p_1,p_2\in[1,\infty)$ be such that $p_2<2^*<p_1$, $\al>0$ and $\si>0$. We consider a system of inequalities
    \begin{equation}\label{ineq_sys}
        \begin{cases}
            \|u_1\|_{p_1}\leq\al,\\
            \|u_2\|_{p_2}\leq\al\si^{\frac{N}{2^*}-\frac{N}{p_2}}.
        \end{cases}
    \end{equation}
We define the following norm on $X_0$:
    $$\|u\|_{p_1,p_2,\si}=\inf\left\{\al>0:\exists \, u_1,u_2 \text{ satisfying }\eqref{ineq_sys} \text{ and }|u|\leq u_1+u_2 \right\}.$$
\end{definition}
Notice that 
\begin{align}\label{ineq-2}
    \|u\|_{p_1,p_2,\si}\leq \min\left\{\|u\|_{p_1},\|u\|_{p_2}\si^{\frac{N}{p_2}-\frac{N}{2^*}}\right\}.
\end{align}
\begin{lemma}\label{lemma_11}
    Let $\tilde{\Omega}$ be as given in {\rm \ref{O_1}}, $u,v\in X_0(\tilde{\Om})$,  and $a\in L^{\frac{N}{2}}(\Tilde{\Om})$ be three positive functions such that the following holds weakly: $$\del u+\fra u\leq av \text{ in } \Tilde{\Om}, \; u= 0 \text{ in } \RR \setminus \Tilde{\Om}.$$
    Then for each $p_1,p_2\in(\frac{N}{N-2},\infty)$ there exist $C(N,p_1,p_2)$ such that for any $\si>0$, $$\|u\|_{p_1,p_2,\si}\leq C(N,p_1,p_2)\|a\|_{\frac{N}{2}}\|v\|_{p_1,p_2,\si}.$$
\end{lemma}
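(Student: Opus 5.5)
The plan is to reduce to the Green function (Riesz potential) bound for $-\Delta$ on $\R^N$, and then use the standard Hardy--Littlewood--Sobolev plus H\"older argument of Devillanova--Solimini for the two-component norm. The reduction is a comparison argument. Let $w\in X_0(\tilde\Om)$ be the weak solution of $-\Delta w+\fra w=av$ in $\tilde\Om$, $w=0$ outside. It exists by Riesz representation, since $av\in L^{(2^*)'}$ by H\"older whenever $a\in L^{N/2}$ and $v\in L^{2^*}$. By the weak comparison principle \cite[Proposition 4.1]{AnCo}, $u\le w$. Since $av\ge0$, the same test-with-$w^-$ computation used for $v_n$ above gives $w\ge0$. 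Hence $0\le u\le w$, and it suffices to bound $\|w\|_{p_1,p_2,\si}$, because the norm is monotone in $|u|$.

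The next step is to dominate $w$ by the Newtonian potential $G*(av\chi_{\tilde\Om})$, where $G(x)=c_N|x|^{2-N}$. The idea is that the mixed operator has a Green function on $\tilde\Om$ bounded by the Newtonian one. To do this comparison-style, set $z:=G*(av)\ge0$, which satisfies $-\Delta z=av$ in $\R^N$. Then compute $\fra z\ge0$ in $\tilde\Om$ where needed. Since $z$ is a potential of a nonnegative measure, $\fra z=G_{1-s}*(av)$ up to constants, which is nonnegative. So $-\Delta z+\fra z\ge av$ in $\tilde\Om$, and $z\ge0$ outside $\tilde\Om$. The comparison principle then gives $w\le z$. \textbf{This is the main obstacle.} Since $z\notin X_0(\tilde\Om)$, I must justify the comparison principle for supersolutions that are merely locally in $H^1$ with positive exterior values. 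I would do this by truncation/approximation: first take $av$ replaced by a bounded function $\min\{a,k\}\min\{v,k\}$, where $z$ is smooth enough, and test with $(w-z)^+\in X_0(\tilde\Om)$; then let $k\to\infty$ by monotone convergence. If justifying $\fra z\ge0$ is delicate, an alternative is to use the Green function $G_{\tilde\Om}$ of the mixed operator and cite a pointwise bound $G_{\tilde\Om}(x,\xi)\le C|x-\xi|^{2-N}$, which follows from the heat kernel bound of the mixed operator being dominated by the Gaussian.

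Finally comes the potential estimate. Take an admissible decomposition $v\le v_1+v_2$ with $\|v_1\|_{p_1}\le\al$ and $\|v_2\|_{p_2}\le\al\si^{N/2^*-N/p_2}$. Then
\[
z\le G*(av_1)+G*(av_2)=:z_1+z_2 .
\]
For $q>\frac{N}{N-2}$, H\"older gives $\|av_i\|_r\le\|a\|_{N/2}\|v_i\|_q$ with $\frac1r=\frac2N+\frac1q<1$. HLS gives $\|G*f\|_q\le C\|f\|_r$, because $\frac1q=\frac1r-\frac2N$. Hence $\|z_1\|_{p_1}\le C\|a\|_{N/2}\al$ and $\|z_2\|_{p_2}\le C\|a\|_{N/2}\al\si^{N/2^*-N/p_2}$. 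Both are valid precisely because $p_1,p_2>\frac N{N-2}$. Therefore $\|z\|_{p_1,p_2,\si}\le C\|a\|_{N/2}\al$, with $C=C(N,p_1,p_2)$ independent of $\si$. Taking the infimum over $\al$ and chaining with $u\le w\le z$ gives the claim. Note that the hypothesis $p_2<2^*<p_1$ from Definition~\ref{def-1} plays no role in this step; only the scale invariance of HLS matters, which is why the factor $\si^{N/2^*-N/p_2}$ passes through unchanged.
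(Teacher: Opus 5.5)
Your argument is correct, but it is organized differently from the paper's. The paper splits first. For an admissible $v\le v_1+v_2$ it solves $-\Delta u_i+(-\Delta)^s u_i=av_i$ in $\tilde{\Om}$ and bounds $\|u_i\|_{p_i}\le C\|a\|_{N/2}\|v_i\|_{p_i}$ by the Moser-type estimate of Lemma~\ref{Moser_lemma}(ii), which tests the mixed equation with powers of $u_i$ and discards the nonnegative nonlocal term. It then gets $u\le u_1+u_2$ from the weak comparison principle.

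You compare only once, $u\le w\le z=G*(av)$. In effect this says the Dirichlet Green function of $-\Delta+(-\Delta)^s$ on $\tilde{\Om}$ is dominated by the Newtonian kernel; the nonlocal part only helps, since $(-\Delta)^s z=c\,|x|^{2-2s-N}*(av)\ge 0$. The splitting is then done on an explicit positive kernel, and H\"older plus HLS finish.

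What your route buys:
\begin{itemize}
\item HLS gives the whole range $q>\frac{N}{N-2}$ at once. For target exponents below $2^*$, which you need for $p_2$, a Moser testing argument would need sub-unit powers or a duality argument.
\item You never solve a PDE with right-hand side $av_i$, so you need nothing about $v_i$ beyond $0\le v_i\in L^{p_i}$.
\end{itemize}
The price is a comparison against a supersolution that is not in $X_0(\tilde{\Om})$. You isolate this correctly and handle it by truncation. In that step, $z$ need not lie in $\dot H^s(\R^N)$ (for instance $N=3$, $s\le\frac12$). So read $\A(z,\phi)$ as $\int(-\Delta)^s z\,\phi$ for $\phi$ supported in $\tilde{\Om}$; this is fine because $z=O(|x|^{2-N})$ at infinity.

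The paper's route stays inside the $X_0$ weak framework and reuses Lemma~\ref{Moser_lemma}, which it needs anyway in Lemmas~\ref{lemma_12} and~\ref{lemma_13}.

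One correction: your fallback justification is wrong as stated. The heat kernel of $-\Delta+(-\Delta)^s$ is not dominated by a Gaussian, because the jump part produces polynomial tails of order $t|x|^{-N-2s}$. You do not need it, since your comparison with $z$ already gives $G_{\tilde{\Om}}(x,\xi)\le c_N|x-\xi|^{2-N}$.
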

\begin{proof}
For $\si,\eps>0$ and $\al=\|v\|_{p_1,p_2,\si}+\eps$, there exist $v_1,v_2\in X_0(\tilde{\Om})$ such that $v\leq v_1+v_2$, where $v_1,v_2$ satisfy \eqref{ineq_sys}. For $i=1,2$, let $u_i$ weakly solve the problem $$\del u_i+\fra u_i=av_i \text{ in } \Tilde{\Om}, \; u_i= 0 \text{ in } \RR \setminus \Tilde{\Om}.$$
    By Lemma \ref{Moser_lemma}-(ii),
    \begin{align*}
        \|u_1\|_{p_1}&\leq C(N,p_1)\|a\|_{\frac N2}\|v_1\|_{p_1}\leq C(N,p_1,p_2)\|a\|_{\frac N2}(\|v\|_{p_1,p_2,\si}+\eps),\\
        \|u_2\|_{p_2}&\leq C(N,p_2)\|a\|_{\frac N2}\|v_2\|_{p_2}\leq C(N,p_1,p_2)\|a\|_{\frac N2}(\|v\|_{p_1,p_2,\si}+\eps)\si^{\frac{N}{2^*}-\frac{N}{p_2}}.
    \end{align*}
    Further, since
    \begin{align*}
        \left(\del u_1 +\fra u_1\right) + \left(\del u_2 + \fra u_2\right) =av_1+av_2\geq av\geq\del u + \fra u \text{ in } \Tilde{\Om},
    \end{align*}
     using the weak comparison principle \cite[Proposition 4.1]{AnCo}, $u\leq u_1+u_2$. By the definition of $\|\cdot\|_{p_1,p_2,\si}$ with $\al=C(N,p_1,p_2)\|a\|_{\frac N2}(\|v\|_{p_1,p_2,\si}+\eps)$ and the above argument, for every $\eps>0$, $$\|u\|_{p_1,p_2,\si}\leq C(N,p_1,p_2)\|a\|_{\frac N2}(\|v\|_{p_1,p_2,\si}+\eps).$$ Taking $\eps\ra 0$, concludes the lemma. 
\end{proof}
\begin{lemma}\label{lemma_12}
    Let $p_1,p_2\in\left(\frac{N+2}{N-2},\frac{N}{2}\frac{N+2}{N-2}\right)$ with $p_2<2^*<p_1$. Let, $q_i$ be defined as $$\frac1{q_i}=\frac{N+2}{N-2}\frac{1}{p_i}-\frac2N, \quad i=1,2.$$
     Let $\tilde{\Omega}$ be as given in {\rm \ref{O_1}}. Suppose $u$ and $v$ are two positive functions whose support is contained in $\tilde{\Om}$ and the following holds weakly: 
    $$\del u+\fra u\leq Av^{2^*-1}+A  \text{ in } \Tilde{\Om}, \; u=0\text{ in } \RR \setminus \Tilde{\Om}.$$ Then there exists $C(N,p_1,p_2,\Tilde{\Om})$ such that for every $\si>0$, $$\|u\|_{q_1,q_2,\si}\leq C(N,p_1,p_2,\Tilde{\Om}) \left((\|v\|_{p_1,p_2,\si})^{\frac{N+2}{N-2}}+1\right).$$
\end{lemma}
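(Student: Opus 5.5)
The plan is to follow the scheme of Lemma \ref{lemma_11}, but to split the right-hand side according to the decomposition of $v$ instead of using a fixed linear potential. Fix $\eps>0$ and set $\al=\|v\|_{p_1,p_2,\si}+\eps$. Choose $v_1,v_2\ge 0$ with $v\le v_1+v_2$, $\|v_1\|_{p_1}\le\al$ and $\|v_2\|_{p_2}\le \al\si^{\frac N{2^*}-\frac N{p_2}}$; we may assume both are supported in $\tilde\Om$, after multiplying by $\chi_{\tilde\Om}$. Write $\theta:=2^*-1=\frac{N+2}{N-2}$. The elementary convexity bound
\[
v^{\theta}\le (v_1+v_2)^{\theta}\le 2^{\theta-1}\bigl(v_1^{\theta}+v_2^{\theta}\bigr)
\]
then gives, weakly in $\tilde\Om$,
\[
\del u+\fra u\le A2^{\theta-1}v_1^{\theta}+A2^{\theta-1}v_2^{\theta}+A .
\]

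Next, let $w_1,w_2,w_3\in X_0(\tilde\Om)$ be the weak solutions of $\del w+\fra w=f$ in $\tilde\Om$, $w=0$ outside, with right-hand sides $f_1=A2^{\theta-1}v_1^\theta$, $f_2=A2^{\theta-1}v_2^\theta$ and $f_3=A$. Each $w_i$ exists and is nonnegative by the Riesz representation theorem and the argument used for $v_n$ above. The weak comparison principle \cite[Proposition 4.1]{AnCo} then yields $u\le w_1+w_2+w_3$.

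The key estimate is the $L^r\to L^q$ regularizing bound for the mixed operator with $\frac1q=\frac1r-\frac2N$, which I expect is the appropriate item of Lemma \ref{Moser_lemma} in the appendix. It is Calderón–Zygmund/Sobolev type, obtained by Moser iteration or comparison with the Laplacian Green function, and it requires $1<r<\frac N2$. Here $f_i\in L^{p_i/\theta}$ with $r_i=p_i/\theta$. The hypotheses $p_i\in(\theta,\frac N2\theta)$ are exactly $r_i\in(1,\frac N2)$, and then $\frac1{q_i}=\frac{\theta}{p_i}-\frac2N$ matches the definition. This gives
\[
\|w_1\|_{q_1}\le C\|v_1\|_{p_1}^{\theta}\le C\al^\theta,\qquad \|w_2\|_{q_2}\le C\al^\theta\si^{\theta(\frac N{2^*}-\frac N{p_2})}.
\]
Scaling is consistent: $\frac N{2^*}-\frac N{q_2}=\frac N{2^*}-\frac{N\theta}{p_2}+2$. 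Since $\theta\frac N{2^*}=\frac{N+2}2=\frac N{2^*}+2$, this equals $\theta(\frac N{2^*}-\frac N{p_2})$. Hence $\|w_2\|_{q_2}\le C\al^\theta\si^{\frac N{2^*}-\frac N{q_2}}$, as required in \eqref{ineq_sys}.

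For $w_3$, boundedness on the bounded set $\tilde\Om$ (again from Lemma \ref{Moser_lemma}, or an $L^\infty$ bound) gives $\|w_3\|_{q_1}\le C(\tilde\Om)$. I place $w_3$ in the $q_1$ component; note that $q_1>2^*$ because $p_1>2^*$. Setting $\beta=C(\al^\theta+1)$, the pair $u_1=w_1+w_3$, $u_2=w_2$ satisfies \eqref{ineq_sys} with $\beta$ and indices $(q_1,q_2)$. We also need $q_2<2^*<q_1$, which follows from the same identity since $p_2<2^*<p_1$. Thus $\|u\|_{q_1,q_2,\si}\le C((\|v\|_{p_1,p_2,\si}+\eps)^\theta+1)$, and letting $\eps\to0$ finishes the proof.

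The main obstacle is the linear estimate $\|w\|_{q}\le C\|f\|_{r}$ for $-\Delta+(-\Delta)^s$ on $\tilde\Om$ with a constant independent of everything except $N,r$ (and $\tilde\Om$). If Lemma \ref{Moser_lemma} only provides the version with potential $a\in L^{N/2}$, I would write $f_1=a_1v_1$ with $a_1=A2^{\theta-1}v_1^{\theta-1}$. This fails for $p_1>2^*$, because $a_1\notin L^{N/2}$ uniformly. So I would instead prove the $L^r\to L^q$ bound directly: compare with the Newtonian potential, using positivity of the mixed operator's Green function, which is dominated by the Laplacian one via comparison, and then apply Hardy–Littlewood–Sobolev.
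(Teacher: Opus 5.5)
Your proof is correct and follows the paper's argument: the paper also splits $v\le v_1+v_2$, uses $(a+b)^{\frac{N+2}{N-2}}\le 2^{\frac{4}{N-2}}\bigl(a^{\frac{N+2}{N-2}}+b^{\frac{N+2}{N-2}}\bigr)$, compares $u$ with a sum of auxiliary solutions via the weak comparison principle, and uses the same scaling identity $\bigl(\frac N{2^*}-\frac N{p_2}\bigr)\frac{N+2}{N-2}=\frac N{2^*}-\frac N{q_2}$; the only cosmetic difference is that it puts the constant $A$ into the $v_1$-problem instead of a separate solution $w_3$. The linear estimate you flag as the main obstacle is exactly Lemma \ref{Moser_lemma}(i), applied with exponent $p_i\frac{N-2}{N+2}\in(1,\frac N2)$, so the Green-function/Hardy--Littlewood--Sobolev fallback is not needed.
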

\begin{proof}
For $\si,\eps>0$ and $\al=\|v\|_{p_1,p_2,\si}+\eps$, there exist $v_1,v_2\in X_0(\tilde{\Om})$ such that $v\leq v_1+v_2$, where $v_1,v_2$ satisfy \eqref{ineq_sys}. For $i=1,2$, let $u_1,u_2$ weakly solve
    \begin{align*}
        \del u_1+\fra u_1&=2^{\frac{4}{N-2}}Av_1^{\frac{N+2}{N-2}}+A  \text{ in } \Tilde{\Om}, \; u_1=0\text{ in } \RR \setminus \Tilde{\Om},\\
        \del u_2+\fra u_2&= 2^{\frac{4}{N-2}}Av_2^{\frac{N+2}{N-2}} \text{ in } \Tilde{\Om}, \; u_2=0\text{ in } \RR \setminus \Tilde{\Om}.
    \end{align*}
Since $$\del u+\fra u\leq Av^{\frac{N+2}{N-2}}+A\leq 2^{\frac{4}{N-2}}Av_1^{\frac{N+2}{N-2}}+A+2^{\frac{4}{N-2}}Av_2^{\frac{N+2}{N-2}}=\del u_1+\fra u_1\del u_2+\fra u_2,$$
using the weak comparison principle $u\leq u_1+u_2$. Further, notice that 
\begin{align*}
    1<p_1\frac{N-2}{N+2}<\frac N2, \text{ and } \frac{Np_1\frac{N-2}{N+2}}{N-2p_1\frac{N-2}{N+2}}=q_1.
\end{align*}
Hence using Lemma \ref{Moser_lemma}-(i),
\begin{align*}
    \|u_1\|_{q_1}&\leq C(N,p_1)\|2^{\frac{4}{N-2}}Av_1^{\frac{N+2}{N-2}}+A\|_{p_1\frac{N-2}{N+2}}\leq C(N,p_1)\left(\|v_1\|_{p_1}^{\frac{N+2}{N-2}}+|\Om|^{\frac1{p_1}\frac{N+2}{N-2}}\right)\\
    &\leq C(N,p_1,\tilde{\Om})\left((\|v\|_{p_1,p_2,\si}+\eps)^{\frac{N+2}{N-2}}+1\right).
\end{align*}
Similarly, we have
\begin{align*}
    \|u_2\|_{q_2}\leq C(N,p_2)\|v_2\|_{p_2}^{\frac{N+2}{N-2}}&\leq C(N,p_2)(\|v\|_{p_1,p_2,\si}+\eps)^{\frac{N+2}{N-2}}\si^{\left(\frac N{2^*}-\frac N{p_2}\right)\frac{N+2}{N-2}}\\
    &=C(N,p_2)(\|v\|_{p_1,p_2,\si}+\eps)^{\frac{N+2}{N-2}}\si^{\left(\frac N{2^*}-\frac N{q_2}\right)},
\end{align*}
where in the last equality, we use the fact that $$\left(\frac N{2^*}-\frac N{p_2}\right)\frac{N+2}{N-2}=\frac N{2^*}-\frac N{q_2}.$$
Hence, in view of the  definition of $\|\cdot\|_{p_1,p_2,\si}$, with $\al= C(N,p_1,p_2,\Tilde{\Om})((\|v\|_{p_1,p_2,\si}+\eps)^{\frac{N+2}{N-2}}+1)$, we get $$\|u\|_{q_1,q_2,\si}\leq C(N,p_1,p_2,\Tilde{\Om})\left((\|v\|_{p_1,p_2,\si}+\eps)^{\frac{N+2}{N-2}}+1\right).$$
Taking $\eps\to0$, we conclude the proof.
\end{proof}
\begin{lemma}\label{lemma_13}
Let $\{u_n\}$ be a sequence of weak solutions to \eqref{sub_pro} and uniformly bounded in $X_0(\Om)$. Assume, $\{v_n\}$ is a sequence of weak solutions to \eqref{PDE1}. Then there exist $C>0$, and $p_1,\, p_2\in (\frac{N+2}{N-2}, \frac{N}{2}\frac{N+2}{N-2})$ with $p_2<2^*<p_1$ such that $\|v_n\|_{p_1,p_2,\si_n}\leq C.$
\end{lemma}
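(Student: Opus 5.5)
Following Devillanova--Solimini, I would split the right-hand side of \eqref{PDE1} into a part that is small in $L^{N/2}$ and a part that is controlled using the Palais--Smale decomposition of Proposition~\ref{PS}. Then Lemma~\ref{lemma_11} and Lemma~\ref{lemma_12} give a fixed-point-type bound in the auxiliary norm $\|\cdot\|_{p_1,p_2,\si_n}$.

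\textbf{Step 1 (splitting $u_n$).} By Proposition~\ref{PS}, write $u_n=u_\infty+\sum_{i=1}^k\Psi_n^i+r_n$ with $r_n\to0$ in $\DD^{1,2}(\RR)$, hence in $L^{2^*}$. Each bubble satisfies $\Psi^i\in L^{q}$ for all $q>\frac{N}{N-2}$, and $|\Psi^i|\le C(1+|x|)^{2-N}$. Rescaling gives
\[
\|\Psi_n^i\|_{q}=(\si_n^i)^{\frac N{2^*}-\frac Nq}\|\Psi^i\|_q .
\]
For $q=p_2<2^*$ the exponent is negative, and since $\si_n^i\ge\si_n$ this is at most $\si_n^{\frac N{2^*}-\frac N{p_2}}\|\Psi^i\|_{p_2}$. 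Hence $\|\Psi^i_n\|_{p_1,p_2,\si_n}\le C$ by \eqref{ineq-2}. The weak limit $u_\infty$ solves \eqref{main_PDE}, so by standard regularity for the mixed operator it is bounded; thus $\|u_\infty\|_{p_1}\le C$. The remainder $r_n$ is only small in $L^{2^*}$, and it cannot be put into the auxiliary norm directly. This is the main obstacle.

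\textbf{Step 2 (absorbing the remainder).} Write
\[
A|u_n|^{2^*-1}\le A|u_n|^{2^*-2}\,\big(|u_\infty|+\textstyle\sum_i|\Psi_n^i|\big)+a_n|r_n|,
\qquad a_n:=A|u_n|^{2^*-2}\in L^{N/2}.
\]
The coefficient $a_n$ is not small in $L^{N/2}$ near the bubbles. So I would instead use the pointwise bound $|u_n|^{2^*-1}\le C\big(|r_n|^{2^*-1}+|u_\infty|^{2^*-1}+\sum_i|\Psi_n^i|^{2^*-1}\big)$ and decompose $v_n\le w_n+z_n$ accordingly, where:
\begin{itemize}
\item $w_n$ solves $\del w+\fra w=C\big(|u_\infty|^{2^*-1}+\sum_i|\Psi_n^i|^{2^*-1}\big)+A$ in $\tilde\Om$;
\item $z_n$ solves $\del z+\fra z=C|r_n|^{2^*-2}|r_n|$ in $\tilde\Om$.
\end{itemize}
The ordering $v_n\le w_n+z_n$ follows from the weak comparison principle. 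For $w_n$, apply Lemma~\ref{lemma_12} with $v=|u_\infty|+\sum_i|\Psi_n^i|$. Its auxiliary norm is bounded by Step 1, provided $p_1,p_2$ are chosen in $(\frac{N+2}{N-2},\frac N2\frac{N+2}{N-2})$ with $p_2<2^*<p_1$ and $p_2>\frac{N}{N-2}$, which is possible for $N\ge3$. This gives $\|w_n\|_{q_1,q_2,\si_n}\le C$.

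For $z_n$ I would go back to $v_n$ itself and use the sharper comparison
\[
\del v_n+\fra v_n\le a\,v_n+(\text{the }w\text{-forcing}),\qquad a:=C|r_n|^{2^*-2},
\]
which holds because $|r_n|\le v_n+|u_\infty|+\sum_i|\Psi_n^i|$. Since $\|a\|_{N/2}=C\|r_n\|_{2^*}^{2^*-2}\to0$, Lemma~\ref{lemma_11} applied to the part driven by $a v_n$ gives a contribution $\le \tfrac12\|v_n\|_{p_1,p_2,\si_n}$ for large $n$. This yields
\[
\|v_n\|_{p_1,p_2,\si_n}\le C+\tfrac12\|v_n\|_{p_1,p_2,\si_n}.
\]
Absorbing is legitimate once we know the norm is finite. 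For fixed $n$ this follows from $v_n\in L^{p_1}$, which holds by regularity since $u_n$ solves a subcritical or critical problem.

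\textbf{Step 3 (matching exponents).} Lemma~\ref{lemma_12} outputs the $(q_1,q_2)$-norm, while Lemma~\ref{lemma_11} works with the $(p_1,p_2)$-norm. To close the loop I would either take $(q_1,q_2)$ as the final pair, iterating Lemma~\ref{lemma_12} finitely many times, or check the monotonicity $\|\cdot\|_{p_1,p_2,\si}\le C|\tilde\Om|^{\cdots}\|\cdot\|_{q_1,q_2,\si}$. The latter holds because $q_i>p_i$ (since $p_i>\frac{N+2}{N-2}$) and supports lie in the bounded set $\tilde\Om$, so H\"older applies to each piece $u_1,u_2$ separately and the powers of $\si$ match. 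I expect the delicate point to be arranging the comparison in Step 2 so that only an $L^{N/2}$-small coefficient multiplies $v_n$.
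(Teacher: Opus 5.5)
Your overall architecture is sound and close to the paper's: PS decomposition, bubbles controlled through the $p_2$-piece using $\sigma_n^i\ge\sigma_n$, $u_\infty\in L^\infty$, and absorption of the $L^{2^*}$-small remainder through Lemma~\ref{lemma_11} with $|u_n|\le v_n$. However, the closing Step~3 rests on a false claim, and Step~2 drops a term.

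\textbf{Step~3.} Lemma~\ref{lemma_12} \emph{widens} the exponent interval. Since $\frac1{q_i}-\frac1{2^*}=\frac{N+2}{N-2}\bigl(\frac1{p_i}-\frac1{2^*}\bigr)$, you get $q_2<p_2<2^*<p_1<q_1$, not $q_i>p_i$. So the inequality $\|\cdot\|_{p_1,p_2,\sigma}\le C\|\cdot\|_{q_1,q_2,\sigma}$ fails. For the second piece you would need to bound an $L^{p_2}$ norm by an $L^{q_2}$ norm with $q_2<p_2$, and H\"older on $\tilde\Omega$ does not give that. Already for fixed $\sigma$ the two auxiliary norms are equivalent to the $L^{p_2}$ and $L^{q_2}$ norms on $\tilde\Omega$, and $L^{q_2}\not\subset L^{p_2}$.

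As written, then, your bound for $w_n$ (in the $(q_1,q_2)$-norm) and your absorption inequality (in the $(p_1,p_2)$-norm) do not combine. ``Iterating Lemma~\ref{lemma_12}'' is not what is needed. The repair uses tools already in the paper, in either of two ways:
\begin{itemize}
\item Carry out the whole absorption in the $(q_1,q_2)$-norm. Lemma~\ref{lemma_11} holds for any exponents above $\frac N{N-2}$, and $q_2>\frac{2^*}{2}$. Choose $p_1,p_2$ close to $2^*$ so that $q_1,q_2$ still lie in $(\frac{N+2}{N-2},\frac N2\frac{N+2}{N-2})$.
\item Pass from $(q_1,q_2)$ back to $(p_1,p_2)$ with the interpolation of Remark~\ref{interpol_aux_norm}. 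This is available because the forcing $C(|u_\infty|^{2^*-1}+\sum_i|\Psi_n^i|^{2^*-1})+A$ is bounded in $L^{(2^*)'}$, so the corresponding solution is bounded in $L^{2^*}$.
\end{itemize}

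\textbf{Step~2.} Your displayed comparison $\del v_n+\fra v_n\le a v_n+(\text{$w$-forcing})$ does not follow from $|r_n|\le v_n+|u_\infty|+\sum_i|\Psi_n^i|$. That bound produces an extra cross term $a\,(|u_\infty|+\sum_i|\Psi_n^i|)$ with $a=C|r_n|^{2^*-2}$. The term is harmless: Lemma~\ref{lemma_11} bounds its contribution by $C\|a\|_{N/2}\bigl\||u_\infty|+\sum_i|\Psi_n^i|\bigr\|_{\cdot,\cdot,\sigma_n}\to0$. But it has to be included.

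\textbf{Comparison with the paper.} The paper avoids both issues by splitting only the potential: $A|u_n|^{2^*-1}\le(a_0+a_1+a_2)|u_n|$, where $a_0,a_1,a_2$ are the $(2^*-2)$-powers of $u_\infty$, $\sum_i\Psi_n^i$ and the remainder. Then:
\begin{itemize}
\item $\GG(a_0|u_n|+A)$ is bounded in $L^{p_1}$ by Lemma~\ref{Moser_lemma}(i);
\item $\GG(a_1|u_n|)$ is bounded in $L^{p_2}$ by $C\sigma_n^{\frac N{2^*}-\frac N{p_2}}$ via Lemma~\ref{Moser_lemma}(iii), using the scaling $\|a_1\|_r\lesssim\sigma_n^{(2r-N)/r}$ and $\|u_n\|_{2^*}\le C$;
\item $\GG(a_2|u_n|)$ is absorbed.
\end{itemize}
Everything stays in one fixed $(p_1,p_2)$-norm, with no use of Lemma~\ref{lemma_12} and no auxiliary-norm bounds on the bubbles themselves.

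On the positive side, you note that the absorption needs $\|v_n\|_{p_1,p_2,\sigma_n}<\infty$ for each fixed $n$, which holds since $u_n\in L^\infty$. That fills a point the paper leaves implicit.
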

\begin{proof}
    From the Palais-Smale decomposition (Proposition \ref{PS}), we
    write $u_n=u_\infty+u_n^1+u_n^2$, where $u_\infty$ weakly solves \eqref{main_PDE}, $$u_{n}^1=\sum_{i=1}^k \Psi_n^i,$$ and $u_{n}^2=u_n-u_\infty-u_n^1$ and $u_n^2\to 0$ in  $L^{2^*}(\tilde{\Om})$. Let $$a_0=A\max(1,3^{\frac{6-N}{N-2}})|u_{\infty}|^{2^*-2},\,a_1=A\max(1,3^{\frac{6-N}{N-2}})|u_n^1|^{2^*-2},\, a_2=A\max(1,3^{\frac{6-N}{N-2}})|u_n^2|^{2^*-2}.$$
    With the above notations, we have $$\del v_n+\fra v_n =A|u_n|^{2^*-1}+A\leq (a_0+a_1+a_2)|u_n|+A.$$
    Let $\GG:X_0(\tilde{\Om})^*\to X_0(\tilde{\Om})$ be the inverse of $\del+\fra$ i.e. $v=\GG(u)$ implies $$\del v+\fra v=u\text{ in }\tilde{\Om},\, v=0\text{ in }\RR\setminus\tilde{\Om}.$$ Observe that $\GG$ is well defined by the Riesz representation theorem. Further, using the weak comparison principle, $u_1\leq u_2$ implies $\GG(u_1)\leq \GG(u_2)$, i.e., $\GG$ is monotone. Hence 
    $$v_n\leq \GG(a_0|u_n|+A)+\GG(a_1|u_n|)+\GG(a_2|u_n|).$$
    Since $u_\infty\in L^{\infty}(\Om)$, $a_0\in L^{\infty}(\Om)$, choosing $p\in (\frac{2N}{N+2},\min\{\frac{2N}{N-2},\frac{N+2}{4}\})\subset (1,\frac{N}{2})$, %which holds because $$\frac{2N}{N-2}<\frac{N}{2}\iff N>6,$$ 
    we get by Lemma \ref{Moser_lemma}-(i), $$\|\GG(a_0|u_n|+A)\|_{p_1}\leq \|a_0|u_n|+A\|_p\leq C,$$ where $p_1=\frac{Np}{N-2p}\in(2^*,\frac{N}{2}\frac{N+2}{N-2})$ and $C>0$ is independent of $n$. Thus using \eqref{ineq-2}, $$\|\GG(a_0|u_n|+A)\|_{p_1,p_2,\si_n}\leq \|\GG(a_0|u_n|+A)\|_{p_1}\leq C.$$
    Choose $r\in(\max\{\frac N4,\frac{2N(N+2)}{N^2+12}\},\frac N2)$ and $p_2$ such that $\frac1{p_2}=\frac1r+\frac1{2^*}-\frac2N$. Then we have $p_2\in(\frac{N+2}{N-2},2^*)$. By Lemma \ref{Moser_lemma}-(iii),
    \begin{align*}
        \|\GG(a_1|u_n|)\|_{p_2}&\leq  C\|a_1\|_r\|u_n\|_{2^*}\leq C\|a_1\|_r\\
        &\leq C\sum_{i=1}^k(\si_n^i)^{\frac{2r-N}{r}}\left(\int_{\RR}\Psi^i(x)^{(2^*-2)r}\dx\right)^{\frac1r}\\
        &\leq Ck\si_n^{\frac{2r-N}{r}}\left(\int_{\RR}\frac{1}{(1+|x|^2)^{2r}}\dx\right)^{\frac1r}\leq C\si_n^{\frac N{2^*}-\frac N{p_2}},
    \end{align*}
    where we use $r>\frac N4$ in the last integral and $\frac{2r-N}{r}=\frac N{2^*}-\frac N{p_2}$ in the exponent of $\si_n$. Hence again using \eqref{ineq-2},
    $$\|\GG(a_1|u_n|)\|_{p_1,p_2,\si_n}\leq \si_n^{\frac{N}{p_2}-\frac{N}{2^*}}\|\GG(a_1|u_n|)\|_{p_2}\leq C.$$
    Next, for $a_2\in L^{\frac{N}{2}}(\tilde{\Om})$, applying Lemma \ref{lemma_11}, $$\|\GG(a_2|u_n|)\|_{p_1,p_2,\si_n}\leq C\|a_2\|_{\frac{N}{2}}\|u_n\|_{p_1,p_2,\si_n}\leq C\|u_n^2\|_{2^*}\|v_n\|_{p_1,p_2,\si_n}\leq \frac12\|v_n\|_{p_1,p_2,\si_n},$$
    where in the last inequality we used $u_n^2\to0$ in $L^{2^*}(\tilde{\Om})$. Hence, the triangle inequality gives 
$$\|v_n\|_{p_1,p_2,\si_n}\leq 2\|\GG(a_0|u_n|+A)\|_{p_1,p_2,\si_n}+2\|\GG(a_1|u_n|)\|_{p_1,p_2,\si_n} \leq C,$$
which is required.
\end{proof}
\begin{remark}\label{interpol_aux_norm}
  Let $\{u_n\}$ and $\{v_n\}$ be as in Lemma~\ref{lemma_13}. We claim that if $\|v_n\|_{p_1,p_2,\si_n}\leq C_1$ for some $p_2<2^*<p_1$ and $C_1>0$ independent of $n$, then for any $p_2<r_2<2^*<r_1<p_1$, $\|v_n\|_{r_1,r_2,\si_n}\leq C_2$ for some $C_2>0$ independent of $n$. Indeed, by the definition of the norm $\|\cdot\|_{p_1,p_2,\si_n}$, there exists $v_{n,i}$ with $|v_n|\leq v_{n,1}+v_{n,2}$ such that $$\|v_{n,1}\|_{p_1}\leq C_1,\quad\|v_{n,2}\|_{p_2}\leq C_1\si_n^{\frac N{2^*}-\frac N{p_2}}.$$
    Set $w_{n,1}=v_{n,1}$ and $w_{n,2}=\min\{v_{n,2},|v_n|\}$. Then $|v_n|\leq w_{n,1}+w_{n,2}$ and $$\|w_{n,1}\|_{r_1}=\|v_{n,1}\|_{r_1}\leq C\|v_{n,1}\|_{p_1}\leq CC_1.$$ Let $\frac1{r_2}=\frac{\theta}{p_2}+\frac{1-\theta}{2^*}$. Observe that
    \begin{align*}
        \|w_{n,2}\|_{r_2}\leq \|w_{n,2}\|_{p_2}^\theta\|w_{n,2}\|_{2^*}^{1-\theta}\leq \|v_{n,2}\|_{p_2}^\theta\|v_n\|_{2^*}^{1-\theta}\leq CC_1\si_n^{(\frac N{2^*}-\frac N{p_2})\theta}=CC_1\si_n^{\frac{N}{2^*}-\frac{N}{r_2}},
    \end{align*}
    where in the last inequality, we use the fact that $\{v_n\}$ is uniformly bounded in $L^{2^*}(\tilde{\Om})$ (see Remark~\ref{Ap11-1}).
\end{remark}
\begin{proposition}\label{new_norm}
    Let $\{u_n\}$ be a sequence of weak solutions to \eqref{sub_pro} and uniformly bounded in $X_0(\Om)$. Then for every $p_1,p_2$ satisfying $\frac{N}{N-2}<p_2<2^*<p_1$, there exists $C>0$ such that $\|u_n\|_{p_1,p_2,\si_n}\leq C.$
\end{proposition}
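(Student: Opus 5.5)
Since $|u_n|\le v_n$ a.e. in $\Om$ by \eqref{est-1}, and the auxiliary norm is monotone in $|u|$ by Definition~\ref{def-1}, we have $\|u_n\|_{p_1,p_2,\si_n}\le\|v_n\|_{p_1,p_2,\si_n}$. It therefore suffices to bound $v_n$ in $\|\cdot\|_{p_1,p_2,\si_n}$ for every pair with $\frac{N}{N-2}<p_2<2^*<p_1$. Lemma~\ref{lemma_13} gives such a bound for one specific pair $(p_1^0,p_2^0)$ with $\frac{N+2}{N-2}<p_2^0<2^*<p_1^0<\frac N2\frac{N+2}{N-2}$. The plan is to enlarge the admissible range in both directions. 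We push $p_1$ upward and $p_2$ downward by iterating Lemma~\ref{lemma_12}. Whenever we want to shrink the range back inside an interval already controlled, we use Remark~\ref{interpol_aux_norm}.

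For the upward/downward push, apply Lemma~\ref{lemma_12} with $u=v=v_n$. This is legitimate because $v_n$ is positive, supported in $\tilde\Om$, and satisfies $\del v_n+\fra v_n=A|u_n|^{2^*-1}+A\le Av_n^{2^*-1}+A$ in $\tilde\Om$, using \eqref{est-1} in $\Om$ and $u_n=0$ outside $\Om$. The lemma turns a bound in $\|\cdot\|_{p_1,p_2,\si_n}$ into a bound in $\|\cdot\|_{q_1,q_2,\si_n}$, where
\[
\frac1{q_i}=\frac{N+2}{N-2}\frac1{p_i}-\frac2N .
\]
The map $t\mapsto\frac{N+2}{N-2}t-\frac2N$ has fixed point $t=\frac1{2^*}$ and slope $>1$. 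Hence $1/p_i-1/2^*$ is multiplied by $\frac{N+2}{N-2}$ at each step: $1/q_1$ moves down away from $1/2^*$ and $1/q_2$ moves up away from it.

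After finitely many steps (each time first using Remark~\ref{interpol_aux_norm} to stay inside the hypothesis window of Lemma~\ref{lemma_12}), the exponents leave that window. For $p_1$: once $p_1$ is close to $\frac N2\frac{N+2}{N-2}$, the source term $v_n^{2^*-1}+1$ lies in $L^{r}$ with $r>\frac N2$. Then Lemma~\ref{Moser_lemma} (the $L^\infty$ case of the Moser-type estimate in the appendix) gives the first component in $L^\infty$, and hence in every $L^{p_1}$ on the bounded set $\tilde\Om$. For $p_2$: lowering $p_2$ toward $\frac{N+2}{N-2}$ reduces the source exponent $p_2\frac{N-2}{N+2}$ toward $1$. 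Lemma~\ref{Moser_lemma} in its $L^1\to L^{q}$, $q<\frac N{N-2}$ form (weak-type) then reaches any $q_2>\frac N{N-2}$, while the $\si_n$-scaling identity $\left(\frac N{2^*}-\frac N{p_2}\right)\frac{N+2}{N-2}=\frac N{2^*}-\frac N{q_2}$ is preserved. The $\si_n$ exponent for such $p_2$ follows by the same homogeneity as in the proof of Lemma~\ref{lemma_12}.

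Finally, for an arbitrary target pair, we have now controlled some $\tilde p_2\le p_2$ and $\tilde p_1\ge p_1$, and Remark~\ref{interpol_aux_norm} yields the bound for $(p_1,p_2)$. The main obstacle is the endpoint step for $p_2$ near $\frac N{N-2}$. There the source $v_{n,2}^{2^*-1}$ is only in $L^{1+}$, so Lemma~\ref{Moser_lemma} must be invoked in its borderline form. One also has to check that the constants stay independent of $\si_n$, and that the decomposition $v\le v_1+v_2$ survives the nonlinearity via $(a+b)^{2^*-1}\le 2^{\frac4{N-2}}(a^{2^*-1}+b^{2^*-1})$, exactly as in Lemma~\ref{lemma_12}.
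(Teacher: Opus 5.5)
Your core mechanism is the paper's. You pass from $u_n$ to $v_n$ via $|u_n|\le v_n$, seed with Lemma~\ref{lemma_13}, and iterate Lemma~\ref{lemma_12} with $u=v=v_n$. You use Remark~\ref{interpol_aux_norm} to shrink back into the window $\bigl(\frac{N+2}{N-2},\frac N2\frac{N+2}{N-2}\bigr)$, while $|1/p_i-1/2^*|$ grows by the factor $\frac{N+2}{N-2}$ at each step. The trouble is your endpoint paragraph, where the steps as written are not justified:
\begin{itemize}
\item Lemma~\ref{Moser_lemma} contains no $L^\infty$ case and no weak-type $L^1$ case; part (i) covers only source exponents in $(1,\frac N2)$.
\item Your claim that for $p_1$ close to $\frac N2\frac{N+2}{N-2}$ the source lies in $L^r$ with $r>\frac N2$ is false as long as $p_1$ is below that value, since then $r=p_1\frac{N-2}{N+2}<\frac N2$. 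Also, it is the component $v_{n,1}^{2^*-1}$, not $v_n^{2^*-1}$, that carries this integrability.
\item An $L^1\to L^q$ bound with $q<\frac N{N-2}$ cannot produce an exponent $q_2>\frac N{N-2}$ with the correct power of $\si_n$. The scaling identity you quote holds only along the Sobolev line $\frac1{q_2}=\frac{N+2}{N-2}\frac1{p_2}-\frac2N$.
\end{itemize}

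These steps are unnecessary, and the ``main obstacle'' you identify does not exist. The proposition only asks for $p_2>\frac N{N-2}$ and $p_1<\infty$, both open conditions. Under the exponent map of Lemma~\ref{lemma_12}, $q_1\to\infty$ as $p_1\uparrow\frac N2\frac{N+2}{N-2}$, and $q_2\downarrow\frac{2^*}{2}=\frac N{N-2}$ as $p_2\downarrow\frac{N+2}{N-2}$. So once the iteration controls pairs arbitrarily close to both ends of the window, the rest is routine. Given a target $(p_1,p_2)$, pick $(\tilde p_1,\tilde p_2)$ strictly inside the window whose images satisfy $\tilde q_1\ge p_1$ and $\tilde q_2\le p_2$. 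Apply Lemma~\ref{lemma_12} once more; this uses only Lemma~\ref{Moser_lemma}(i) with source exponent strictly between $1$ and $\frac N2$. Finish with Remark~\ref{interpol_aux_norm}. This is exactly how the paper concludes, and replacing your last two paragraphs by this observation makes your proof complete.
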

\begin{proof}
    Recall that $|u_n|\leq v_n$. Thus, $v_n$ satisfies $$\del u+\fra u\leq Av_n^{2^*-1}+A  \text{ in } \Tilde{\Om}, \; u=0\text{ in } \RR \setminus \Tilde{\Om}.$$ Applying Lemma \ref{lemma_13}, we obtain $p_1,p_2\in(\frac{N+2}{N-2},\frac N2\frac{N+2}{N-2})$ with $p_2<2^*<p_1$ such that $\|v_n\|_{p_1,p_2,\si_n}\leq C$. Now applying Lemma \ref{lemma_12} and keeping Remark \ref{interpol_aux_norm} in mind, we enlarge the interval $(p_2,p_1)$ to $(q_2,q_1)$, where $q_1,q_2$ are defined in Lemma \ref{lemma_12}. We apply Lemma \ref{lemma_12} repeatedly until we move to an interval larger than $(\frac{N+2}{N-2},\frac N2\frac{N+2}{N-2})$. Finally we notice that as $p_1\to\frac N2\frac{N+2}{N-2}$, $q_1\to\infty$ and as $p_2\to\frac{N+2}{N-2}$, $q_2\to\frac{2^*}{2}$. This concludes the proof.
\end{proof}
\subsection{Integral estimates over the safe regions} 
In this subsection, we aim to derive several integral estimates of $u_n$ and $U_n$ over the safe regions $\AA_n^i$. We recall a few facts from \cite{BySo}. Suppose $u$ is a weak solution to $$\del u+\fra u=f \text{ in }\Om,\, u=0\text{ in }\RR \setminus \Om,$$ where $f\in L^1(\Om)$. Let $B_{\tilde{r}}^N(x_0)\subset\Om$. Then for any $0<\rho<\tilde{r}$, using \cite[Eq. (6.2)]{BySo}, one has 
\begin{equation}\label{byso1}
    \int_\rho^{\tilde{r}} E(u;x_0,t)\frac{\dt}{t}\leq cE(u;x_0,\tilde{r})+c\int_{\rho}^{\tilde{r}}\left(\frac{1}{t^{N-1}}\int_{B_t^N}|f|\dx\right)\dt,
\end{equation}
where $c=c(N,s,\text{diam}(\Om))$. Now for any $0<\Tilde{\rho}\leq \frac{\rho}{2}<\frac{\tilde{r}}{8}$ and $\theta\in(\frac{1}{4},\frac12]$, by \cite[Eq. (6.3)]{BySo} and \eqref{byso1}, we get 
\begin{align}
    |(u)_{B_\rho^N(x_0)}-(u)_{B_{\Tilde{\rho}}^N(x_0)}|&\leq \int_{\Tilde{\rho}}^{\frac{\rho}{\theta}}E(u;x_0,t)\frac{\dt}{t}\leq \int_{\Tilde{\rho}}^{\tilde{r}}E(u;x_0,t)\frac{\dt}{t}\nonumber\\
    &\leq cE(u;x_0,\tilde{r})+c\int_{\tilde{\rho}}^{\tilde{r}}\left(\frac{1}{t^{N-1}}\int_{B_t^N}|f|\dx\right)\dt.\nonumber
\end{align}
Since, the above inequality is true for any $\rho<\frac{\tilde r}{4}$, taking $\rho\uparrow\frac{\tilde r}{4}$ we have
\begin{equation}\label{byso2}
   |(u)_{B_\frac{\tilde r}{4}^N(x_0)}-(u)_{B_{\Tilde{\rho}}^N(x_0)}|\leq cE(u;x_0,\tilde{r})+c\int_{\tilde{\rho}}^{\tilde{r}}\left(\frac{1}{t^{N-1}}\int_{B_t^N}|f|\dx\right)\dt
\end{equation}

We also observe that for any $\hat{\rho}\in[\frac {\tilde{r}}4,\tilde{r}]$, 
\begin{align}\label{byso3}
    |(u)_{B_{\hat{\rho}}^N(x_0)}-(u)_{B_{\tilde{r}}^N(x_0)}|&\leq\fint_{B_{\hat{\rho}}^N(x_0)}|u-(u)_{B_{\tilde{r}}^N(x_0)}|\dx \no \\
    &\leq 4^N\fint_{B_{\tilde{r}}^N(x_0)}|u-(u)_{B_{\tilde{r}}^N(x_0)}|\dx\leq 4^N E(u;x_0,\tilde{r}).
\end{align}
Recall that $v_n$ weakly solves $$\del u+\fra u=A|u_n|^{2^*-1}+A\text{ in }\Tilde{\Om}, \; u=0\text{ in } \RR \setminus \Tilde{\Omega},$$
where $\tilde{\Om}$ is given in \ref{O_1} and $\{v_n\}$ is also uniformly bounded in $X_0(\tilde{\Om})$.
\begin{proposition}\label{estimate-1}
For $\gamma\in[1,\frac{N}{N-2})$, $r\in [\ov{C}\si_n^{-\frac12},\frac{\text{dist}(\Om,\pa\Tilde{\Om})}{8}]$; where $\Tilde{\Om}$ is given in {\rm\ref{O_1}}, and $x_0\in\Om$, the following holds
$$\left(\frac1{r^N}\int_{B_r^N(x_0)\cap\Om}|u_n|^\gamma\right)^{\frac1{\gamma}}\leq C(N,s,\Om,\Tilde{\Om}).$$
\end{proposition}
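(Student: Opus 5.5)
We reduce everything to $v_n$: by \eqref{est-1}, $|u_n|\le v_n$ a.e. in $\Om$. So it suffices to bound $\left(\frac1{r^N}\int_{B_r^N(x_0)}v_n^\gamma\right)^{1/\gamma}$. Here $v_n\ge0$ solves \eqref{PDE1} in $\tilde\Om$, and $B^N_{8r}(x_0)\subset\tilde\Om$ because $r\le \text{dist}(\Om,\pa\tilde\Om)/8$. We will combine the weak Harnack inequality for supersolutions of the mixed operator \cite{GaKi} with the Wolff-potential type estimate \eqref{byso2}, applied with $f_n=A|u_n|^{2^*-1}+A$, $\tilde r=8r_0$ for a fixed $r_0\sim\text{dist}(\Om,\pa\tilde\Om)/8$, and $\tilde\rho=r$.

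\textbf{Step 1 (weak Harnack).} Since $v_n\ge0$ is a supersolution, $(-\Delta+\fra)v_n\ge0$ in $B_{2r}(x_0)$, so for $\gamma<\frac N{N-2}$ the weak Harnack inequality gives
\[
\Big(\fint_{B_r}v_n^\gamma\Big)^{1/\gamma}\le C\,\essinf_{B_{r}}v_n + C\,\text{Tail}(v_n^-;x_0,2r)\le C\fint_{B_r}v_n,
\]
where the tail vanishes because $v_n\ge0$. The uniformity of the constants in $r\le1$ must be checked, using the scaling of the nonlocal part with $r^{2-2s}\le C$. It therefore suffices to bound the mean $(v_n)_{B_r(x_0)}$.

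\textbf{Step 2 (mean oscillation chain).} We apply \eqref{byso2}, together with \eqref{byso3} to pass from radius $\tilde r/4$ to $\tilde r$:
\[
(v_n)_{B_r}\le (v_n)_{B_{\tilde r}}+CE(v_n;x_0,\tilde r)+C\int_r^{\tilde r}\frac1{t^{N-1}}\int_{B_t(x_0)}f_n\dx\dt .
\]
The first two terms are bounded by $C(\|v_n\|_{L^1}+\tilde r^{2}\int|v_n||x-x_0|^{-N-2s})$ with $\tilde r$ fixed, hence by the uniform $X_0(\tilde\Om)$ bound (Remark~\ref{Ap11-1}). The constant part $A$ of $f_n$ contributes $C\int_0^{\tilde r}t\dt\le C$.

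\textbf{Step 3 (main obstacle: the critical term).} It remains to show $\int_r^{\tilde r}t^{1-N}\int_{B_t}|u_n|^{2^*-1}\dx\dt\le C$ uniformly for $r\ge\ov C\si_n^{-1/2}$. Here we use Proposition~\ref{new_norm}: write $|u_n|\le w_1+w_2$ with $\|w_1\|_{p_1}\le C$ for large $p_1$ and $\|w_2\|_{p_2}\le C\si_n^{N/2^*-N/p_2}$ for $p_2$ slightly above $\frac{N}{N-2}$. Then $|u_n|^{2^*-1}\le C(w_1^{2^*-1}+w_2^{2^*-1})$.

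For the $w_1$ part, Hölder gives $\int_{B_t}w_1^{2^*-1}\le Ct^{N(1-(2^*-1)/p_1)}$. The integrand is then $t^{1-N(2^*-1)/p_1}$, which is integrable near $0$ once $p_1>\frac{N(N+2)}{2(N-2)}$.

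For the $w_2$ part, the exponent $2^*-1>p_2$, so Hölder cannot be applied directly. Instead we first use the interpolation trick of Remark~\ref{interpol_aux_norm}: we replace $w_2$ by $\min\{w_2,|u_n|\}$ and choose the exponents so that $(2^*-1)$ sits between $p_2$ and $2^*$. For instance, we take an admissible pair with $q:=2^*-1<2^*$ as the second exponent, which gives $\|w_2\|_{2^*-1}\le C\si_n^{N/2^*-N/(2^*-1)}$. Then
\[
\int_{B_t}w_2^{2^*-1}\le C\si_n^{(2^*-1)(N/2^*-N/(2^*-1))}=C\si_n^{-\frac{N-2}{2}},
\]
since $(2^*-1)\frac N{2^*}-N=-\frac{N-2}2$. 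Integrating against $t^{1-N}$ from $r$ yields $C\si_n^{-(N-2)/2}r^{2-N}$. With $r\ge \ov C\si_n^{-1/2}$ this is at most $C\si_n^{-(N-2)/4}\le C$; this is exactly where the lower bound on $r$ is used. If $2^*-1\le\frac N{N-2}$ is not admissible in Proposition~\ref{new_norm}, we instead use any $p_2\in(\frac N{N-2},2^*-1)$ and interpolate $w_2$ between $L^{p_2}$ and $L^{2^*}$. The same power counting still gives a nonpositive power of $\si_n r^2$. Combining Steps 1–3 yields the claim.
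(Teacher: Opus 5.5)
Your proposal is correct and follows the paper's proof essentially step for step. It uses the domination $|u_n|\le v_n$, the weak Harnack inequality from \cite{GaKi}, the chain of averages \eqref{byso2}--\eqref{byso3} with the mean and tail of $v_n$ controlled by its uniform bound, and the splitting of $|u_n|$ from Proposition~\ref{new_norm}, where the paper takes $q_1=N\frac{N+2}{N-2}$ and $q_2=2^*-1=\frac{N+2}{N-2}$. Since $q_2$ always exceeds $\frac{N}{N-2}$, it is directly admissible, so your interpolation detour and fallback are unnecessary. One small arithmetic slip: $r\ge\ov{C}\si_n^{-1/2}$ gives $\si_n^{-\frac{N-2}{2}}r^{2-N}\le\ov{C}^{\,2-N}$, not $C\si_n^{-\frac{N-2}{4}}$; this is still the uniform bound you need.
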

\begin{proof}
    Let $x_0\in\Om$, $R:=\text{dist}(x_0,\pa\Tilde{\Om})$ and $0<r\leq \frac{R}{8}$. As $v_n$ is a nonnegative supersolution to the homogeneous problem $$\del u+\fra u=0\text{ in }\Tilde{\Om}, \; u=0\text{ in } \RR \setminus \Tilde{\Omega},$$ using the weak Harnack inequality (see \cite[Theorem 8.4]{GaKi}) for any $\gamma\in[1,\frac{N}{N-2})$,  
    \begin{equation}\label{ineq_1.1}
        \left(\fint_{B_r^N(x_0)}v_n^\gamma\right)^{\frac1{\gamma}}\leq c(N,s)\essinf_{B_{2r}(x_0)} v_n.
    \end{equation}
By the triangle inequality and using \eqref{byso3} (with $\hat{\rho}$ and $\tilde r$ replaced by $\frac{R}{4}$ and $R$) and \eqref{byso2} (with 
$\tilde\rho,\, \tilde r$ replaced by $r$ and $R$ respectively) we see that
\begin{align}
    &\left|(v_n)_{B_R^N(x_0)}-(v_n)_{B_{r}^N(x_0)}\right|\no\\
    &\leq \left|(v_n)_{B_R^N(x_0)}-(v_n)_{B_{\frac{R}{4}}^N(x_0)}\right|+\left|(v_n)_{B_{\frac{R}{4}}^N(x_0)}-(v_n)_{B_{r}^N(x_0)}\right|\no\\
    &\leq c E(v_n;x_0,R)+ c\int_r^ R\frac1{t^{N-1}}\left(\int_{B_t^N(x_0)}(A|u_n|^{2^*-1}+A)\dx\right)\dt,\label{ineq_1.2}
\end{align}
where $c=c(N,s,\text{diam}(\Tilde{\Om}))$. From \eqref{ineq_1.2}, we get 
\begin{align}
    \essinf_{B_{2r}^N(x_0)}v_n&\leq \essinf_{B_{r}^N(x_0)}v_n \leq(v_n)_{B_{r}^N(x_0)}\leq |(v_n)_{B_R^N(x_0)}-(v_n)_{B_{r}^N(x_0)}|+(v_n)_{B_R^N(x_0)}\nonumber\\
    &\leq (v_n)_{B_R^N(x_0)}+c E(v_n;x_0,R)+ c\int_r^R\frac1{\rho^{N-1}}\left(\int_{B_\rho^N(x_0)}(A|u_n|^{2^*-1}+A)\dx\right)\d\rho.\label{ineq_1.3}
\end{align}
Further, there exists $c>0$ such that 
\begin{align}
(v_n)_{B_R^N(x_0)}+cE(v_n;x_0,R)&\leq c (v_n)_{B_R^N(x_0)}+c\text{Tail}(v_n-(v_n)_{B_R^N(x_0)};x_0,R)\nonumber
\\
&\leq c((v_n)_{B_R^N(x_0)}+\text{Tail}(v_n;x_0,R)),
\end{align}
where the last inequality holds using $R<\text{diam}(\Tilde{\Om})$.
From the uniform boundedness of $\{v_n\}$ in $L^{2^*}$ norm, we see that the average 
\begin{align}
    (v_n)_{B_R^N(x_0)}=\fint_{B_R^N(x_0)}|v_n|\dx\leq C(N) \|v_n\|_{2^*} R^{\frac{2-N}{2}}\leq C,
\end{align}
where $C$ is independent of $n$ and $R$. For the Tail term we see
\begin{align}
    \text{Tail}(v_n;x_0,R)&=R^2\int_{\RR\setminus B_R^N(x_0)}\frac{|v_n(x)|}{|x-x_0|^{N+2s}}\dx\nonumber\\
    &\leq R^2\left(1+\frac{1+|x_0|}{R}\right)^{N+2s}\int_{\RR}\frac{|v_n(x)|}{(1+|x|)^{N+2s}}\dx\nonumber\\
    &\leq C\int_{\Tilde{\Om}}\frac{|v_n(x)|}{(1+|x|)^{N+2s}}\dx\leq C\|v_n\|_{2^*}|\Tilde{\Om}|^{\frac{1}{(2^*)'}}\leq C,\label{ineq_1.4}
\end{align}
where in the third inequality we use the fact that $\text{dist}(\Om,\pa\Tilde{\Om})<R<\text{diam}(\tilde{\Om})$. Hence combining \eqref{ineq_1.1} and \eqref{ineq_1.3}-\eqref{ineq_1.4} for $0<r\leq\frac{\text{dist}(\Om,\pa\Tilde{\Om})}{8}$, we have 
\begin{align}\left(\frac1{r^N}\int_{B_{r}^N(x_0)\cap\Om}|u_n|^\gamma\right)^{\frac1{\gamma}}&\leq C(N)\left(\fint_{B_r^N(x_0)}v_n^\gamma\right)^{\frac1{\gamma}}\nonumber\\&\leq C+C\int_r^R\frac1{\rho^{N-1}}\left(\int_{B_\rho^N(x_0)}|u_n|^{2^*-1}\dx\right)\d\rho\label{ineq_1.5}.
\end{align}
Next, we estimate the last integration for $r\in [\ov{C}\si_n^{-\frac12},\frac{\text{dist}(\Om,\pa\Tilde{\Om})}{8}]$. By Proposition \ref{new_norm}, there exists $C>0$ independent of $n$ such that $\|u_n\|_{q_1,q_2,\si_n}\leq C$ for any $q_1,q_2$ satisfying $\frac{2^*}{2}<q_2<2^*<q_1.$ Let $q_2=\frac{N+2}{N-2}$ and $q_1=N\frac{N+2}{N-2}$. Then we choose $u_{1,n}$ and $u_{2,n}$ such that $$|u_n|\leq u_{1,n}+u_{2,n},\quad \|u_{1,n}\|_{q_1}\leq C,\quad \|u_{2,n}\|_{q_2}\leq C\si_n^{\frac{N}{2^*}-\frac N{q_2}}=C\si_n^{-\frac{N}{2^*(2^*-1)}}.$$
We estimate
\begin{align*}
    \int_r^R\frac1{\rho^{N-1}}\left(\int_{B_\rho^N(x_0)}|u_{1,n}|^{2^*-1}\dx\right)\d\rho&\leq C\|u_{1,n}\|_{q_1}^{2^*-1}\int_0^R\frac1{\rho^{N-1}}(\rho^N)^{\frac{N-1}{N}}\d\rho\\
    &\leq CR\|u_{1,n}\|_{q_1}^{2^*-1}\leq C,
\end{align*}
and using $r\geq \ov{C}\si_n^{-\frac12}$,
\begin{align*}
    \int_r^R\frac1{\rho^{N-1}}\left(\int_{B_\rho^N(x_0)}|u_{2,n}|^{2^*-1}\dx\right)\d\rho&\leq C\int_r^R\frac1{\rho^{N-1}}\|u_{2,n}\|_{q_2}^{2^*-1}\d\rho\\
    &\leq C\int_r^R\frac1{\rho^{N-1}}\si_n^{-\frac{N}{2^*}}\d\rho\leq C\si_n^{-\frac{N}{2^*}}r^{2-N}\leq C.
\end{align*}
Combining the above estimates with \eqref{ineq_1.5}, we obtain the required result.
\end{proof}
\begin{remark}
   Using similar arguments we can give more information about a general weak solution to $$\del u+\fra u=f \text{ in }\Om, u=0\text{ in }\RR\setminus\Om.$$
    Assume that $f\in L^{(2^*)'}(\RR)$ where $(2^*)'$ is the H\"{o}lder conjugate of $2^*$ (or assume $f\in L^{(2^*)'}(\Om)$ and $f$ is zero outside $\Om$). As in Section \ref{estsunderauxnorm}, let $v$ be the unique weak solution to $$\del u+\fra u=|f| \text{ in }\tilde{\Om}, \quad u=0\text{ in }\RR\setminus\tilde{\Om}.$$
    Then $v$ is non-negative and $$|u|\leq v \,\text{ a.e. in }\Om.$$
    Furthermore, $$\rho(v)^2=\int_{\tilde{\Om}}|f|v\dx\leq \|f\|_{(2^*)'}\|v\|_{2^*}\leq c\|f\|_{(2^*)'}\rho(v).$$
    Thus, $$\|v\|_{2^*}\leq c\rho(v)\leq c\|f\|_{(2^*)'}.$$
    Following the arguments of Proposition \ref{estimate-1},  for any $x_0\in\Om,$ $0<r\leq \frac{R_0}{8}:=\frac{\text{dist}(\Om,\pa\tilde{\Om})}{8}$,
    we get
    \begin{align*}
        \left(\fint_{B_r^N(x_0)}|u|^\gamma\right)^{\frac1{\gamma}}&\leq\left(\fint_{B_r^N(x_0)}v^\gamma\right)^{\frac1{\gamma}}\\&\leq c((v)_{B_{R_0}^N(x_0)}+\text{Tail}(v;x_0,{R_0}))+c\int_r^{R_0}\frac1{\rho^{N-1}}\left(\int_{B_\rho^N(x_0)}|f|\dx\right)\d\rho\\
        &\leq c\|f\|_{(2^*)'}+c\int_r^{R_0}\frac1{\rho^{N-1}}\left(\int_{B_\rho^N(x_0)}|f|\dx\right)\d\rho,
    \end{align*}
    where $c=c(N,s,\text{diam}(\tilde{\Om}))$. This estabishes \eqref{form_2}.
\end{remark}
Next, we prove a similar estimate for the extension $U_n$. For that, we impose the second condition on $\tilde{\Om}$:

\begin{enumerate}[label={($\bf O_2$)}]
\item \label{O_2} Let $\tilde{\Om}$ be such that $\text{dist}(\Om,\pa\tilde{\Om})\geq16\text{ diam}(\Om)$.
\end{enumerate}

\begin{proposition}\label{extension_estimate-1}
Let $r\in[\ov{C}\si_n^{-\frac12},\text{diam}(\Om)]$, $x_0\in\Om$, and $\tilde{\Om}$ be given as in {\rm \ref{O_1} and \ref{O_2}}. Then
    \begin{align}\label{est-10}
        \frac1{r^{N+2-2s}}\int_{B_r^+(x_0,0)}y^{1-2s}\left( \int_{\Om}P(x-\xi,y)|u_n(\xi)|\d\xi\right)\dx\dy\leq C(N,s,\Om,\tilde{\Om}).
    \end{align}
    As a consequence,
    \begin{equation}
        \frac1{r^{N+2-2s}}\int_{B_r^+(x_0,0)}y^{1-2s}|U_n(x,y)|\dx\dy\leq  C(N,s,\Om,\tilde{\Om}).
    \end{equation}
\end{proposition}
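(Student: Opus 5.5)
The plan is to prove \eqref{est-10} directly from Proposition \ref{estimate-1}, working on the kernel side. The second assertion then follows immediately, because $|U_n(x,y)|\le\int_\Om P(x-\xi,y)|u_n(\xi)|\d\xi$ by \eqref{Poisson_formula} and $u_n=0$ outside $\Om$.

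By Tonelli, the left-hand side of \eqref{est-10} equals
\[
\frac1{r^{N+2-2s}}\int_\Om |u_n(\xi)|\,K_r(x_0-\xi)\d\xi,\qquad K_r(z):=\int_{B_r^+(0,0)}y^{1-2s}P(z+x,y)\dx\dy .
\]
I will bound $K_r$ in two regimes.

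\textbf{Near regime, $|z|\le 2r$.} Here I use $\int_{\RR}P(x,y)\dx=1$ only in averaged form, which gives $K_r(z)\le C r^{2-2s}$. More precisely,
\[
K_r(z)\le \int_0^r y^{1-2s}\int_{\RR}P(z+x,y)\dx\dy=\frac{r^{2-2s}}{2-2s}.
\]
This crude bound turns out to be too weak, so I refine it. Using $P(x,y)\le Cy^{2s}|x|^{-N-2s}$ together with $P\le Cy^{-N}$, one gets for $|z|\ge 2r$:
\[
K_r(z)\le C\,|z|^{-N-2s}\int_{B_r^+}y\dx\dy\le C\,r^{N+2}|z|^{-N-2s}.
\]

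\textbf{Dyadic splitting.} Split $\Om$ into $B_{2r}^N(x_0)$ and the dyadic shells $D_j=\{2^jr<|\xi-x_0|\le 2^{j+1}r\}$, $j\ge1$, which are finitely many since $\Om$ is bounded.

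\emph{Inner ball.} The contribution of $B_{2r}^N(x_0)$ is at most
\[
\frac{C r^{2-2s}}{r^{N+2-2s}}\int_{B_{2r}^N(x_0)\cap\Om}|u_n|=C\,\frac1{r^N}\int_{B_{2r}^N(x_0)\cap\Om}|u_n| .
\]
This is bounded by Proposition \ref{estimate-1} with $\gamma=1$ at radius $2r$, provided $2r\le \text{dist}(\Om,\pa\tilde\Om)/8$. This is exactly where \ref{O_2} enters: $2r\le 2\,\text{diam}(\Om)\le \text{dist}(\Om,\pa\tilde\Om)/8$. The lower bound $2r\ge \ov C\si_n^{-1/2}$ is automatic.

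\emph{Outer shells.} Shell $D_j$ contributes at most
\[
\frac{C r^{N+2}(2^jr)^{-N-2s}}{r^{N+2-2s}}\int_{B^N_{2^{j+1}r}(x_0)\cap\Om}|u_n|\le C\,2^{-j(N+2s)}\,(2^{j}r)^{-N}r^{N}\cdot r^{-N}\!\!\int_{B^N_{2^{j+1}r}\cap\Om}\!\!|u_n|\cdot 2^{jN}.
\]
The key point is to rewrite this via averages as $C\,2^{-2sj}\,(2^{j+1}r)^{-N}\int_{B^N_{2^{j+1}r}(x_0)\cap\Om}|u_n|$.

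\textbf{Main obstacle.} The difficulty is that $2^{j+1}r$ may exceed the admissible range $\text{dist}(\Om,\pa\tilde\Om)/8$ of Proposition \ref{estimate-1}. Once $2^{j+1}r\ge \text{diam}(\Om)$, however, the ball contains all of $\Om$, so I instead use the global bound
\[
\int_\Om|u_n|\le C\|u_n\|_{2^*}\le C,
\]
while $(2^{j+1}r)^{-N}\le \text{diam}(\Om)^{-N}$. Hence every shell average is bounded uniformly in $n$, and the geometric series $\sum_j 2^{-2sj}$ converges. This closes \eqref{est-10}.

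For the kernel bound I need to verify $\sup_{|x|\le r}|z+x|^{-1}\le 2|z|^{-1}$ for $|z|\ge2r$, which holds since $|z+x|\ge|z|-r\ge|z|/2$, and $\int_{B_r^+}y^{1-2s}y^{2s}\dx\dy\le Cr^{N+2}$. Both are routine.
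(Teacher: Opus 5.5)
Your argument is correct and follows the paper's proof essentially line by line. Both use Tonelli, the bound $\int_0^r\int_{\RR}y^{1-2s}P(x-\xi,y)\dx\dy=\frac{r^{2-2s}}{2-2s}$ on $B_{2r}^N(x_0)$ together with Proposition~\ref{estimate-1} (admissible at radius $2r$ thanks to \ref{O_2}), and the far-field bound $Cr^{N+2}|\xi-x_0|^{-N-2s}$ summed over dyadic shells. The only harmless difference is the shells with $2^{j+1}r\ge\text{diam}(\Om)$: you use the global bound $\int_\Om|u_n|\le C$, whereas the paper notes that such shells miss $\Om$ and uses \ref{O_2} to exclude the intermediate case. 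Also, the ``crude'' bound you call too weak is exactly the one you and the paper use on the inner ball.
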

\begin{proof}
    Using \eqref{Poisson_formula} and Tonelli's theorem, 
    \begin{align*}
        \int_{B_r^+(x_0,0)}&y^{1-2s}|U_n(x,y)|\dx\dy\leq\int_{B_r^+(x_0,0)}y^{1-2s}\left( \int_{\Om}P(x-\xi,y)|u_n(\xi)|\d\xi\right)\dx\dy\\
        &\leq\int_{\Om}|u_n(\xi)|\left(\int_0^r\int_{B_r^N(x_0)}y^{1-2s}P(x-\xi,y)\dx\dy\right)\d\xi\\
        &=\left( \int_{B_{2r}^N(x_0)} + \int_{\Om\setminus B_{2r}^N(x_0)} \right) |u_n(\xi)|\left(\int_0^r\int_{B_r^N(x_0)}y^{1-2s}P(x-\xi,y)\dx\dy\right)\d\xi.
    \end{align*}
    Using the fact $$\int_{\RR}P(x,y)\dx=1,$$ we observe that
    \begin{align*}
        &\int_{B_{2r}^N(x_0)}|u_n(\xi)|\left(\int_0^r\int_{B_r(x_0)}y^{1-2s}P(x-\xi,y)\dx\dy\right)\d\xi\\
        &\leq \int_{B_{2r}^N(x_0)}|u_n(\xi)|\left(\int_0^r\int_{\RR}y^{1-2s}P(x-\xi,y)\dx\dy\right)\d\xi\\
        &=\frac{r^{2-2s}}{2-2s}\int_{B_{2r}^N(x_0)}|u_n(\xi)|\d\xi\leq  C(N,s,\Om,\tilde{\Om}) r^{N+2-2s},
    \end{align*}
    where the last inequality follows from  Proposition \ref{estimate-1}. When, $x\in B_{r}^N(x_0)$ and $\xi\in \Om\setminus B_{2r}^N(x_0)$, we have $|x-\xi|>\frac12|\xi-x_0|$. Thus, 
    $$y^{1-2s}P(x-\xi,y)= \frac{y}{(|x-\xi|^2+y^2)^{\frac{N+2s}{2}}}\leq  \frac{Cy}{|\xi-x_0|^{N+2s}}.$$
    Thus, 
    $$\int_0^r\int_{B_r^N(x_0)}y^{1-2s}P(x-\xi,y)\dx\dy\leq  \frac{Cr^{N+2}}{|\xi-x_0|^{N+2s}}.$$
    Now, 
    \begin{align*}
        &\int_{\Om\setminus B_{2r}^N(x_0)}|u_n(\xi)|\left(\int_0^r\int_{B_r^N(x_0)}y^{1-2s}P(x-\xi,y)\dx\dy\right)\d\xi\\
        &\leq  Cr^{N+2}\sum_{j=1}^{\infty}\int_{B_{2^{j+1}r}^N(x_0)\setminus B_{2^{j}r}^N(x_0)}\frac{|u_n(\xi)|}{|\xi-x_0|^{N+2s}}\d\xi\\
        &\leq   Cr^{2-2s}\sum_{j=1}^{\infty}\frac1{2^{j(N+2s)}}\int_{B_{2^{j+1}r}^N(x_0)\setminus B_{2^{j}r}^N(x_0)}|u_n(\xi)|\d\xi.
    \end{align*}
    Observe that if $2^{j+1}r\leq \frac{\text{dist}(\Om,\pa\tilde{\Om})}{8}$, then
    $$\int_{B_{2^{j+1}r}^N(x_0)\setminus B_{2^{j}r}^N(x_0)}|u_n(\xi)|\d\xi\leq \int_{B_{2^{j+1}r}^N(x_0)}|u_n(\xi)|\d\xi\leq C(N,s,\Om,\tilde{\Om})2^{(j+1)N}r^N.$$
    If $2^jr\geq \text{diam}(\Om)$, then $B_{2^{j+1}r}^N(x_0)\setminus B_{2^{j}r}^N(x_0)$ lies outside $\Om$ and thus
    $$\int_{B_{2^{j+1}r}^N(x_0)\setminus B_{2^{j}r}^N(x_0)}|u_n(\xi)|\d\xi=0.$$
    Only remaining case is when $2^{j+1}r> \frac{\text{dist}(\Om,\pa\tilde{\Om})}{8}$ and $2^jr< \text{diam}(\Om)$ but this can not happen, as \ref{O_2} infers that 
    $$2^{j+1}r> \frac{\text{dist}(\Om,\pa\tilde{\Om})}{8}\geq 2\text{diam}(\Om)\implies 2^j r>\text{diam}(\Om).$$
    Hence,
    \begin{align*}
        &\int_{\Om\setminus B_{2r}^N(x_0)}|u_n(\xi)|\left(\int_0^r\int_{B_r(x_0)}y^{1-2s}P(x-\xi,y)\dx\dy\right)\d\xi\\
        &\leq C(N,s,\Om,\tilde{\Om})r^{N+2-2s}\sum_{j=1}^{\infty}\frac{2^{(j+1)N}}{2^{j(N+2s)}}=C(N,s,\Om,\tilde{\Om})r^{N+2-2s}2^N\sum_{j=1}^{\infty}\frac{1}{2^{2sj}}\\
        &\leq C(N,s,\Om,\tilde{\Om})r^{N+2-2s}.
    \end{align*}
Accumulating all the estimates, we get \eqref{est-10}.  
\end{proof}
In the next proposition, we get the estimate of $u_n$ and $U_n$ over the safe region $\AA_n^2$ for any $q \ge 1$. 
\begin{proposition}\label{integ_prop}
For every $q\geq1$, it holds 
    \begin{align}
        &\int_{\pa\AA_n^2\cap\{y=0\}}|u_n|^q\dx\leq C\si_n^{-\frac N2},\label{q-lemma_1}\\
        &\int_{\AA_n^2}y^{1-2s}|U_n|^q\dx\dy\leq C\si_n^{-\left(\frac{N+2-2s}{2}+\frac{(s-1)q}{2^*}\right)},\label{q-lemma_2}
    \end{align}
    where $C$ is independent of $n$.
\end{proposition}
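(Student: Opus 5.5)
The plan is to reduce both estimates to pointwise bounds on the safe region and then integrate over a set of the right size. Write $r_n:=\si_n^{-\frac12}$. The flat part $\pa\AA_n^2\cap\{y=0\}$ is contained in $B^N_{(\ov{C}+4)r_n}(x_n)$, so its measure is $\le Cr_n^N=C\si_n^{-\frac N2}$. Also $\int_{\AA_n^2}y^{1-2s}\dx\dy\le Cr_n^{N+2-2s}=C\si_n^{-\frac{N+2-2s}{2}}$. Hence \eqref{q-lemma_1} follows from a uniform bound $\sup_{\pa\AA_n^2\cap\{y=0\}}|u_n|\le C$. Likewise \eqref{q-lemma_2} follows from $\sup_{\AA_n^2}|U_n|\le C\si_n^{\frac{1-s}{2^*}}$, since $\big(\si_n^{\frac{1-s}{2^*}}\big)^q\si_n^{-\frac{N+2-2s}{2}}$ is exactly the claimed right-hand side. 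Both bounds must hold with $C$ independent of $n$.

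\textbf{Step 1 (boundedness of $u_n$ on the flat safe region).} Cover the flat part of $\AA_n^2$ by boundedly many balls $B^N_{r_n/2}(z)$ with $B^N_{r_n}(z)$ contained in the flat part of $\AA_n^1$, which contains no concentration point. On such a ball, blow up by $\tilde u_n(\xi):=u_n(z+r_n\xi)$. This function solves
\[
-\Delta\tilde u_n+r_n^{2-2s}(-\Delta)^s\tilde u_n=a_n\tilde u_n,\qquad a_n:=r_n^{2}\big(\la|u_n|^{p-2}+|u_n|^{p_n-2}\big)(z+r_n\cdot).
\]
The nonlocal part has a small coefficient, so the local De Giorgi--Moser estimates of \cite{GaKi} apply with constants uniform in $n$, provided three inputs are uniformly controlled.

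First, the starting $L^\gamma$ bound: Proposition~\ref{estimate-1} at scale $r\sim r_n$ gives $\fint_{B_1}|\tilde u_n|^\gamma\le C$.

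Second, the tail term: it is controlled exactly as in \eqref{ineq_1.4}, using $\|v_n\|_{2^*}\le C$ together with Proposition~\ref{estimate-1} on dyadic annuli.

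Third, $\|a_n\|_{L^{\frac N2+\eps}(B_1)}\le C$. For this I use Proposition~\ref{new_norm} with $q_1$ very large and $q_2$ just above $\frac{N}{N-2}$, splitting $|u_n|\le u_{1,n}+u_{2,n}$. The $u_{1,n}$ piece gives $r_n^2\|u_{1,n}\|^{2^*-2}_{L^{q_1}}$ rescaled, which is bounded. For the $u_{2,n}$ piece, the weight $\si_n^{\frac N{2^*}-\frac N{q_2}}$ exactly compensates the rescaling at scale $r_n=\si_n^{-1/2}$ (this is where the choice of scale $\si_n^{-1/2}$, intermediate between the bubble scale $\si_n^{-1}$ and $1$, is used). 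If the $L^{N/2+\eps}$ bound for $a_n$ fails directly, I would instead run the Moser iteration in the auxiliary norm as in Lemma~\ref{lemma_12}.

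The conclusion of Step 1 is $\|\tilde u_n\|_{L^\infty(B_{1/2})}\le C$, which gives \eqref{q-lemma_1}.

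\textbf{Step 2 (pointwise bound for $U_n$ on $\AA_n^2$).} Here I would work directly with the Poisson formula \eqref{Poisson_formula} and split $u_n=u_n\chi_{B^N_{2(\ov C+5)r_n}(x_n)}+u_n\chi_{\text{rest}}$. For the far part, the argument of Proposition~\ref{extension_estimate-1} (dyadic annuli plus Proposition~\ref{estimate-1}) bounds its extension by a constant on $\AA_n^2$. The near part contains the bubbles, and for it I would use the energy instead. By Remark~\ref{error_decay}, $W_n:=U_n-U_\infty$ satisfies $\int y^{1-2s}|\nabla W_n|^2\le C\si_n^{2s-2}$, while $U_\infty$ is bounded since $u_\infty\in L^\infty$. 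In the half-annulus $\AA_n^1$, $W_n$ solves $\text{div}(y^{1-2s}\nabla W_n)=0$. Its boundary data on the flat part are bounded by Step 1, and the weighted conormal derivative there is controlled through the equation on the flat part. A local Moser iteration in the weighted space, started from the Tan--Xiong inequality \eqref{Tan-Xiong} and rescaled to the unit annulus, then bounds $\sup_{\AA_n^2}|W_n|$ by $C$ plus a power of the energy times a power of $r_n$. Interpolating this against the $L^1$-weighted bound of Proposition~\ref{extension_estimate-1} should give $\sup_{\AA_n^2}|U_n|\le C\si_n^{\frac{1-s}{2^*}}$.

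\textbf{Main obstacle.} I expect Step 2 to be the main difficulty. The exact exponent $\frac{1-s}{2^*}$ must come out of the combination of energy decay $\si_n^{2s-2}$, rescaling at scale $\si_n^{-1/2}$, and the weighted Sobolev exponent, and the boundary (Neumann-type) contribution on the flat part must be handled uniformly. What I would try first is to bound the conormal term using only Step 1 and the equation, avoiding Lemma~\ref{convergence}, which requires regularity not available uniformly. If the local iteration gives a worse power, I would instead estimate $\int_{\AA_n^2}y^{1-2s}|W_n|^{q}$ directly: apply H\"older with $q$ interpolating between $2t$ from \eqref{Tan-Xiong} and $L^\infty$ from the far/near splitting. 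That estimate is all that \eqref{q-lemma_2} actually requires.
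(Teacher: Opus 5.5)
Your Step~1 rests on $\|a_n\|_{L^{\frac N2+\eps}(B_1)}\le C$, and this cannot be obtained from Proposition~\ref{new_norm}. That proposition only allows $q_2<2^*$. So the piece $u_{2,n}$ puts $|u_{2,n}|^{2^*-2}$ in $L^t$ with $t=q_2/(2^*-2)<2^*/(2^*-2)=\frac N2$. The weight $\si_n^{\frac N{2^*}-\frac N{q_2}}$ does make this piece small after rescaling, but only in a Lebesgue space strictly below the threshold $\frac N2$ that a Moser or De Giorgi iteration needs. Your choice of $q_2$ near $\frac N{N-2}$ makes this worse, and iterating in the auxiliary norm as in Lemma~\ref{lemma_12} only reproduces Proposition~\ref{new_norm}.

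The obstruction is structural. A bubble $\Psi_n^i$ centred in $B_{r_n}(z)$ satisfies the same auxiliary-norm bounds. Yet for the potential it generates, $\int_{B_1}|a_n|^{\frac N2+\eps}\approx r_n^{2\eps}(\si_n^i)^{2\eps}\ge\si_n^{\eps}\to\infty$. So the estimate must use that $B_{r_n}(z)$ avoids all concentration points. You state this but never convert it into an inequality. The paper does so via Proposition~\ref{PS}. Since $|x-x_n^i|\ge c\,\si_n^{-\frac12}$ there, each $\Psi_n^i$ is bounded on $B_{\si_n^{-\frac12}}(x_0)$. Together with $u_\infty\in L^\infty$ this gives $\int_{B_{\si_n^{-\frac12}}(x_0)}|u_n|^{2^*}\dx\to0$. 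By scale invariance, the rescaled potential is therefore \emph{small} in $L^{\frac N2}(B_1)$, which is exactly the $\eps$-regularity hypothesis of Lemma~\ref{Moser_iteration}. A potential that is only small in $L^{\frac N2}$ gives $L^q$ bounds for every finite $q$, which is all the proposition claims; your $L^\infty$ bound would need a second pass.

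Step~2 is not yet an argument, and the route you sketch would not give the bound. The energy estimate of Remark~\ref{error_decay}, rescaled to a unit annulus at scale $r_n$, becomes $C\si_n^{\frac{N-2s}{2}}\si_n^{2s-2}=C\si_n^{\frac{N+2s-4}{2}}$. This diverges in the admissible range ($N\ge4$, or $N=3$ with $s>\frac12$), so no local iteration started from it produces $\si_n^{\frac{1-s}{2^*}}$. Moreover, a sup bound on $u_n$ does not control the conormal derivative $(-\Delta)^su_n=f_n(u_n)+\Delta u_n$ on the flat part.

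The exponent $\frac{1-s}{2^*}$ is not a sharp quantity hidden in the energy. In the paper it is just the factor $\theta^{\frac1{2^*}}$, with $\theta=\si_n^{s-1}$, that Lemma~\ref{Moser_iteration} attaches to the extension. There, after Kato's inequality, $w_n=|u_n|(\si_n^{-\frac12}\cdot)$ and its extension $W_n\ge|U_n(\si_n^{-\frac12}\cdot)|$ are iterated \emph{together}, by testing the extended weak formulation with $\xi^2W^{2q-1}$. The flat-boundary term is thus absorbed by the equation instead of being estimated. The $L^1$ starting bounds are Propositions~\ref{estimate-1} and \ref{extension_estimate-1}. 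The part of $\AA_n^2$ with $y\ge\frac14\si_n^{-\frac12}$ is handled by the interior Harnack inequality for $\text{div}(y^{1-2s}\nabla\cdot)$, which is uniformly elliptic on the rescaled annulus.

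If Step~1 is repaired up to a sup bound $|u_n|\le M$ on the flat part, a simpler replacement for your Step~2 also works. The functions $(\pm U_n-M)^+$ vanish there, and their zero extensions are subsolutions for the $A_2$ weight $|y|^{1-2s}$. Weighted local boundedness plus Proposition~\ref{extension_estimate-1} then gives even $\sup_{\AA_n^2}|U_n|\le C$, with no conormal data or energy needed.
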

\begin{proof}
By Proposition \ref{PS}, $\text{Tr}(U_n)=\text{Tr}(U_\infty)+\sum_{i=1}^k \Psi_n^i+o_n(1)$ in $L^{2^*}(\RR)$. Since $\text{Tr}(U_\infty)$ weakly solves \eqref{main_PDE}, by Moser iteration \cite[Theorem 1.1]{SuVaWeZh1}, $\text{Tr}(U_\infty) \in L^{\infty}(\Om)$. Note that $B_{\si_n^{-\frac12}}^N(x_0)$ (where $(x_0,0)\in\pa\AA_n^2\cap\{y=0\}$) does not contain any concentration points, i.e. 
\begin{align*}
    |x-x_n^i|\geq C\si_n^{-\frac12}, \; i=1,\ldots,k, \text{ for every } x\in B_{\si_n^{-\frac12}}^N(x_0),
\end{align*}
where $C>0$ is independent of $n$. Now using the definition of $\si_n$ and $N\geq 3$, for every $x\in B_{\si_n^{-\frac12}}^N(x_0)$, 
$$\Psi_n^i(x)=(\si_n^i)^{\frac N{2^*}}\frac{1}{(1+|\si_n^i(x-x_n^i)|^2)^{\frac{N-2}{2}}}\leq \frac{(\si_n^i)^{\frac{2-N}{2}}}{|x-x_n^i|^{N-2}}\leq C\left(\frac{\si_n}{\si_n^i}\right)^{\frac
{N-2}2}\leq C,$$
where $C$ is independent of $n$. Thus,
\begin{align}\label{N/2_norm _0}
    \displaystyle \int_{B_{\si_n^{-\frac12}}(x_0)}|\text{Tr}(U_n)|^{2^*}\dx
        &\leq C\int_{B_{\si_n^{-\frac12}}(x_0)} \left(|\text{Tr}(U_\infty)|^{2^*}+\sum_{i=1}^k|\Psi_n^i|^{2^*}\dx+(o_n(1))^{2^*} \right) \dx \no\\
        &\leq C\si_n^{-\frac N2}+C\|o_n(1)\|^{2^*}_{L^{2^*}(\RR)}\to0.
\end{align}
Define $w_n(x):=|u_n|(\si_n^{-\frac12}x)$ supported on $\Om_n=\si_n^{\frac12}\Om$. Applying Kato-type inequalities, we see that $w_n$ weakly satisfies  
\begin{align*}
    \del w_n+\si_n^{s-1}\fra w_n\leq \si_n^{-1}(w_n^{p_n-2}+\la w_n^{p-2})w_n\text{ a.e. in } \Omega_n, \; w_n = 0 \text{ in } \RR \setminus \Omega_n.
\end{align*}
The Caf{}farelli-Silvestre's extension $W_n$ of $w_n$ weakly satisfies 
\begin{equation*}
    \begin{cases}
        \text{div}(y^{1-2s}\nabla W_n)=0\text{ in }\R_+^{N+1},\\
        W_n(x,0)=w_n(x)\text{ in }\RR,\\
        -\Delta w_n-\si_n^{s-1}\lim_{y\to0^+}y^{1-2s}\frac{\pa W_n}{\pa y}\leq (\si_n^{-1}(w_n^{p_n-2}+\la w_n^{p-2}))w_n\text{ in }\Om_n.
    \end{cases}
\end{equation*}
Using the Poisson kernel, we get the following relation between $U_n$ and $W_n$:
\begin{align*}
    |U_n(\si_n^{-\frac12}x,\si_n^{-\frac12}y)|\leq\int_{\Om}P(\si_n^{-\frac12}x-\xi,\si_n^{-\frac12}y)|u_n(\xi)|\d\xi
    =\int_{\Om_n}P(x-\zeta,y)w_n(\zeta)\d\zeta=W_n(x,y).
\end{align*}
Let $y_0=\si_n^{\frac12}x_0$. Then by \eqref{N/2_norm _0},
\begin{align*}
    \int_{B_1(y_0)}|\si_n^{-1}(w_n^{p_n-2}+\la w_n^{p-2})|^{\frac{N}{2}}\dz&\leq C\int_{B_1(y_0)}|\si_n^{-1}(w_n^{2^*-2}+1)|^{\frac{N}{2}}\dz\leq C\int_{B_{\si_n^{-\frac12}}(x_0)}(|u_n|^{2^*}+1)\dx\\&\leq C\int_{B_{\si_n^{-\frac12}}(x_0)} \left( |\text{Tr}(U_n)|^{2^*}+1 \right)\dx\to0.
\end{align*}
Now, applying Lemma \ref{Moser_iteration}, Proposition~\ref{estimate-1} and Proposition~\ref{extension_estimate-1} for large $n$, we get the following estimate for every $(x_0,0)\in\pa\AA_n^2\cap\{y=0\}$:
\begin{align}
    &\left(\si_n^{\frac N2}\int_{B_{\frac12\si_n^{-\frac12}}(x_0)}|u_n|^q\dx\right)^{\frac1q}+\si_n^{\frac{s-1}{2^*}}\left(\si_n^{\frac{N+2-2s}{2}}\int_{B_{\frac12\si_n^{-\frac12}}^+(x_0,0)}y^{1-2s}|U_n(x,y)|^q\dx\dy\right)^{\frac1q}\nonumber\\
    &\leq\|w_n\|_{L^{q}(B_{\frac12}(y_0))}+\si_n^{\frac{s-1}{2^*}}\|W_n\|_{L^q(B_{\frac12}^+(y_0,0)),y^{1-2s})}\nonumber\\
    &\leq C\left(\|w_n\|_{L^{1}(B_{1}(y_0))}+\si_n^{\frac{s-1}{2^*}}\|W_n\|_{L^1(B_{1}^+(y_0,0)),y^{1-2s})}\right)\nonumber\\
    &= C\si_n^{\frac N2}\int_{B_{\si_n^{-\frac12}}(x_0)}|u_n|\dx+\si_n^{\frac{s-1}{2^*}}\si_n^{\frac{N+2-2s}{2}}\int_{B_{\si_n^{-\frac12}}^+(x_0,0)}y^{1-2s}\left( \int_{\Om}P(x-\xi,y)|u_n(\xi)|\d\xi\right)\dx\dy\nonumber\\
    &\leq C\label{est_q0},
\end{align}
where $C$ is independent of $n$. The last equality holds using \eqref{Poisson_formula} for $W_n$ and the change of variables. Since $\pa\AA_n^2\cap\{y=0\}$ can be covered by finitely many balls $B_{\frac12\si_n^{-\frac12}}(x_0)$, number of balls being independent of $n$, we get
$$\int_{\pa\AA_n^2\cap\{y=0\}}|u_n|^q\dx\leq C\si_n^{-\frac N2}, \text{ for any } q \ge 1.$$ Observe that $\AA_n^2$ can not be fully covered by half balls of the form $B_{\frac12\si_n^{-\frac12}}^+(x_0,0)$. We decompose $\AA_n^2$ as
\begin{align*}
    \AA_n^2 = \AA_n^2\cap \left( \left\{0\leq y\leq\frac12\si_n^{-\frac12} \right\} \cup \left\{\frac12\si_n^{-\frac12}\leq y\leq(\ov{C}+4)\si_n^{-\frac12}\right\} \right).
\end{align*}
\noi \textbf{I:} Using \eqref{est_q0} and a similar covering argument, we conclude that
\begin{align}
\int_{\AA_n^2\cap\left\{0\leq y\leq\frac12\si_n^{-\frac12}\right\}}y^{1-2s}|U_n|^q\dx\dy\leq C\si_n^{-\left(\frac{N+2-2s}{2}+\frac{(s-1)q}{2^*}\right)},\label{est_q1} 
\end{align}
for every $q\geq1$. 

\noi \textbf{II:} To get an estimate on the remaining region, we apply the Harnack inequality.
Consider two domains $\mathcal{A}_2 \Subset \mathcal{A}_1 \subset\R_+^{N+1}$ such that
\begin{align*}
    \mathcal{A}_2&=\left(B_{(\ov{C}+4)}^+(y_n,0)\setminus\ov{B^+}_{(\ov{C}+1)}(y_n,0)\right)\cap\left\{\frac14\leq y\leq \ov{C}+4\right\},\\
    \mathcal{A}_1&=\left(B_{(\ov{C}+5)}^+(y_n,0)\setminus\ov{B^+}_{\ov{C}}(y_n,0)\right)\cap\left\{\frac15< y< \ov{C}+5\right\}, \text{ where } y_n=\si_n^{\frac12}x_n.
\end{align*} 
Recall that $W_n$ weakly satisfies  
$$\text{div}(y^{1-2s}\nabla W_n)=0\text{ in }\R_+^{N+1}.$$
Observe that the above operator is uniformly elliptic on $\mathcal{A}_1$. Applying Harnack inequality \cite[Corollary 8.21]{GiTr},
$$\sup_{\mathcal{A}_2}W_n\leq C\inf_{\mathcal{A}_2}W_n,$$ where $C$ is independent of $n$.
Using $|U_n(\si_n^{-\frac12}(x,y))|\leq W_n(x,y)$, we get
$$\sup_{\AA_n^2\cap\left\{\frac14\si_n^{-\frac12}\leq y\leq (\ov{C}+4)\si_n^{-\frac12}\right\}}|U_n|\leq C\inf_{\mathcal{A}_2}W_n,$$
where $C$ is independent of $n$.
Finally, 
\begin{align}
    &\int_{\AA_n^2\cap\left\{\frac14\si_n^{-\frac12}\leq y\leq (\ov{C}+4)\si_n^{-\frac12}\right\}}y^{1-2s}|U_n|^q\dx\dy\leq C\si_n^{-\frac{N+2-2s}{2}} \left(\sup_{\AA_n^2\cap\left\{\frac14\si_n^{-\frac12}\leq y\leq (\ov{C}+4)\si_n^{-\frac12}\right\}}|U_n|^q \right) \nonumber\\
    &\leq C\inf_{\mathcal{A}_2}W_n^q\leq C\int_{\mathcal{A}_2 \cap\left\{\frac14\leq y\leq\frac12\right\}}y^{1-2s}W_n^q\dx\dy\leq C\si_n^{\frac{(1-s)q}{2^*}}.\label{est_q2}
\end{align}
In the last inequality, we have used \eqref{est_q0}. 

Therefore, combining \eqref{est_q1} and \eqref{est_q2}, we obtain
$$\int_{\AA_n^2}y^{1-2s}|U_n|^q\dx\dy\leq C\si_n^{-\left(\frac{N+2-2s}{2}+\frac{(s-1)q}{2^*}\right)},$$
for every $q\geq1$. This completes the proof. 
\end{proof}

Finally, we obtain the following gradient estimate of $u_n$ and $U_n$.

\begin{proposition}\label{gradient_prop}
It holds that
$$\int_{\pa\AA_n^3\cap\{y=0\}}|\nabla u_n|^2\dx+\int_{\AA_n^3}y^{1-2s}|\nabla U_n|^2\dx\dy\leq C\si_n^{1-\frac N2},$$
    where $C$ is independent of $n$.
\end{proposition}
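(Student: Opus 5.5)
The proof will be a Caccioppoli-type energy estimate for the coupled problem \eqref{ext_sub_pro}, localized to the safe region with a cutoff adapted to the scale $\si_n^{-\frac12}$. We then feed in the integral bounds of Proposition~\ref{integ_prop}. Fix a cutoff $\eta_n\in\mathcal{C}_c^\infty(\R^{N+1})$ with $0\le\eta_n\le1$, $\eta_n\equiv1$ on $\AA_n^3$, supported in the interior (relative to $\ov{\R^{N+1}_+}$) of the region $\big(B^+_{(\ov{C}+4)\si_n^{-\frac12}}(x_n,0)\setminus\ov{B^+}_{(\ov{C}+1)\si_n^{-\frac12}}(x_n,0)\big)$, and $|\nabla\eta_n|\le C\si_n^{\frac12}$. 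Since $u_n=0$ outside $\Om$, the relevant part of the support is $\ov{\AA_n^2}$ together with its flat boundary $\pa\AA_n^2\cap\{y=0\}$. We test the weak formulation of \eqref{ext_sub_pro} with $\phi=\eta_n^2U_n$, which is admissible in $\XX_\Om^s(\R_+^{N+1})$ because its trace $\eta_n^2(\cdot,0)u_n$ lies in $X_0(\Om)$.

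\textbf{Expanding the identity.} Expanding $\nabla(\eta_n^2U_n)$ on both the flat part and the bulk and using Young's inequality gives
\begin{align*}
&\int_{\Om}\eta_n^2|\nabla u_n|^2\dx+\int_{\R_+^{N+1}}y^{1-2s}\eta_n^2|\nabla U_n|^2\dx\dy\\
&\le C\int_{\Om}|\nabla_x\eta_n|^2u_n^2\dx+C\int_{\R_+^{N+1}}y^{1-2s}|\nabla\eta_n|^2U_n^2\dx\dy+\int_\Om\eta_n^2\big(\la|u_n|^p+|u_n|^{p_n}\big)\dx.
\end{align*}
Each term on the right is bounded as follows.

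\textbf{Bounding the right-hand side.} The first term is at most $C\si_n\int_{\pa\AA_n^2\cap\{y=0\}}u_n^2\dx\le C\si_n^{1-\frac N2}$ by \eqref{q-lemma_1} with $q=2$. The nonlinear term is at most $C\int_{\pa\AA_n^2\cap\{y=0\}}(|u_n|^p+|u_n|^{2^*}+1)\dx$. The parts involving $|u_n|^p$ and $|u_n|^{2^*}$ are $\le C\si_n^{-\frac N2}\le C\si_n^{1-\frac N2}$, again by \eqref{q-lemma_1}. For the constant term, $|u_n|^{p_n}\le |u_n|^{2^*}+1$ is too crude. Instead I use $|u_n|^{p_n}\le|u_n|^{p}+|u_n|^{2^*}$, valid since $p<p_n\le2^*$, which avoids the additive constant entirely. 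The bulk term is at most $C\si_n\int_{\AA_n^2}y^{1-2s}U_n^2\dx\dy$. By \eqref{q-lemma_2} with $q=2$ this is bounded by $C\si_n^{1-\frac{N+2-2s}{2}-\frac{2(s-1)}{2^*}}$. The exponent equals $1-\frac N2-(1-s)\big(\frac{2}{2^*}-1\big)=1-\frac N2-\frac{2(1-s)}{N}$, which is $\le 1-\frac N2$, as desired.

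\textbf{Main obstacle: admissibility.} The delicate point is justifying the testing, in particular the cutoff's interaction with $\pa\Om$. Where $\AA_n^2$ meets $\pa\Om$, the trace $\eta_n^2u_n$ must still vanish outside $\Om$; it does, since $u_n$ itself does. Finally, the left-hand side dominates the quantity in the statement, because $\eta_n\equiv1$ on $\AA_n^3$ and on its flat boundary, which completes the proof.
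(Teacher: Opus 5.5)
Your proposal is correct and is essentially the paper's own proof: test the extended equation with $\eta^2U_n$, absorb the cross terms by Young's inequality, and bound the cutoff terms and the nonlinearity using Proposition~\ref{integ_prop}. Three small remarks. The paper bounds the nonlinearity by $\la|u_n|^p+|u_n|^{p_n}\le C(|u_n|^{2^*}+u_n^2)$. The additive constant you took care to avoid would in any case only contribute $O(\si_n^{-N/2})$ over the flat annulus. Your intermediate exponent should read $1-\frac N2-(1-s)(1-\frac{2}{2^*})$, though the final value $1-\frac N2-\frac{2(1-s)}{N}$ is right.

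One caveat, which the paper also glosses over. Your remark that only $\ov{\AA_n^2}$ matters is true for the flat integrals but not for the bulk term, because $U_n(x,y)\neq0$ for $x\notin\Om$, $y>0$. So when $x_n$ is within $O(\si_n^{-1/2})$ of $\pa\Om$, you need \eqref{q-lemma_2} on the full half-annulus $\{(\ov{C}+1)\si_n^{-1/2}<|X-(x_n,0)|<(\ov{C}+4)\si_n^{-1/2},\,y>0\}$. That is, the argument of Proposition~\ref{integ_prop} must be run on that region, not only on $\AA_n^2$.
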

\begin{proof}
    Consider a cut-of{}f function $\eta\in \mathcal{C}_c^{\infty}(\AA_n^2)$ such that $\eta=1$ in $\AA_n^3$ and $|\nabla \eta|\leq C\si_n^{\frac12}$. Using the test function $\eta^2U_n$, we see that
\begin{align*}
    &\int_{\Om}\nabla u_n\cdot\nabla_x(\eta^2(x,0)u_n)\dx+\int_{\R_+^{N+1}}y^{1-2s}\nabla U_n\cdot\nabla(\eta^2U_n)\dx\dy\\
    &\qquad=\int_{\Om}(|u_n|^{p_n-2}+\la |u_n|^{p-2})\eta^2(x,0)u_n^2\dx\\
    &\qquad\leq C\int_{\Om}\left(|u_n|^{2^*-2}+1\right)\eta^2(x,0)u_n^2\dx\\
    &\qquad\leq C\int_{\pa\AA_n^2\cap\{y=0\}}|u_n|^{2^*}\dx+C\int_{\pa\AA_n^2\cap\{y=0\}}u_n^2\dx.
\end{align*}
For the gradient terms, using Young's inequality with $\eps=\frac14$, we see that 
\begin{align*}
    \int_{\Om}\nabla u_n\cdot\nabla_x(\eta^2(x,0)u_n)\dx&=\int_{\Om}\eta^2(x,0)|\nabla u_n|^2\dx+2\int_{\Om}u_n\eta(x,0)\nabla u_n\cdot\nabla \eta\dx\\
    &\geq \frac12\int_{\Om}\eta^2(x,0)|\nabla u_n|^2\dx-C\int_{\Om}u_n^2|\nabla \eta(x,0)|^2\dx,
\end{align*}
and similarly,
$$\int_{\R_+^{N+1}}y^{1-2s}\nabla U_n\cdot\nabla(\eta^2U_n)\dx\dy\geq\frac12\int_{\R_+^{N+1}}y^{1-2s}\eta^2|\nabla U_n|^2\dx\dy-C\int_{\R_+^{N+1}}y^{1-2s}U_n^2|\nabla \eta|^2\dx\dy.$$
Combining the above inequalities, we obtain
\begin{align*}
    \int_{\pa\AA_n^3\cap\{y=0\}}|\nabla u_n|^2\dx+\int_{\AA_n^3}y^{1-2s}|\nabla U_n|^2\dx\dy&\leq C\si_n\left(\int_{\pa\AA_n^2\cap\{y=0\}}u_n^2\dx+\int_{\AA_n^2}y^{1-2s}|U_n|^2\dx\dy\right)\\
    &+ C\int_{\pa\AA_n^2\cap\{y=0\}}|u_n|^{2^*}\dx+C\int_{\pa\AA_n^2\cap\{y=0\}}u_n^2\dx.
\end{align*}
Estimating the RHS of the above inequality using Proposition~\ref{integ_prop} yields us
\begin{align*}
    \int_{\pa\AA_n^3\cap\{y=0\}}|\nabla u_n|^2\dx+\int_{\AA_n^3}y^{1-2s}|\nabla U_n|^2\dx\dy&\leq C\si_n^{1-\frac{N}{2}}+C\si_n^{1-\left(\frac{N+2-2s}{2}+\frac{2(s-1)}{2^*}\right)}+C\si_n^{-\frac{N}{2}}\\
    &\leq C\si_n^{1-\frac{N}{2}}+C\si_n^{-\frac{N^2-2N+4-4s}{2N}}\leq C\si_n^{1-\frac{N}{2}},
\end{align*}
as required.
\end{proof}
\subsection{Proof of compactness results}\label{subsec_proof_uniform}
This subsection aims to prove Theorem \ref{Uniform_Theorem_CK}.

\noi \textbf{Proof of Theorem \ref{Uniform_Theorem_CK}:}
We start with the local Pohozaev's identity. Consider a cut-of{}f function $\phi_n$ with 
\begin{align*}
   0\leq \phi_n\leq 1, \quad  \phi_n=1 \text{ in } B_{(\ov{C}+2)\si_n^{-\frac12}}^{N+1}(x_n,0), \quad\mbox{and}\quad \text{supp}(\phi_n)\subseteq B_{(\ov{C}+3)\si_n^{-\frac12}}^{N+1}(x_n,0).
\end{align*}
Set $B_n^+:=B_{(\ov{C}+3)\si_n^{-\frac12}}^+(x_n,0)$ and $B_n^\eps:=B_n^+\cap\{y>\eps\}$, $\eps>0$. Let $X_\eps=(x_0,\eps)$ where $x_0$ will be chosen later.

\begin{figure}[ht]
\begin{tikzpicture}[scale=0.8, transform shape]
    % Define Styles
    \tikzset{
        ball/surface/.style={cyan!25, opacity=0.6, draw=cyan!70, thick},
        ball/slice/.style={cyan!15, opacity=0.4}
    }
    
    % Coordinates for the cylinder geometry
    \coordinate (Center) at (0,0);
    \def\rx{3.2} % Max x-width of the arbitrary blob
    \def\h{4.5}  % Height of the cylinder
    % Sequence point x_n near the boundary
    \coordinate (Xn) at (1.2, -0.5); 
    \def\Rball{3.4} % Large radius
    
    % --- 1. Draw Background of the Cylinder (Arbitrary Blob) ---
    % We use out=90 and in=90 at the edges so the vertical walls attach smoothly
    \draw[thick, gray, dashed] (\rx, 0) to[out=90, in=0] (1.5, 1.4) to[out=180, in=0] (-1, 0.9) to[out=180, in=90] (-\rx, 0);
    \draw[dashed, gray!50] (-\rx, 0) -- (-\rx, \h);
    \draw[dashed, gray!50] (\rx, 0) -- (\rx, \h);
    
    % --- 2. Draw the Full Disk Base (UNCLIPPED) ---
    % Shaded base filling
    \fill[ball/slice] (Xn) ellipse ({\Rball} and {\Rball*0.3});
    % Farther (back) edge of the disk - dashed
    \draw[cyan!70, thin, dashed] (Xn) ++(\Rball,0) arc (0:180:{\Rball} and {\Rball*0.3});
    % Near (front) edge of the disk - solid
    \draw[cyan!70, thin] (Xn) ++(-\Rball,0) arc (180:360:{\Rball} and {\Rball*0.3});
    
    % --- 3. Draw the Upper Half Ball Dome B^+ (CLIPPED) ---
    \begin{scope}
    % Clip to the upper half space relative to the base y-level of Xn
    \clip (-4, -0.5) rectangle (5, 5);
    % Shaded hemispherical dome - centered precisely at (Xn)
    \fill[ball/surface] (Xn) ++(\Rball,0) arc (0:180:\Rball) -- cycle;
    \end{scope}
    
    % --- 4. Draw Foreground of the Cylinder (Arbitrary Blob) ---
    \draw[thick] (-\rx, 0) to[out=-90, in=180] (-1.5, -1.6) to[out=0, in=180] (1, -0.9) to[out=0, in=-90] (\rx, 0);
    \draw[thick] (-\rx, 0) -- (-\rx, \h);
    \draw[thick] (\rx, 0) -- (\rx, \h);
    
    % Draw the top boundary of the cylinder at height \h
    \begin{scope}[yshift=\h cm]
        \draw[thick, gray] (\rx, 0) to[out=90, in=0] (1.5, 1.4) to[out=180, in=0] (-1, 0.9) to[out=180, in=90] (-\rx, 0);
        \draw[thick] (-\rx, 0) to[out=-90, in=180] (-1.5, -1.6) to[out=0, in=180] (1, -0.9) to[out=0, in=-90] (\rx, 0);
    \end{scope}
    
    % --- 5. Labels and Points ---
    % Point x_n with label moved to sit just above the point
    \filldraw[blue!80!black] (Xn) circle (2pt) node[above, yshift=2pt, black] {$(x_n, 0)$};
    
    % Label for Omega
    \node at (-2.4, 0.3) {$\Omega$};
    
    % Label for the Cylinder (Shifted to the left side)
    \node[left] at (-\rx, \h*0.95) {$\Omega \times (0, \infty)$};
    
    % Label for the half ball
    \node[cyan!50!black, font=\small] at ($(Xn) + (-1.6, 2.4)$) {$B_n^+$};
        
    % Vertical Extension Axis y (Height adjusted to \h)
    \draw[->, thick] (4, -0.5) -- (4, \h) node[above] {$y$};
    \draw[dotted, gray] (4, -0.5) -- (Xn);
    
    % Label y=0 shifted slightly left so it doesn't cross the lines
    \node[below left] at (4, -0.5) {$y=0$};
\end{tikzpicture}
\caption{A schematic illustration of the cylinder $\Omega\times(0,\infty)$ and the upper half-ball $B_n^+$, where $\Omega$ represents an arbitrary bounded domain.}
\label{fig:halfball}
\end{figure}
Multiplying $((X-X_{\eps})\cdot\nabla U_n)\phi_n$ with $\text{div}(y^{1-2s}\nabla U_n)$ and integrating over $B_n^{\eps}$, we have
\begin{align}\label{30-26-1}
    0&=\int_{B_n^{\eps}}\text{div}(y^{1-2s}\nabla U_n)((X-X_{\eps})\cdot\nabla U_n)\phi_n\dx\dy\nonumber\\
    &=-\int_{B_n^{\eps}}y^{1-2s}\nabla U_n\cdot\nabla(((X-X_{\eps})\cdot\nabla U_n)\phi_n)\dx\dy\nonumber\\&\quad+\int_{\pa B_n^{\eps}}((X-X_{\eps})\cdot\nabla U_n)\phi_n y^{1-2s}(\nabla U_n\cdot\nu_N)\d\HH^N.
\end{align}
Expanding the first term of \eqref{30-26-1}, we obtain
\begin{align*}
    &-\int_{B_n^{\eps}}y^{1-2s}\nabla U_n\cdot\nabla(((X-X_{\eps})\cdot\nabla U_n)\phi_n)\dx\dy\\
    &=-\int_{B_n^{\eps}}y^{1-2s}\nabla U_n\cdot\left[\nabla((X-X_{\eps})\cdot\nabla U_n)\phi_n+\nabla\phi_n((X-X_{\eps})\cdot\nabla U_n)\right]\dx\dy\\
    &=-\int_{B_n^{\eps}}y^{1-2s}|\nabla U_n|^2\phi_n\dx\dy-\int_{B_n^{\eps}}y^{1-2s}\nabla\left(\frac12|\nabla U_n|^2\right)\cdot(X-X_{\eps})\phi_n\dx\dy\\
    &\quad-\int_{B_n^{\eps}}y^{1-2s}((X-X_{\eps})\cdot\nabla U_n)(\nabla U_n\cdot\nabla\phi_n)\dx\dy\\
    &=-\int_{B_n^{\eps}}y^{1-2s}|\nabla U_n|^2\phi_n\dx\dy+\int_{B_n^{\eps}}\frac12|\nabla U_n|^2\nabla\cdot((X-X_{\eps})y^{1-2s}\phi_n)\dx\dy\\
    &\quad-\int_{\pa B_n^{\eps}}y^{1-2s}\frac12|\nabla U_n|^2\phi_n((X-X_{\eps})\cdot\nu_N)\d\HH^N-\int_{B_n^{\eps}}y^{1-2s}((X-X_{\eps})\cdot\nabla U_n)(\nabla U_n\cdot\nabla\phi_n)\dx\dy\\
    &=\frac{N-2s}{2}\int_{B_n^{\eps}}y^{1-2s}|\nabla U_n|^2\phi_n\dx\dy+\frac12\int_{B_n^{\eps}}y^{1-2s}|\nabla U_n|^2(X-X_{\eps})\cdot\nabla\phi_n\dx\dy\\
    &\quad-\int_{B_n^{\eps}}y^{1-2s}((X-X_{\eps})\cdot\nabla U_n)(\nabla U_n\cdot\nabla\phi_n)\dx\dy.
\end{align*}
For the first term in the sixth line of the above identities, we use $X-X_{\eps}\perp\nu_N$ on $\pa B_n^{\eps}\cap\{y={\eps}\}$ and $\phi_n=0$ on $\pa B_n^{\eps}\cap\{y>{\eps}\}$. By dominated convergence theorem and the fact that $\{U_n\}$ is bounded in $\XX_{\Om}^s(\R_+^{N+1})$, for any fixed $n \in \N$, we get
\begin{align}\label{30-26-2}
    &\lim_{\eps\to0^+}-\int_{B_n^{\eps}}y^{1-2s}\nabla U_n\cdot\nabla(((X-X_{\eps})\cdot\nabla U_n)\phi_n)\dx\dy\nonumber\\
    &=\frac{N-2s}{2}\int_{B_n^+}y^{1-2s}|\nabla U_n|^2\phi_n\dx\dy+\frac12\int_{B_n^+}y^{1-2s}|\nabla U_n|^2(X-X_0)\cdot\nabla\phi_n\dx\dy\nonumber\\
    &\quad-\int_{B_n^+}y^{1-2s}((X-X_0)\cdot\nabla U_n)(\nabla U_n\cdot\nabla\phi_n)\dx\dy,
\end{align}
where $X_0=(x_0,0)$.
For brevity, for any fixed $n \in \N$, we set
\begin{align*}
     &B_n^N:=\pa B_n^+\cap\{y=0\}=B_{(\ov{C}+3)\si_n^{-\frac12}}^N(x_n), \text{ and } \\
     &B_{n,\eps}^N:=\left\{x\in \RR:|x-x_n| \le  \left((\ov{C}+3)^2\si_n^{-1}-\eps^2 \right)^{\frac{1}{2}}\right\}, \text{ where } \eps<(\ov{C}+3)\si_n^{-\frac12}.
\end{align*}
Note that $B_{n,\eps}^N \to  B_n^N$, as $\eps\to0$.
Next, for the boundary integral in \eqref{30-26-1}, we split
\begin{align}\label{2-26-1}
    &\int_{\pa B_n^{\eps}}((X-X_{\eps})\cdot\nabla U_n)\phi_n y^{1-2s}(\nabla U_n\cdot\nu_N)\d\HH^N\nonumber\\
    &=\left(\int_{\pa B_n^{+}\cap\{y>\eps\}}+\int_{ B_n^{+}\cap\{y=\eps\}}\right)((X-X_{\eps})\cdot\nabla U_n)\phi_n y^{1-2s}(\nabla U_n\cdot\nu_N)\d\HH^N\nonumber\\
    &=\int_{B_n^{+}\cap\{y=\eps\}}((X-X_{\eps})\cdot\nabla U_n)\phi_n y^{1-2s}(\nabla U_n\cdot\nu_N)\d\HH^N\nonumber\\
    %&=\int_{B_n^{+}\cap\{y=\eps\}}((x-x_0)\cdot\nabla_xU_n)\phi_n\left(-y^{1-2s}\frac{\pa U_n}{\pa y}\right)\d\HH^N\nonumber\\
    &=\int_{B_{n,\eps}^N}((x-x_0)\cdot\nabla_xU_n(x,\eps))\phi_n(x,\eps)\left(-\eps^{1-2s}\frac{\pa U_n}{\pa y}(x,\eps)\right)\dx.
\end{align}
From \cite{anup-mitesh2023, SuVaWeZh1}, $u_n\in \mathcal{C}^2(\Om)\cap\mathcal{C}^{1,\al}(\ov{\Om})$ for every $\al\in(0,\min\{1,2-2s\})$. Using $u_n=0$ in $\RR \setminus \Omega$, we get $u_n\in \mathcal{C}^{0,1}(\RR)$. Thus for any $y>0$, there exists $C>0$ independent of $y$ such that $$\|\nabla_x U(\cdot,y)\|_{L^{\infty}(\RR)}=\|P\ast\nabla u_n\|_{L^{\infty}(\RR)}\leq \|\nabla u_n\|_{L^{\infty}(\RR)}\leq C.$$Moreover, we have the following pointwise convergence$$\nabla_x U(x,\eps)\to\nabla u(x)\text{ for every }x\in B_n^N\setminus\pa\Omega\text{ as }\eps\to0.$$Further, using \eqref{DtoN}, for every $x\in\RR$, $$-\eps^{1-2s}\pa_y U_n(x,\eps)\to\fra u_n(x)\text{ as }\eps\to0.$$
By Lemma \ref{convergence}, for any $\eps>0$ and $x\in B_n^N$, we obtain 
$$\eps^{1-2s}|\pa_y U_n(x,\eps)|\leq C(1+\chi_{\{s<\frac12\}}+\chi_{\{s=\frac12\}}(|\log(\text{dist}(x,\pa\Om))|+\text{dist}(x,\pa\Om)^{-\frac12})+\chi_{\{s>\frac12\}}\text{dist}(x,\pa\Om)^{1-2s}).$$
Finally observe that $$|\log(\text{dist}(x,\pa\Om))|,\text{dist}(x,\pa\Om)^{-\frac12}, \text{dist}(x,\pa\Om)^{1-2s}\in L^1(B_n^N).$$
Hence applying dominated convergence theorem to the last line of \eqref{2-26-1} yields
\begin{align}\label{30-26-3}
&\lim_{\eps\to0^+}\int_{\pa B_n^{\eps}}((X-X_{\eps})\cdot\nabla U_n)\phi_n y^{1-2s}(\nabla U_n\cdot\nu_N)\d\HH^N\nonumber\\
&=\int_{B_n^N}((x-x_0)\cdot\nabla u_n(x))\phi_n(x,0)\fra u_n(x)\dx.
\end{align}

Putting \eqref{30-26-2} and \eqref{30-26-3} back into  \eqref{30-26-1} we obtain
\begin{align}\label{30-26-4}
0=&\frac{N-2s}{2}\int_{B_n^+}y^{1-2s}|\nabla U_n|^2\phi_n\dx\dy+\frac12\int_{B_n^+}y^{1-2s}|\nabla U_n|^2(X-X_0)\cdot\nabla\phi_n\dx\dy\nonumber\\
    &\qquad\qquad-\int_{B_n^+}y^{1-2s}((X-X_0)\cdot\nabla U_n)(\nabla U_n\cdot\nabla\phi_n)\dx\dy\nonumber\\
&\qquad\qquad+\int_{B_n^N}((x-x_0)\cdot\nabla u_n(x))\phi_n(x,0)\fra u_n(x)\dx.
\end{align}

Set $f_n(t):=\la |t|^{p-2}t+|t|^{p_n-2}t$ and $F_n(t):=\int_0^tf_n(s)\ds$. Then, using $u_n=0$ outside $\Om$, we get
\begin{align}\label{30-26-5}
    &\int_{B_n^N}((x-x_0)\cdot\nabla u_n(x))\phi_n(x,0)\fra u_n(x)\dx\nonumber\\
    % &=\lim_{\eps \ra 0} \int_{B_{n,\eps}^N}((x-x_0)\cdot\nabla_xU_n(x,\eps))\phi_n(x,\eps)\left(-\eps^{1-2s}\frac{\pa U_n}{\pa y}(x,\eps)\right)\dx \\
    %&=\left(\int_{\pa B_n^+\cap\{y>0\}}+\int_{\pa B_n^+\cap\{y=0\}}\right)((X-X_0)\cdot\nabla U_n)\phi_n y^{1-2s}(\nabla U_n\cdot\nu_N)\d\HH^N\\
    %&=\int_{\pa B_n^+\cap\{y=0\}}-y^{1-2s}\frac{\pa U_n}{\pa y}((X-X_0)\cdot\nabla U_n)\phi_n \d\HH^N\\
    &=\int_{B_n^N\cap\Om}(f_n(u_n)+\Delta u_n)((x-x_0)\cdot\nabla u_n)\phi_n(x,0) \dx\nonumber\\
    &=-\int_{B_n^N\cap\Om}F_n(u_n)\nabla\cdot((x-x_0)\phi_n)\dx+\int_{\pa (B_n^N\cap\Om)}F_n(u_n)\phi_n(x-x_0)\cdot\nu_{N-1}\d\HH^{N-1}\nonumber\\
    &\quad-\int_{B_n^N\cap\Om}\nabla u_n\cdot\nabla(((x-x_0)\cdot\nabla u_n)\phi_n)\dx+\int_{\pa (B_n^N\cap\Om)}(\nabla u_n\cdot\nu_{N-1})((x-x_0)\cdot\nabla u_n)\phi_n\d\HH^{N-1}\nonumber\\
    &=-N\int_{B_n^N\cap\Om}F_n(u_n)\phi_n(x,0)\dx-\int_{B_n^N\cap\Om}F_n(u_n)((x-x_0)\cdot\nabla_x\phi_n(x,0))\dx\nonumber\\
    &\quad+\frac{N-2}{2}\int_{B_n^N\cap\Om}|\nabla u_n|^2\phi_n(x,0)\dx+\frac12\int_{B_n^N\cap\Om}|\nabla u_n|^2((x-x_0)\cdot\nabla_x\phi_n(x,0))\dx\nonumber\\
    &-\int_{B_n^N\cap\Om}(\nabla u_n\cdot\nabla_x\phi_n(x,0))((x-x_0)\cdot\nabla u_n))\dx-\int_{ B_n^N\cap\pa\Om}\frac12|\nabla u_n|^2\phi_n((x-x_0)\cdot\nu_{N-1})\d\HH^{N-1}\nonumber\\
    &\quad +\int_{ B_n^N\cap\pa\Om}(\nabla u_n\cdot\nu_{N-1})((x-x_0)\cdot\nabla u_n)\phi_n\d\HH^{N-1}.
\end{align}
Inserting \eqref{30-26-5} into \eqref{30-26-4}, we get
\begin{align}
    0&=\frac{N-2s}{2}\int_{B_n^+}y^{1-2s}|\nabla U_n|^2\phi_n\dx\dy+\frac12\int_{B_n^+}y^{1-2s}|\nabla U_n|^2(X-X_0)\cdot\nabla\phi_n\dx\dy\nonumber\\
    &\quad-\int_{B_n^+}y^{1-2s}((X-X_0)\cdot\nabla U_n)(\nabla U_n\cdot\nabla\phi_n)\dx\dy\nonumber\\
    &\quad-N\int_{B_n^N\cap\Om}F_n(u_n)\phi_n(x,0)\dx-\int_{B_n^N\cap\Om}F_n(u_n)((x-x_0)\cdot\nabla_x\phi_n(x,0))\dx\nonumber\\
    &\quad+\frac{N-2}{2}\int_{B_n^N\cap\Om}|\nabla u_n|^2\phi_n(x,0)\dx+\frac12\int_{B_n^N\cap\Om}|\nabla u_n|^2((x-x_0)\cdot\nabla_x\phi_n(x,0))\dx\nonumber\\
    &-\int_{B_n^N\cap\Om}(\nabla u_n\cdot\nabla_x\phi_n(x,0))((x-x_0)\cdot\nabla u_n))\dx -\int_{ B_n^N\cap\pa\Om}\frac12|\nabla u_n|^2\phi_n((x-x_0)\cdot\nu_{N-1})\d\HH^{N-1}\nonumber\\
    &\quad  +\int_{ B_n^N\cap\pa\Om}(\nabla u_n\cdot\nu_{N-1})((x-x_0)\cdot\nabla u_n)\phi_n\d\HH^{N-1}.\label{poh_1.1}
\end{align}
On the other hand multiplying $U_n\phi_n$ with $\text{div}(y^{1-2s}\nabla U_n)$ and integrating by parts over $B_n^+$ yields
\begin{align}
    0&=\int_{B_n^+}\text{div}(y^{1-2s}\nabla U_n)U_n\phi_n\dx\dy\nonumber\\
    &=-\int_{B_n^+}y^{1-2s}\nabla U_n\cdot\nabla(U_n\phi_n)\dx\dy+\int_{\pa B_n^+}y^{1-2s}(\nabla U_n\cdot\nu_N)U_n\phi_n\d\HH^{N}\nonumber\\
    &=-\int_{B_n^+}y^{1-2s}|\nabla U_n|^2\phi_n\dx\dy-\int_{B_n^+}y^{1-2s}U_n(\nabla U_n\cdot\nabla\phi_n)\dx\dy\\&\quad+\int_{\pa B_n^{+}\cap\{y=0\}}-y^{1-2s}\frac{\pa U_n}{\pa y}U_n\phi_n\d\HH^{N}\nonumber\\
    &=-\int_{B_n^+}y^{1-2s}|\nabla U_n|^2\phi_n\dx\dy-\int_{B_n^+}y^{1-2s}U_n(\nabla U_n\cdot\nabla\phi_n)\dx\dy\\&\quad+\int_{B_n^{N}\cap\Om}(f_n(u_n)+\Delta u_n)u_n\phi_n(x,0)\dx\nonumber\\
    &=-\int_{B_n^+}y^{1-2s}|\nabla U_n|^2\phi_n\dx\dy-\int_{B_n^+}y^{1-2s}U_n(\nabla U_n\cdot\nabla\phi_n)\dx\dy+\int_{ B_n^{N}\cap\Om}f_n(u_n)u_n\phi_n(x,0)\dx\nonumber\\
    &\quad-\int_{B_n^N\cap\Om}\nabla u_n\cdot\nabla(u_n\phi_n(x,0))\dx+\int_{\pa (B_n^N\cap\Om)}(\nabla u_n\cdot\nu_{N-1})u_n\phi_n(x,0)\d\HH^{N-1}\nonumber\\
    &=-\int_{B_n^+}y^{1-2s}|\nabla U_n|^2\phi_n\dx\dy-\int_{B_n^+}y^{1-2s}U_n(\nabla U_n\cdot\nabla\phi_n)\dx\dy+\int_{ B_n^{N}}f_n(u_n)u_n\phi_n(x,0)\dx\nonumber\\
    &\quad-\int_{B_n^N\cap\Om}|\nabla u_n|^2\phi_n(x,0)\dx-\int_{B_n^N\cap\Om}u_n\nabla u_n\cdot\nabla_x\phi_n(x,0)\dx.\label{poh_1.2}
\end{align}
Adding \eqref{poh_1.1} and $\frac{N-2}{2}$\eqref{poh_1.2}, we obtain
\begin{align*}
    &\int_{B_n^N\cap\Om}\left(NF_n(u_n)-\frac{N-2}2u_nf_n(u_n)\right)\phi_n(x,0)\dx-(1-s)\int_{B_n^+}y^{1-2s}|\nabla U_n|^2\phi_n\dx\dy\\
    &=\frac12\int_{B_n^+}y^{1-2s}|\nabla U_n|^2(X-X_0)\cdot\nabla\phi_n\dx\dy+\frac12\int_{B_n^N\cap\Om}|\nabla u_n|^2(x-x_0)\cdot\nabla_x\phi_n(x,0)\dx\\
    &\quad-\int_{B_n^+}y^{1-2s}((X-X_0)\cdot\nabla U_n)(\nabla U_n\cdot\nabla\phi_n)\dx\dy-\int_{B_n^N\cap\Om}(\nabla u_n\cdot\nabla_x\phi_n(x,0))((x-x_0)\cdot\nabla u_n)\dx\\
    &\quad-\int_{B_n^N\cap\Om}F_n(u_n)((x-x_0)\cdot\nabla_x\phi_n(x,0))\dx\\
    &\quad-\int_{ B_n^N\cap\pa\Om}\frac12|\nabla u_n|^2\phi_n((x-x_0)\cdot\nu_{N-1})\d\HH^{N-1}+\int_{ B_n^N\cap\pa\Om}(\nabla u_n\cdot\nu_{N-1})((x-x_0)\cdot\nabla u_n)\phi_n\d\HH^{N-1}.\\
    &\quad-\frac{N}{2^*}\left(\int_{B_n^+}y^{1-2s}U_n(\nabla U_n\cdot\nabla\phi_n)\dx\dy+\int_{B_n^N\cap\Om}u_n(\nabla u_n\cdot\nabla_x\phi_n(x,0))\dx\right).
\end{align*}
For large $n$, we choose $X_0=(x_0,0)\in\Om^c$ such that $|x-x_0|\leq C\si_n^{-\frac12}$ in $B_n^N$ and $(x-x_0)\cdot\nu_{N-1}\leq0$ on $B_n^N\cap\pa\Om$. Combining this with the fact that $u_n=0$ on $\pa\Om$ yields
\begin{align*}
    &-\int_{ B_n^N\cap\pa\Om}\frac12|\nabla u_n|^2\phi_n((x-x_0)\cdot\nu_{N-1})\d\HH^{N-1}+\int_{ B_n^N\cap\pa\Om}(\nabla u_n\cdot\nu_{N-1})((x-x_0)\cdot\nabla u_n)\phi_n\d\HH^{N-1}\\
    &=\frac12\int_{ B_n^N\cap\pa\Om}|\nabla u_n|^2\phi_n((x-x_0)\cdot\nu_{N-1})\d\HH^{N-1}\leq0.
\end{align*}
Furthermore, using $(\frac N{p_n}-\frac N{2^*})\int_{B_n^N}|u_n|^{p_n}\dx\geq0$,
\begin{align*}
    &\left(\frac Np-\frac N{2^*}\right)\la\int_{B_n^N \cap \Omega}|u_n|^p\phi_n(x,0)\dx\\
    &\leq\frac12\int_{B_n^+}y^{1-2s}|\nabla U_n|^2(X-X_0)\cdot\nabla\phi_n\dx\dy+\frac12\int_{B_n^N\cap\Om}|\nabla u_n|^2(x-x_0)\cdot\nabla_x\phi_n(x,0)\dx\\
    &\quad-\int_{B_n^+}y^{1-2s}((X-X_0)\cdot\nabla U_n)(\nabla U_n\cdot\nabla\phi_n)\dx\dy-\int_{B_n^N\cap\Om}(\nabla u_n\cdot\nabla_x\phi_n(x,0))((x-x_0)\cdot\nabla u_n)\dx\\
    &\quad-\int_{B_n^N\cap\Om}F_n(u_n)((x-x_0)\cdot\nabla_x\phi_n(x,0))\dx+(1-s)\int_{B_n^+}y^{1-2s}|\nabla U_n|^2\phi_n\dx\dy\\
    &\quad-\frac{N}{2^*}\left(\int_{B_n^+}y^{1-2s}U_n(\nabla U_n\cdot\nabla\phi_n)\dx\dy+\int_{B_n^N\cap\Om}u_n(\nabla u_n\cdot\nabla_x\phi_n(x,0))\dx\right).
\end{align*}
We set $$\BB_n:=\left(B^+_{(\ov{C}+3)\si_n^{-\frac{1}{2}}}(x_n,0)\setminus \ov{B^+}_{(\ov{C}+2)\si_n^{-\frac{1}{2}}}(x_n,0)\right)\cap\{y>0\}.$$
Now we use that $\text{supp}(\nabla\phi_n)\cap\R_+^{N+1}\subset\BB_n$,  $|\nabla\phi_n|\leq C\si_n^{\frac12}$. Since, $|x-x_0|\leq C\si_n^{-\frac12}$ in $B_n^N$, it holds $|X-X_0|\leq C\si_n^{-\frac12}$ in $B_n^+$. Hence, using Propositions \ref{integ_prop} and \ref{gradient_prop}, we obtain
\begin{align*}
    &\left(\frac Np-\frac N{2^*}\right)\la\int_{B_n^N \cap \Omega}|u_n|^p\phi_n(x,0)\dx\\
    &\leq C\int_{B_n^+}y^{1-2s}|\nabla U_n|^2\dx\dy+\int_{\pa\AA_n^3\cap\{y=0\}}(|u_n|^p+|u_n|^{p_n}+|\nabla u_n|^2)\dx\\
    &\quad+C\int_{\BB_n}y^{1-2s}(\si_n|U_n|^2+|\nabla U_n|^2)\dx\dy+\int_{\pa\AA_n^3\cap\{y=0\}}(\si_n|u_n|^2+|\nabla u_n|^2)\dx\\
    &\leq C\int_{B_n^+}y^{1-2s}|\nabla U_n|^2\dx\dy+C\si_n^{1-\frac N2}+C\si_n^{-\frac{N}{2}}\\
    &\leq C\int_{B_n^+}y^{1-2s}|\nabla U_n|^2\dx\dy+C\si_n^{1-\frac N2}.
\end{align*}
For the first term in the third line of the above inequalities, we have used  H\"{o}lder's inequality and the inequality (see \cite[Lemma 2.1]{TaXi})
$$\int_{\BB_n}y^{1-2s}|U|^{2}\dx\dy\leq C\si_n^{-1}\int_{\BB_n}y^{1-2s}|\nabla U|^{2}\dx\dy.$$
Using the (PS) decomposition, $p>\frac{N}{N-2}$ and arguing as in \cite{CaPeYa}, 
\begin{align*}
    \la\int_{B_n^N}u_n^p\phi_n(x,0)\dx\ge\la \int_{B_{\si_n^{-1}}(x_n)\cap\Om}u_n^p\dx\geq C\si_n^{\frac {N-2}2p-N}.
\end{align*}
Using Remark \ref{error_decay}, we see that 
\begin{align*}
    \int_{B_n^+}y^{1-2s}|\nabla U_n|^2\phi_n\dx\dy&\leq \int_{B_n^+}y^{1-2s}|\nabla U_n|^2\dx\dy\\
    &=\int_{B_n^+}y^{1-2s}|\nabla U_\infty|^2\dx\dy+O\left(\sum_{i=1}^k(\si_{n}^i)^{2s-2}\right)\\
&\leq C\si_n^{-\frac{N+2-2s}{2}}+C\si_n^{2s-2}\leq C\si_n^{2s-2}.
\end{align*}
\noi\textbf{Case-1:} Suppose $N>6-4s$, then combining all the estimates we get
$$\si_n^{\frac{N-2}{2}p-N}\leq C\si_n^{2s-2}+C\si_n^{1-\frac{N}{2}}\leq C\si_n^{2s-2}.$$
This implies that $$p\leq\frac{2(N-2+2s)}{N-2},$$
which is a contradiction to the given hypothesis on $p$.\\
\noi\textbf{Case-2:} Now suppose $N\leq 6-4s$, then we get
$$\si_n^{\frac{N-2}{2}p-N}\leq C\si_n^{2s-2}+C\si_n^{1-\frac{N}{2}}\leq C\si_n^{1-\frac{N}{2}}.$$
This implies that 
$$p\leq \frac{N+2}{N-2},$$
which contradicts the given hypothesis on $p$.

Therefore, the (PS) decomposition (Proposition \ref{PS}) holds without any $\Psi_n^i$ and  as a consequence, $U_n$ strongly converges in $\XX_{\Om}^s(\R_+^{N+1})$. This completes the proof.  \qed
\section{Infinitely many sign-changing solutions}\label{infinite}
In this section, as an application of Theorem \ref{Uniform_Theorem} and following the strategy developed in \cite{ScZo}, we prove the existence of infinitely many sign-changing solutions to
\begin{equation*}
\begin{cases}
            \del u+\fra u=\lambda |u|^{p-2}u+|u|^{2^\ast-2}u&\text{in }\Om,\\
            u=0&\text{in }\RR \setminus \Om,
        \end{cases}
\end{equation*}
where $N,s,p$ be as in Theorem~\ref{Existence_Theorem}. 

Let $0<\la_1<\la_2\leq\la_3\leq\cdots$ be the eigenvalues of $(-\Delta+\fra,\Om)$ and $\phi_k$ be the eigenfunction corresponding to $\la_k$. The set $\{\phi_k\}$ forms an orthonormal basis of $L^2(\Om)$ and (after a rescaling) also an orthonormal basis of $X_0(\Om)$. Define $E_k=\text{span}\{\phi_1,\ldots,\phi_k\}$. These properties follow from standard variational methods and the theory of compact self-adjoint operators. Thus $$X_0(\Om)=\ov{\cup_{k=1}^\infty E_k}^{\rho(\cdot)},\quad \text{dim}(E_k)=k,\quad E_k\subset E_{k+1}.$$
We choose a sequence $p_n\in(p,2^*)$ such that $p_n\to 2^*$. As $p_n\in(p,2^*)$, there exists $C_0>0$ independent of $n$ such that
\begin{equation}\label{A_0}
    \|\cdot\|_{p_n}\leq C_0\rho(\cdot) \text{ in }X_0(\Om),\quad X_0(\Om)\Subset L^{p_n}(\Om).\tag{$A_0$}
\end{equation}
We consider the functional $$I_{n,\la}(u):=\frac12\rho(u)^2-\frac{\la}{2}\|u\|_p^p-\frac1{p_n}\|u\|_{p_n}^{p_n}, \; \forall \, u \in X_0.$$
Notice that for any $u,v \in X_0(\Om)$,
\begin{align*}
    I_{n,\la}'(u)(v)=\int_{\Om}\nabla u\cdot\nabla v\dx+\A(u,v)-\la\int_{\Om}|u|^{p-2}uv\dx-\int_{\Om}|u|^{p_n-2}uv\dx.
\end{align*}
By Riesz representation theorem, there exists $L(u), G(u)\in X_0(\Om)$ such that $$\langle L(u),v\rangle=\la\int_{\Om}|u|^{p-2}uv\dx,\quad \langle G(u),v\rangle=\int_{\Om}|u|^{p_n-2}uv\dx.$$
Hence $\nabla I_{n,\la}=u- K_{n,\la}(u)$ where $K_{n,\la}(u)=L(u)+G(u)$. 
Observe that both $L$, $G$ are continuous and as a result $K_{n,\la}:X_0(\Om)\to X_0(\Om)$ is continuous. Let $u_n\in X_0(\Om)$ be a critical point of $I_{n,\la}$ i.e. $u_n$ weakly solves $$\del u+\fra u=\la |u|^{p-2}u+|u|^{p_n-2}u \text{ in }\Om,\quad u=0\text{ in }\RR \setminus \Omega.$$ 
By \cite{SuVaWeZh1,anup-mitesh2023}, $u_n\in \mathcal{C}^{1,\al}(\ov{\Om})$ for some $\al \in (0,1)$.
%By \cite{SuVaWeZh1}, $u_n\in \mathcal{C}^{1,\al}(\ov{\Om})$ for any $\al\in(0,\min\{1,2-2s\})$. 
Now consider the map $I_{n,\la}''(u_n):X_0(\Om)\to X_0(\Om)^*$, defined for any $v,w\in X_0(\Om)$ by
$$I_{n,\la}''(u_n)[v,w]=\int_{\Om}\nabla v\cdot\nabla w\dx+\A(v,w)-\la(p-1)\int_{\Om}|u_n|^{p-2}vw\dx-(p_n-1)\int_{\Om}|u_n|^{p_n-2}vw\dx.$$
Observe that if $v\in \text{ker}(I_{n,\la}''(u_n))$ then $v$ weakly solves $$\del v+\fra v=\la(p-1)|u_n|^{p-2}v+(p_n-1)|u_n|^{p_n-2}v \text{ in }\Om,\quad v=0\text{ in }\RR \setminus \Omega.$$
For $f\in L^2(\Om)$, we define $K_nf\in X_0(\Om)$ to be the unique solution of the problem $$\del u+\fra u=\la(p-1)|u_n|^{p-2}f+(p_n-1)|u_n|^{p_n-2}f \text{ in }\Om,\quad u=0\text{ in }\RR \setminus \Omega.$$ Now, $K_n$ is well defined by the Riesz Representation theorem. Since $u_n\in L^\infty(\Om)$,
\begin{align*}
    \rho(K_nf)^2&\leq \left( \la(p-1)\|u_n\|_{\infty}^{p-2}+(p_n-1)\|u_n\|_{\infty}^{p_n-2}\right)\|f\|_2\|K_nf\|_2\\
    &\leq\left(\la(p-1)\|u_n\|_{\infty}^{p-2}+(p_n-1)\|u_n\|_{\infty}^{p_n-2}\right)\|f\|_{2}\rho(K_nf).
\end{align*}
Using the fact that $X_0(\Om)\Subset L^2(\Om)$, each  $K_n$ is a bounded, linear, and compact operator. Thus, by Fredholm alternative, $I_{n,\la}''(u_n)$ is a Fredholm operator.

\begin{remark}
   Let $\KK_n=\{u\in X_0:I_{n,\la}'(u)=0\}$, $\PP=\{u\in X_0: u\geq0\}$. Then $\PP$ is a closed convex (thus weakly closed) positive cone of $X_0(\Om)$. Since eigenfunctions $\phi_k$ are sign-changing for $k\geq2$, $\pm\PP\cap(E_k^\perp\setminus\{0\})=\emptyset$ for $k\geq2$. For $\mu>0$, define $\DD(\mu)=\{u\in X_0:\text{dist}(u,\PP)<\mu\}$. Then $\DD(\mu)$ is an open convex set containing $\PP$ in its interior. Further, $\DD^*=\DD^*(\mu):=\DD(\mu)\cup(-\DD(\mu))$ and $\SS^*=X_0(\Om)\setminus\DD^*$. 
\end{remark}

\begin{lemma}\label{lemma_A_1}
    Let $\la>0$. For any  $\mu_0>0$ small enough,   $K_{n,\la}\left(\pm\DD(\mu_0)\right) \subset \pm\DD(\mu)\subset \pm\DD(\mu_0)$ for some $\mu\in(0,\mu_0)$. Moreover, $\pm\DD(\mu_0)\cap\KK_n \subset \pm\PP$.    
\end{lemma}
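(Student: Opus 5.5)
Following Schechter--Zou, I will estimate $\operatorname{dist}(K_{n,\la}(u),\PP)$ in terms of $\operatorname{dist}(u,\PP)$, working with negative parts.

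Write $v=K_{n,\la}(u)$, so that $\langle v,\phi\rangle=\int_\Om(\la|u|^{p-2}u+|u|^{p_n-2}u)\phi\dx$ for all $\phi\in X_0(\Om)$. Test with $\phi=v^-$, where $v^-=\max\{-v,0\}$ and $v=v^+-v^-$.

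On the left, the local part equals $-\|\nabla v^-\|_2^2$. The nonlocal part is $\A(v,v^-)=-[v^-]_s^2-\iint(v^+(x)v^-(y)+v^+(y)v^-(x))\d\mu\le -[v^-]_s^2$, as in the computation preceding \eqref{est-1}. Hence
\[
\rho(v^-)^2\le -\langle v,v^-\rangle .
\]
On the right,
\[
-\int_\Om(\la|u|^{p-2}u+|u|^{p_n-2}u)v^-\dx\le\int_\Om(\la|u^-|^{p-1}+|u^-|^{p_n-1})v^-\dx ,
\]
since the positive part of $u$ contributes a nonpositive term.

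Apply H\"older and the uniform embedding \eqref{A_0} (and the embedding into $L^p$). This gives
\[
\rho(v^-)^2\le \left(\la\|u^-\|_p^{p-1}+\|u^-\|_{p_n}^{p_n-1}\right)C\rho(v^-),
\]
so $\rho(v^-)\le C(\|u^-\|_p^{p-1}+\|u^-\|_{p_n}^{p_n-1})$ with $C$ independent of $n$. The key fact is that for every $w\in\PP$ we have $|u^-|\le|u-w|$ pointwise, so for $q\in\{p,p_n\}$
\[
\|u^-\|_q\le \inf_{w\in\PP}\|u-w\|_q\le C_0\operatorname{dist}(u,\PP).
\]
Moreover $\operatorname{dist}(v,\PP)\le\rho(v-v^+)=\rho(v^-)$, because $v^+\in\PP$; this step uses that truncation preserves $X_0$. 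Combining, with $d=\operatorname{dist}(u,\PP)$,
\[
\operatorname{dist}(K_{n,\la}(u),\PP)\le C\left(d^{p-1}+d^{p_n-1}\right).
\]
Since $p>2$ and $p_n>p$, choose $\mu_0$ small so that $C(\mu_0^{p-2}+\mu_0^{p_n-2})\le\frac12$ for all $n$; this is uniform in $n$ because $\mu_0<1$ and $p_n-2>p-2$. Then for $u\in\DD(\mu_0)$ we get $\operatorname{dist}(K_{n,\la}u,\PP)\le\frac12 d<\frac12\mu_0$. So $K_{n,\la}(\DD(\mu_0))\subset\DD(\mu)$ with $\mu=\mu_0/2$. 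The case $-\DD(\mu_0)$ follows because $K_{n,\la}$ is odd.

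For the last claim, take $u\in\DD(\mu_0)\cap\KK_n$. Then $u=K_{n,\la}(u)$, so $\operatorname{dist}(u,\PP)\le\frac12\operatorname{dist}(u,\PP)$, forcing $\operatorname{dist}(u,\PP)=0$. Since $\PP$ is closed, $u\in\PP$, and symmetrically for $-\PP$.

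The main point of care is the inequality $\rho(v^-)^2\le-\langle v,v^-\rangle$ for the mixed operator, where the nonlocal cross terms must have the right sign. The rest, including uniformity of the constants in $n$, comes from \eqref{A_0}.
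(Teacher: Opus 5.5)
Your proof is correct and follows the paper's route. You test the equation for the image with its negative part, use the sign of the nonlocal cross terms to get $\rho(v^-)^2\le-\langle v,v^-\rangle$, bound $\|u^-\|_q$ by $\text{dist}(u,\PP)$, and contract. The differences are cosmetic: you treat $K_{n,\la}=L+G$ at once instead of estimating $\text{dist}(L(u),\PP)$ and $\text{dist}(G(u),\PP)$ separately and adding them, and you prove $\|u^-\|_q\le\inf_{w\in\PP}\|u-w\|_q$ via the pointwise bound $u^-\le|u-w|$ rather than citing \cite{DoMa}, which also makes explicit that $\mu_0$ can be chosen uniformly in $n$.
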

%{\cb Where is $K_{n,\la}$ defined?}
\begin{proof}
    Clearly, $\pm\DD(\mu)\subset \pm\DD(\mu_0)$ for every $\mu\in(0,\mu_0)$. Further, notice that if $u\in\PP$, $L(u),G(u)\in\PP$. To see this, observe by the definition of $L(u)$ and the inner product,
    \begin{align*}
        \langle L(u), L(u)^-\rangle&=\int_{\Om}\nabla L(u)\cdot\nabla L(u)^-\dx+\A(L(u),L(u)^-)\\&=-\rho(L(u)^-)^2-\iint_{\R^{2N}}[L(u)^+(x)L(u)^-(y)+L(u)^+(y)L(u)^-(z)]\d\mu\leq0.
    \end{align*}
    On the other hand, since $u\in\PP$, $$\langle L(u), L(u)^-\rangle=\la\int_{\Om} |u|^{p-2}u L(u)^-\dx\geq0.$$
    Hence, $L(u)^-=0$ and $L(u)\in\PP$. Similarly, we can check that $G(u)\in\PP$. Further for every $u\in X_0(\Om)$, \begin{align*}
        &\text{dist}(L(u),\PP)\rho(L(u)^-)\leq \rho(L(u)-L(u)^+)\rho(L(u)^-)=\rho(L(u)^-)^2\leq-\langle L(u),L(u)^-\rangle\\
        &=-\la\int_{\Om}|u|^{p-2}u\,L(u)^-\dx\leq \la\int_{\Om}|u|^{p-2}u^-\,L(u)^-\dx\\&=\la\int_{\Om}(u^-)^{p-1}L(u)^-\dx\leq \la\|u^-\|_{p}^{p-1}\|L(u)^-\|_{p}.
    \end{align*} 
    Observe that $$\|u^-\|_{p}=\min_{v\in\PP}\|u-v\|_{p}.$$ To see this, notice that since $u^+\in\PP$, $\min_{v\in\PP}\|u-v\|_{p}\leq \|u^-\|_{p}$. The reverse inequality follows from \cite[Proposition 1.9]{DoMa}. Thus,
    \begin{align*}
        \text{dist}(L(u),\PP)\rho(L(u)^-)&\leq\la\|u^-\|_{p}^{p-1}\|L(u)^-\|_{p}\leq C\la\left(\min_{v\in\PP}\|u-v\|_{p}\right)^{p-1}\rho(L(u)^-)\\
        &\leq C\la\left(\min_{v\in\PP}\rho(u-v)\right)^{p-1}\rho(L(u)^-)=C\la\text{dist}(u,\PP)^{p-1}\rho(L(u)^-).
    \end{align*}
    Similarly, for any $u\in X_0(\Om)$,
    \begin{align*}
        \text{dist}(G(u),\PP)\rho(G(u)^-)&\leq C\,\text{dist}(u,\PP)^{p_n-1}\rho(G(u)^-).
    \end{align*}
    Since $p,p_n>2$, we choose $\mu_0$ suf{}ficiently small so that for any $\al<\nu<1$ and for all $u\in \DD(\mu_0)$,
    \begin{align*}
        &\text{dist}(L(u),\PP)\leq C\la\text{dist}(u,\PP)^{p-1}\leq \al\,\text{dist}(u,\PP), \text{ and }\\
        &\text{dist}(G(u),\PP)\leq C\,\text{dist}(u,\PP)^{p_n-1}\leq \left(\nu-\al\right)\text{dist}(u,\PP).
    \end{align*}
    Hence, for all $u\in\DD(\mu_0)$,
    $$\text{dist}(K_{n,\la}u,\PP)\leq \text{dist}(L(u),\PP)+\text{dist}(G(u),\PP)\leq\nu\text{dist}(u,\PP).$$
    Thus, $K_{n,\la}(\DD(\mu_0))\subset \DD(\mu)$ for some $\mu<\mu_0$. Let $u\in \DD(\mu_0)\cap\KK_n$. Then,  $u=K_{n,\la}u$ and thus, $$\text{dist}(u,\PP)=\text{dist}(K_{n,\la}u,\PP)\leq\nu\,\text{dist}(u,\PP).$$ Since $\nu<1$, this implies that $u\in\PP$. The rest of the proof is similar.
\end{proof}
\begin{lemma}\label{lemma_A_2}
    Let $\la>0$. For each $k\geq1$, $$\lim_{\rho(u)\to\infty,\, u\in E_k}I_{n,\la}(u)=-\infty.$$
\end{lemma}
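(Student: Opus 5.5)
The plan is to exploit that $E_k$ is finite dimensional, so all norms on it are equivalent, and that the nonlinear terms grow faster than the quadratic term. Fix $k\ge1$ and $n$. On $E_k$ the norms $\rho(\cdot)$ and $\|\cdot\|_{p_n}$ are equivalent. So there is a constant $c_k>0$, depending on $k$ and $n$ but not on $u$, such that
$$\|u\|_{p_n}\ge c_k\,\rho(u)\quad\text{for all } u\in E_k.$$
The norm $\|u\|_{p_n}$ does vanish only at $u=0$ on $E_k\subset X_0(\Om)\subset L^{p_n}(\Om)$, so this is legitimate.

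Since $\la>0$, the term $-\frac{\la}{2}\|u\|_p^p$ is nonpositive, and we may simply drop it. This gives
$$I_{n,\la}(u)\le \frac12\rho(u)^2-\frac1{p_n}\|u\|_{p_n}^{p_n}\le \frac12\rho(u)^2-\frac{c_k^{p_n}}{p_n}\rho(u)^{p_n},\qquad u\in E_k.$$
Because $p_n>p\ge 2$ (indeed $p_n>2$ strictly), the right-hand side is a function of $t=\rho(u)$ of the form $\frac12t^2-ct^{p_n}$ with $c>0$. This function tends to $-\infty$ as $t\to\infty$, which proves the claim.

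There is essentially no obstacle here. The only points to check are the following. First, the constant $c_k$ depends on $n$, which is harmless since $n$ is fixed in the statement. If one wanted uniformity in $n$, one could instead use $\|u\|_{p_n}\ge |\Om|^{1/p_n-1/2}\|u\|_2$ together with the equivalence of $\|\cdot\|_2$ and $\rho$ on $E_k$, which gives a constant bounded below uniformly in $n$. Second, the term $\frac{\la}{2}$ in the definition of $I_{n,\la}$, presumably meant to be $\frac{\la}{p}$, has the correct sign either way, so it plays no role in the argument.
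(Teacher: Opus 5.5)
Your proposal is correct and matches the paper's proof essentially verbatim: the paper also uses the equivalence of $\rho$ and $\|\cdot\|_{p_n}$ on the finite-dimensional space $E_k$, drops the nonpositive $L^p$ term, and concludes from $p_n>2$. Your version even states the constant in the right direction, $\|u\|_{p_n}\ge c_k\rho(u)$ giving $c_k^{p_n}$, whereas the paper's $\rho(u)\le C_{k,n}\|u\|_{p_n}$ should give the factor $C_{k,n}^{-p_n}$ rather than $C_{k,n}^{p_n}$.
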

\begin{proof}
Recall $$I_{n,\la}(u)=\frac12\rho(u)^2-\frac1{p_n}\|u\|_{p_n}^{p_n}-\frac\la{p}\|u\|_p^p \quad \forall \, u \in X_0.$$
Since $E_k$ is finite dimensional, there exists $C_{k,n}>0$ such that $\rho(u)\leq C_{k,n}\|u\|_{p_n}$ for all $ u \in E_k.$ Ignoring the $L^p$ term, $$I_{n,\la}(u)\leq \frac12\rho(u)^2-\frac{C_{k,n}^{p_n}}{p_n}\rho(u)^{p_n}\quad \forall \, u \in X_0.$$
Since $p_n>2$, the claim follows.
\end{proof}
\begin{lemma}\label{lemma_A_3}
    Let $\la>0$. For any $\al_1,\al_2>0$ there exists $\al_3(\al_1,\al_2)>0$ such that $\rho(u)\leq\al_3$ for all $u\in I_{n,\la}^{\al_1}\cap\{u\in X_0:\|u\|_{p_n}\leq\al_2\}$ where $I_{n,\la}^{\al_1}=\{u\in X_0:I_{n,\la}(u)\leq\al_1\}$.
\end{lemma}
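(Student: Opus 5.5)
The bound follows directly from the definition of $I_{n,\la}$, using the $L^p$ and $L^{p_n}$ norms of $u$. Let $u\in I_{n,\la}^{\al_1}$ with $\|u\|_{p_n}\le\al_2$. From $I_{n,\la}(u)\le\al_1$ we get
\begin{align*}
\frac12\rho(u)^2\le \al_1+\frac{\la}{p}\|u\|_p^p+\frac1{p_n}\|u\|_{p_n}^{p_n}.
\end{align*}
(In the paper's display defining $I_{n,\la}$ the coefficient is written $\frac{\la}{2}$; the argument is identical with either constant.) The key point is that every norm on the right-hand side is controlled by $\al_2$, with constants independent of $n$.

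For the last term, $\frac1{p_n}\|u\|_{p_n}^{p_n}\le \frac1{p}\max\{1,\al_2\}^{2^*}$, since $p<p_n\le 2^*$. For the middle term, use $p<p_n$ and H\"older's inequality on the bounded domain $\Om$:
\begin{align*}
\|u\|_p\le |\Om|^{\frac1p-\frac1{p_n}}\|u\|_{p_n}\le \max\{1,|\Om|\}^{\frac1p}\,\al_2 .
\end{align*}
Hence $\|u\|_p^p\le C(\Om,p)\max\{1,\al_2\}^{p}$, where the constant does not depend on $n$. Combining the two estimates gives
\begin{align*}
\rho(u)^2\le 2\al_1+C(\la,p,\Om)\max\{1,\al_2\}^{2^*}=:\al_3^2 .
\end{align*}

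There is no real obstacle. The only care needed is to check that $\al_3$ can be taken independent of $n$, which matters when the lemma is later applied uniformly in the Schechter--Zou scheme. This uniformity is guaranteed because $p_n$ lies in $(p,2^*]$, so the exponents are uniformly bounded and the H\"older constant $|\Om|^{1/p-1/p_n}$ is uniformly bounded. No compactness, and neither \eqref{A_0} nor any earlier proposition, is needed.
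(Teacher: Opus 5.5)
Your proposal is correct and takes the same approach as the paper: both bound $\frac12\rho(u)^2$ by $\al_1$ plus the $L^{p_n}$ and $L^p$ terms, with the $L^p$ term controlled through H\"older's inequality on the bounded domain $\Om$ using $p<p_n$. Your explicit check that $\al_3$ does not depend on $n$ is a small addition that the paper leaves implicit; the paper's displayed bound also drops the factor $\la$ in front of the $L^p$ term, which your constant $C(\la,p,\Om)$ includes.
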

\begin{proof}
    Using the definition of $I_{n,\la}$ and H\"{o}lder's inequality,
    \begin{align*}
        \frac12\rho(u)^2\leq\al_1+\frac 1{p_n}\al_2^{p_n}+\frac1p\al_2^p|\Om|^{\frac{p_n-p}{p_n}}.
    \end{align*}
    Since $p<p_n$, the claim follows.
\end{proof}
We define 
\begin{align*}
    C_{k}^{**}(n,\la):=\sup_{E_{k}}I_{n,\la}.
\end{align*}
By Lemma \ref{lemma_A_2}, $C_{k}^{**}(n,\la)$ is well defined and $C_{k}^{**}(n,\la)<\infty$. 
\begin{lemma}\label{Upper_bound}
    There exists $T_1>0$ independent of $k$ and $n$ such that $$C_{k+1}^{**}(n,\la)\leq T_1\la_{k+1}^{\frac{p}{p-2}}.$$
\end{lemma}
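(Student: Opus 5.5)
The bound comes from dropping the critical term and maximizing a one-variable function along each ray of $E_{k+1}$. Fix $u\in E_{k+1}$. Since $E_{k+1}$ is spanned by the first $k+1$ eigenfunctions of $(-\Delta+\fra,\Om)$, orthonormal in $L^2(\Om)$ and orthogonal in $X_0(\Om)$, the Rayleigh quotient gives
\[
\rho(u)^2\le \la_{k+1}\|u\|_2^2 \quad\text{for all } u\in E_{k+1}.
\]
Next compare $\|u\|_2$ with $\|u\|_p$. Since $p>2$ and $\Om$ is bounded, Hölder's inequality gives $\|u\|_2\le |\Om|^{\frac12-\frac1p}\|u\|_p$. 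Hence
\[
\rho(u)^2\le \la_{k+1}|\Om|^{\frac{p-2}{p}}\|u\|_p^2 .
\]
Dropping the nonpositive term $-\frac1{p_n}\|u\|_{p_n}^{p_n}$ (this also removes all dependence on $n$), we get
\[
I_{n,\la}(u)\le \frac12\la_{k+1}|\Om|^{\frac{p-2}{p}}\,t^2-\frac{\la}{c_p}\,t^p,\qquad t:=\|u\|_p .
\]
Here $c_p$ is the constant in front of the $L^p$ term in $I_{n,\la}$: the definition writes $\la/2$, the subsequent lemmas use $\la/p$, and either is harmless.

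Now maximize $h(t)=a t^2-b t^p$ over $t\ge0$, with $a=\frac12\la_{k+1}|\Om|^{(p-2)/p}$ and $b=\la/c_p$. A direct computation gives
\[
\max_{t\ge0} h = \Big(\frac12-\frac1p\Big)\,(2a)^{\frac{p}{p-2}}\,(pb)^{-\frac{2}{p-2}}
= C(p,\la,|\Om|)\,\la_{k+1}^{\frac{p}{p-2}} .
\]
Taking the supremum over $u\in E_{k+1}$ yields $C_{k+1}^{**}(n,\la)\le T_1\la_{k+1}^{p/(p-2)}$ with $T_1$ depending only on $p,\la,|\Om|$, hence independent of $k$ and $n$.

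There is no real obstacle. The only points needing care are that the Rayleigh bound holds on the finite-dimensional span, which follows from the eigenfunction expansion of $\rho(u)^2$ and $\|u\|_2^2$, and that the constant stays uniform in $n$. The latter holds because the $p_n$-term was discarded and $p$ is fixed.
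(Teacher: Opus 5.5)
Your proof is correct, but you drop the opposite term from the one the paper drops. The paper discards the $\la$-term and keeps $-\frac1{p_n}\|u\|_{p_n}^{p_n}$. It bounds this by $-C_1\|u\|_2^{p_n}$ via H\"older, then trades the $n$-dependent exponent for the fixed one through $C_1x^{p_n}\ge C_2x^p-C_3$. Next it uses $\|u\|_2\ge\la_{k+1}^{-1/2}\rho(u)$, maximizes $\frac12 r^2-C_2\la_{k+1}^{-p/2}r^p$ in $r=\rho(u)$, and finally absorbs the additive $C_3$ into $T_1\la_{k+1}^{p/(p-2)}$ using $\la_{k+1}\ge\la_1>0$.

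You instead keep the term whose exponent $p$ is already fixed. Uniformity in $n$ is then automatic, there is no additive constant to absorb, and the one-variable maximization in $t=\|u\|_p$ is exact. The trade-off is in the constant: yours scales like $T_1\sim\la^{-2/(p-2)}$ and degenerates as $\la\to0^+$, whereas the paper's $T_1$ does not depend on $\la$ at all, and its argument would even go through at $\la=0$. Since $\la>0$ is fixed throughout Section~4 and the lemma only claims independence of $k$ and $n$, this difference is immaterial for the application.

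Your observation that the $\la/2$ versus $\la/p$ normalization in $I_{n,\la}$ is harmless for this upper bound is also correct.
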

\begin{proof}
    As $\{\phi_k\}$ is an orthonormal  basis of $X_0(\Om)$ (up to scaling) and $L^2(\Om)$, using the definition of  $E_{k+1}$, $$\rho(u)^2\leq \la_{k+1}\|u\|_2^2.$$

    Since $\Om$ is bounded and $2^*>p_n>p>2$, we have for all $u\in E_{k+1}$,
    \begin{align*}
        I_{n,\la}(u)\leq\frac12\rho(u)^2-\frac1{p_n}\|u\|_{p_n}^{p_n}&\leq\frac12\rho(u)^2-C_1\|u\|_{2}^{p_n} \;(\text{using } 2<p_n<2^*)\\
        &\leq \frac12\rho(u)^2-C_2\|u\|_{2}^{p}+C_3 \;(\text{using } C_1x^{p_n}\geq C_2x^{p}-C_3, x\geq0)\\
        &\leq \frac12\rho(u)^2-C_2\la_{k+1}^{-\frac{p}{2}}\rho(u)^{p}+C_3\leq C_4\la_{k+1}^{\frac{p}{p-2}}+C_3
        \leq T_1\la_{k+1}^{\frac{p}{p-2}}.
    \end{align*}
    The constants $C_1,\ldots,C_4,T_1$ are independent of $n$ and $k$.
\end{proof}
\noi \textbf{Proof of Theorem \ref{Existence_Theorem}:}
Observe that $I_{n,\la}$ satisfies the (PS)$_c$ condition for every $c\in\R$. Applying \cite[Theorem 2]{ScZo}, $I_{n,\la}$ has a nontrivial sign-changing critical point $u^*_{n,k}\in \SS^*$ at the level $C^*(n,\la,k)$, where %$C^*(n,\la,k)=\inf_{A\in\LL}\sup_{A\cap\SS^*}I_{n,\la}$ and
$$C^*(n,\la,k)\in[-\La_0,C_{k+1}^{**}(n,\la)]\subset[-\La_0,T_1\la_k^{\frac{p}{p-2}}].$$ Furthermore the augmented Morse index (the number of non-positive eigenvalues of the linearized operator) $m^*(u^*_{n,k})\geq k$ and for fixed $\la,k$, $\{u^*_{n,k}\}$ weakly solves \eqref{sub_pro} for $p_n \in (p,2^*)$.
For fixed $\la,k$, using $C^*(n,\la,k)\leq T_1\la_{k+1}^{\frac{p}{p-2}}$, we see that
\begin{align*}
\left(\frac12-\frac1{p}\right)\rho(u^*_{n,k})^2&\leq \left(\frac12-\frac1{p}\right)\rho(u^*_{n,k})^2+\left(\frac1p-\frac1{p_n}\right)\|u^*_{n,k}\|_{p_n}^{p_n}\\&=
    \frac12\rho(u^*_{n,k})^2-\frac{\la}{p}\|u^*_{n,k}\|_p^p-\frac1{p_n}\|u^*_{n,k}\|_{p_n}^{p_n}\leq T_1\la_{k+1}^{\frac{p}{p-2}}.
\end{align*}
Hence the sequence $\{u^*_{n,k}\}_n$ is uniformly bounded in $X_0(\Om)$. By Theorem \ref{Uniform_Theorem}, up to a subsequence, we get $u^*_{n,k}\to u^*_k$ in $X_0(\Om)$ and $u^*_k$ weakly solves \eqref{main_PDE}. Since no solution of \eqref{main_PDE} has negative energy, $$0\leq C^*(\la,k)=\lim_{n\to\infty} C^*(n,\la,k)\leq T_1\la_{k+1}^{\frac{p}{p-2}}.$$  
Now we show that $u^*_k$ is also sign-changing. Since $u^*_{n,k}$ weakly solves \eqref{sub_pro}, we have
\begin{equation*}
    \begin{aligned}
        &\rho((u^*_{n,k})^{\pm})^2+2\iint_{\R^{2N}}(u^*_{n,k})^+(x)(u^*_{n,k})^-(y)\d\mu\\
&=\la\|(u^*_{n,k})^{\pm}\|_p^p+\|(u^*_{n,k})^{\pm}\|_{p_n}^{p_n}\leq \la C\rho((u^*_{n,k})^\pm)^p+C\rho((u^*_{n,k})^\pm)^{p_n}.
    \end{aligned}
\end{equation*}
This implies 
$$1\leq\la C\rho((u^*_{n,k})^\pm)^{p-2}+C\rho((u^*_{n,k})^\pm)^{p_n-2}.$$
Since $2<p<p_n$, $\rho((u^*_{n,k})^\pm)\geq C>0$, where $C$ is independent of $n$, we take the limit as $n\to\infty$, to get $\rho((u^*_k)^\pm)\geq C>0$. Hence $u^*_k$ is sign-changing. To prove the infinitude of sign-changing solutions, we prove that $\lim_{k\to\infty}C^*(\la,k)=\infty$. Suppose $\lim_{k\to\infty}C^*(\la,k)=c'<\infty$. Then there exists  $n_k\in\N$ such that $\lim_{k\to\infty}C^*(n_k,\la,k)=\lim_{k\to\infty}C^*(\la,k)=c'$. Since $I_{n_k,\la}(u^*_{n_k,k})=C^*(n_k,\la,k)$ and $I_{n_k,\la}'(u^*_{n_k,k})=0$, $\{u^*_{n_k,k}\}$ is uniformly bounded in $X_0(\Om)$ and using Theorem \ref{Uniform_Theorem}, $\{u^*_{n_k,k}\}$ converges strongly to $u_0$ in $X_0(\Om)$ as $k\to\infty$ for some $u_0\in X_0(\Om)$. Set 
\begin{align*}
    V_k&:=\la (p-1)|u^*_{n_k,k}|^{p-2}-(p_{n_k}-1)|u^*_{n_k,k}|^{p_{n_k}-2}, \text{ and } \\
    V_0&:=\la (p-1)|u_0|^{p-2}-(2^*-1)|u_0|^{2^*-2}.
\end{align*}
Consider the linearized operators
\begin{align*}
    S_k\phi :=\left(\del+\fra-V_k\right)\phi=0, \, \quad k\geq0.
\end{align*}
By the Courant-Fischer min-max theorem, we write the eigenvalues as
$$\la_m(S_k):=\min_{\substack{W_m\subset X_0(\Om)\\\text{dim}(W_m)=m}}\,\max_{\substack{\phi\in W_m\\\|\phi\|_2=1}}\left(\rho(\phi)^2-\int_{\Om}V_k\phi^2\dx\right),\,\quad m\geq1, k\geq0.$$
Since $u^*_{n_k,k}\to u_0$ in $X_0(\Om)$,  $\la_m(S_k)\to \la_m(S_0)$ as $k\to\infty$ for every $m$. On the other hand, we have for any $k\geq0$, $\la_1(S_k)\leq\la_2(S_k)\leq\cdots\to\infty$.
This implies that$$m^*(u^*_{n_k,k})\leq m^*(u_0),\;\text{for large }k.$$This contradicts the fact that $m^*(u^*_{n_k,k})\geq k$ for every $k$. Hence, $C^*(\la,k)\to\infty$ as $k\to\infty$.
\qed

\appendix
\section{Norm estimates via Moser iteration}\label{A}
This section accumulates several key estimates needed for the proof of the compactness theorem. 
In the following lemma, we prove some integral estimates for the weak solution to a certain linear mixed local-nonlocal problem with a potential. 
\begin{lemma}\label{Moser_lemma}
    Let $u\in X_0(\Om)$ weakly solve the following problem 
    $$\del u+\fra u=f \text{ in } \Omega, \; u=0\text{ in } \RR \setminus \Omega.$$ Then the following hold:
    \begin{enumerate}
        \item[{\rm(i)}] Let $f \in L^p(\Om)$ for $p\in(1,\frac{N}{2})$. Then there exist $C(N,p)$ such that $$\|u\|_{\frac{Np}{N-2p}}\leq C\|f\|_p.$$ 
        \item[{\rm(ii)}] Let $f=av$, where $a\in L^{\frac N2}(\Om)$ and $v\in X_0(\Om)$. Then for $p>\frac{N}{N-2}$, there exists $C=C(N,p)$ such that $$\|u\|_p\leq C\|a\|_{\frac N2}\|v\|_p.$$
        \item[{\rm(iii)}] Let $f=av$, where $a\in L^{\frac N2}(\Om)$ and $v\in X_0(\Om)$. Then for $\frac{N}{N-2}<p_2<\frac{2N}{N-2}$, there exists $C=C(N,p_2)$ such that $$\|u\|_{p_2}\leq C\|a\|_{r}\|v\|_{2^*},$$ where $\frac1{p_2}=\frac1r+\frac{1}{2^*}-\frac{2}{N}.$
    \end{enumerate}
\end{lemma}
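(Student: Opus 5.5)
The plan is to test the equation with powers of $u$, in the spirit of Moser, and to use only the local part of the operator for coercivity. The fractional part contributes a nonnegative quantity, and this is the key structural fact. For a convex nondecreasing $\Phi$ with $\Phi(0)=0$, and for $\beta\ge 1$, the pointwise inequality
\[
(a-b)\bigl(|a|^{\beta-1}a-|b|^{\beta-1}b\bigr)\ge \frac{4\beta}{(\beta+1)^2}\Bigl(|a|^{\frac{\beta-1}{2}}a-|b|^{\frac{\beta-1}{2}}b\Bigr)^2\ge 0
\]
gives $\A(u,|u|^{\beta-1}u)\ge 0$. Hence testing with $\phi=|u|^{\beta-1}u$ (truncated, $\min\{|u|,M\}^{\beta-1}u$, then $M\to\infty$ by monotone convergence, so that the test function lies in $X_0(\Om)$) yields
\[
\frac{4\beta}{(\beta+1)^2}\int_\Om\bigl|\nabla\bigl(|u|^{\frac{\beta-1}{2}}u\bigr)\bigr|^2\dx\le \int_\Om |f|\,|u|^{\beta}\dx .
\]
Combining this with the Sobolev inequality \eqref{critical} applied to $w=|u|^{\frac{\beta-1}{2}}u\in H^1_0(\Om)$ gives
\[
\mathcal S_N\|u\|_{\frac{2^*(\beta+1)}{2}}^{\beta+1}\le C(\beta)\int_\Om|f||u|^\beta\dx .
\]
This is exactly the inequality available for $-\Delta$ alone, so all constants depend only on $N$ and the exponents, not on $s$ or $\Om$.

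For (i), choose $\beta$ so that $\frac{2^*(\beta+1)}{2}=q:=\frac{Np}{N-2p}$, that is $\beta+1=\frac{q(N-2)}{N}$. Then $\beta p'=q$, and H\"older gives $\int|f||u|^\beta\le\|f\|_p\|u\|_q^{\beta}$, hence $\|u\|_q\le C\|f\|_p$ after dividing, provided $\|u\|_q<\infty$. The condition $\beta\ge1$ requires $q\ge 2^*$, i.e.\ $p\ge \frac{2N}{N+2}$. For $1<p<\frac{2N}{N+2}$, I would instead use the test function $(\eps+|u|)^{\beta-1}u$-type truncations with $0<\beta<1$; the same monotonicity of $t\mapsto|t|^{\beta-1}t$ still makes the nonlocal term nonnegative. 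Alternatively, a duality argument works here: the operator is self-adjoint, and the range $p<\frac{2N}{N+2}$ is dual to $q'>\frac{N}{2}\cdot$, which is covered by the $\beta\ge1$ case. The finiteness of $\|u\|_q$ is handled by the truncation at level $M$: one performs the estimate with $u_M=\min\{|u|,M\}$ in the power and then lets $M\to\infty$.

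For (ii) and (iii), with $f=av$ the right-hand side is $\int|a||v||u|^\beta$. In (iii), H\"older with exponents $r$, $2^*$ and $q$ (where $q=\frac{2^*(\beta+1)}{2}=p_2$) gives $\|u\|_{p_2}^{\beta+1}\le C\|a\|_r\|v\|_{2^*}\|u\|_{p_2}^\beta$ precisely when $\frac1r+\frac1{2^*}+\frac{\beta}{p_2}=1$, which is equivalent to the stated relation. The restriction $p_2<2^*$ means $\beta<1$, so again the $\beta<1$ version or duality is needed. For (ii), H\"older with $\frac N2$, $p$ and the exponent for $|u|^\beta$ does not close directly with $\|u\|_p$ on the left; instead I would argue by duality. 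The map $T:v\mapsto u$ is $T=\GG\circ M_a$, with adjoint $M_a\circ\GG$, where $\GG=(\del+\fra)^{-1}$. By (i), $\GG:L^{t}\to L^{\frac{Nt}{N-2t}}$, so $\|av\|_{t}\le\|a\|_{N/2}\|v\|_p$ with $\frac1t=\frac2N+\frac1p$, and then $\|\GG(av)\|_{p}\le C\|a\|_{N/2}\|v\|_p$ since $\frac{Nt}{N-2t}=p$. This needs only $t>1$, i.e.\ $p>\frac{N}{N-2}$, which is exactly the hypothesis. So (ii) is a direct consequence of (i), and (iii) likewise follows from (i) with $\frac1t=\frac1r+\frac1{2^*}$ and $\frac{Nt}{N-2t}=p_2$.

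The main obstacle is therefore (i) in the low range $1<p<\frac{2N}{N+2}$, where $u$ need not be a finite-energy test function in the natural duality and $\beta<1$. I would first try the duality route. Because $\del+\fra$ is symmetric on $X_0(\Om)$, for $g\in L^{q'}$ one has
\[
\int u g=\int f\,\GG(g),
\]
and $q'>\frac{2N}{N+2}$ places $\GG g$ in the already-proved range with $\|\GG g\|_{p'}\le C\|g\|_{q'}$, the exponents matching because $\frac{Nq'}{N-2q'}=p'$. If that exponent bookkeeping fails at an endpoint, I would fall back on the truncated test function $\bigl((\eps+|u|)^{\beta}-\eps^\beta\bigr)\operatorname{sgn}u$ with $0<\beta<1$.
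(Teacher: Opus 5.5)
Your proof is correct, and its core is the paper's. The paper also tests with a power of $u$ (namely $|u|^{2\beta-2}u$ with $\beta>1$), drops the nonlocal term by monotonicity, and combines Sobolev and H\"older to get $\|u\|_{2^*\beta}\le \frac{\beta}{S_0}\|f\|_{q}$ with $q=\frac{2^*\beta}{2^*\beta-2\beta+1}$. It then obtains (ii) and (iii) exactly as you do: H\"older ($\|av\|_t\le \|a\|_{N/2}\|v\|_p$, resp.\ $\|av\|_t\le\|a\|_r\|v\|_{2^*}$) followed by (i).

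Where you differ is the range covered in (i). As $\beta$ runs over $(1,\infty)$, the paper's exponent $q$ only sweeps out $(\frac{2N}{N+2},\frac N2)$. Yet (i) is needed below $\frac{2N}{N+2}$ in several places: in (ii) for $p\le 2^*$; in (iii), where $t=\frac{Np_2}{N+2p_2}<\frac{2N}{N+2}$; and in Lemmas~\ref{lemma_11}, \ref{lemma_12} and~\ref{lemma_13}, all with exponents $p_2<2^*$.

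Your duality step supplies exactly this missing range. You use $\int_\Om ug=\int_\Om f\,\GG g$ for $g\in L^{q'}$ with $q'\in(\frac{2N}{N+2},\frac N2)$ and $\frac{Nq'}{N-2q'}=p'$. The constant still depends only on $N$ and $p$. The identity is legitimate because $\GG g\in X_0(\Om)\cap L^{p'}$ is an admissible test function for $u$, and $u$ is admissible for $\GG g$ since $g\in X_0(\Om)^*$. Your truncation $\min\{|u|,M\}^{\beta-1}u$ also makes explicit the finiteness of $\|u\|_q$ needed before dividing, which the paper's sketch leaves implicit.

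So your route costs one short extra argument but establishes the lemma on the full stated range $p\in(1,\frac N2)$. The fallback with $0<\beta<1$ test functions is not needed.

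Two small slips: ``dual to $q'>\frac N2$'' should read $q'\in(\frac{2N}{N+2},\frac N2)$. And in (ii) no adjoint enters; composing H\"older with (i) is the whole argument.
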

\begin{proof}
(i)  Following the proof of \cite[Proposition 4.1]{BhBiDa}, we obtain for $\be>1$, 
    \begin{align*}
        \|u\|_{2^*\be}^{2\be}\leq \frac{\be}{S_0}\int_{\Om}|u|^{2\be-1}\abs{f} \dx\leq \frac{\be}{S_0}\|u\|_{2^*\be}^{2\be-1}\|f\|_{\frac{2^*\be}{2^*\be-2\be+1}}.
    \end{align*}
    The last inequality follows using the H\"{o}lder's inequality with coef{}ficients $\left(\frac{2^*\be}{2\be-1},\frac{2^*\be}{2^*\be-2\be+1}\right)$. Hence,
    \begin{align}\label{i-1}
        \|u\|_{2^*\be}\leq \frac{\be}{S_0}\|f\|_{\frac{2^*\be}{2^*\be-2\be+1}}
    \end{align}
Now using the fact that 
\begin{align*}
    q=\frac{2^*\be}{2^*\be-2\be+1} \Longleftrightarrow 2^*\be=\frac{Nq}{N-2q},
\end{align*}
\eqref{i-1} concludes (i). 

\noi (ii) For $f=av$ and $q>1$, applying H\"{o}lder's inequality with exponents $(\frac{N}{2q},\frac{N}{N-2q})$, we get
    $$\|u\|_{\frac{Nq}{N-2q}}\leq \frac{\be}{S_0}\|av\|_{q}\leq \frac{\be}{S_0}\|a\|_{\frac{N}{2}}\|v\|_{\frac{Nq}{N-2q}},$$
    which concludes (ii). 
    
\noi (iii) For $p_2\in(\frac{2^*}{2},2^*)$, using (ii) we get $$\|u\|_{p_2}\leq \frac{\be}{S_0}\|av\|_{\frac{Np_2}{N+2p_2}}\leq \frac{\be}{S_0}\|a\|_{r}\|v\|_{2^*},$$ where $\frac1r=\frac{N+2p_2}{Np_2}-\frac1{2^*}=\frac1{p_2}-\frac{1}{2^*}+\frac{2}{N}.$
\end{proof}
In the following lemma, we prove a Moser iteration for the weak solution to the extension problem. 
\begin{lemma}\label{Moser_iteration}
    Let $\theta\in(0,1)$, and let $0\leq W\in \XX_{\Om}^s(\R_+^{N+1})$ weakly solve
    \begin{equation}
    \begin{cases}
        \text{div}(y^{1-2s}\nabla W)=0\text{ in }\R_+^{N+1},\\
        w(x,0)=w(x)\text{ in }\RR,\\
        -\Delta w-\theta\lim_{y\to0^+}y^{1-2s}\frac{\pa W}{\pa y}\leq a(x)w\text{ in }\Om.
    \end{cases}
\end{equation}
Then there exists $\de>0$ such that if $\displaystyle\int_{B_1^N(x_0)}|a|^{\frac{N}{2}}\dx\leq\de,$ the following holds
$$\|w\|_{L^p(B_{1/2}^N(x_0))}+\theta^{\frac1{2^*}}\|W\|_{L^p(B_{1/2}^+(x_0,0)),y^{1-2s})}\leq C\left(\|w\|_{L^1(B_{1}^N(x_0))}+\theta^{\frac1{2^*}}\|W\|_{L^1(B_{1}^+(x_0,0)),y^{1-2s})}\right)$$
for any $p\geq1$ and in any $B_1^N(x_0)\subset\Om$.
\end{lemma}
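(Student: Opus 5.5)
We will run a Moser iteration coupling the trace $w$ on $B_1^N(x_0)$ with the extension $W$ on the half ball $B_1^+(x_0,0)$. Fix radii $\frac12\le r<R\le 1$ and a cut-off $\eta\in\mathcal{C}_c^\infty(B_R^{N+1}(x_0,0))$ with $\eta=1$ on $B_r^{N+1}(x_0,0)$ and $|\nabla\eta|\le C/(R-r)$. For $\beta\ge1$ we test the weak inequality with $\phi=\eta^2W_M^{2\beta-1}$, where $W_M=\min\{W,M\}$; this truncation keeps $\phi$ in $\XX_{\Om}^s(\R_+^{N+1})$, and at the end we let $M\to\infty$. Since $\phi\ge0$, the inequality may be tested against it. The weak form gives
\begin{align*}
\int_{\Om}\nabla w\cdot\nabla(\eta^2w_M^{2\beta-1})\dx+\theta\int_{\R_+^{N+1}}y^{1-2s}\nabla W\cdot\nabla(\eta^2W_M^{2\beta-1})\dx\dy\le\int_{\Om}a\,\eta^2w\,w_M^{2\beta-1}\dx .
\end{align*}
The usual algebra (Young's inequality, and $W^\beta$ compared with $W W_M^{\beta-1}$) turns the left-hand side into
\[
c\beta^{-1}\left(\|\nabla(\eta w^\beta)\|_2^2+\theta\|\nabla(\eta W^\beta)\|^2_{L^2(y^{1-2s})}\right)-C\beta\left(\|w^\beta\nabla\eta\|_2^2+\theta\|W^\beta\nabla\eta\|_{L^2(y^{1-2s})}^2\right).
\]
We work formally with $M=\infty$ here.

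To handle the right-hand side, apply Hölder's inequality and then Sobolev's inequality:
\[
\int a\eta^2w^{2\beta}\le\|a\|_{L^{N/2}(B_1)}\|\eta w^\beta\|_{2^*}^2\le \de\,\mathcal{S}_N^{-1}\|\nabla(\eta w^\beta)\|_2^2 .
\]
This term is absorbed as soon as $\de\le c\,\mathcal{S}_N/\beta$. That requirement is the catch: $\de$ cannot depend on $\beta$. The standard remedy is the one of Brezis–Kato / Trudinger. Split $a=a\chi_{\{|a|>K\}}+a\chi_{\{|a|\le K\}}$. The first piece has small $L^{N/2}$ norm once $K$ is large, and the second is bounded, so it contributes $K\int\eta^2w^{2\beta}$. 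For a fixed finite target $p$ only finitely many $\beta$'s are used, so $K=K(p)$ is chosen accordingly. The smallness hypothesis $\de$ is used only at the first step, $\beta=1$ to $\beta=2^*/2$, exactly as in the Brezis–Kato argument, to get a first gain of integrability.

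After absorption, the Sobolev inequality for the trace and the weighted inequality \eqref{Tan-Xiong} for the extension (with $t=\frac{N+1}{N}$, so the gain factor is $\chi=\frac{N+1}{N}>1$) give
\[
\|w\|_{L^{2^*\beta}(B_r)}^{2\beta}+\theta^{\frac{2}{2^*}}\|W\|^{2\beta}_{L^{2\chi\beta}(B_r^+,y^{1-2s})}\le \frac{C\beta^2}{(R-r)^2}\Big(\|w\|^{2\beta}_{L^{2\beta}(B_R)}+\|W\|^{2\beta}_{L^{2\beta}(B_R^+,y^{1-2s})}\Big).
\]
Two bookkeeping points enter here. The factor $\theta$ in front of the extension energy is converted to $\theta^{1/2^*}$ on norms via $\theta\le\theta^{2/2^*}$, since $\theta<1$. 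The nonnegative cross term $\theta\int_{\R^N}w^{2\beta}\,\eta\pa_\nu\eta$ coming from the Neumann trace is either dropped or bounded by the flat energy term.

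Iterating along radii $r_j=\frac12+2^{-j-1}$ with exponents $\beta_j=\chi^j$ gives the $L^p$ bound from $L^2$ for every $p$, with constants depending on $p$. Finally, we pass from $L^2$ down to $L^1$ on the right by the standard interpolation and iteration lemma: $\|f\|_{L^2(B_r)}\le\epsilon\|f\|_{L^\infty(B_R)}+C_\epsilon\|f\|_{L^1(B_R)}$, or equivalently by running the iteration with $\beta\in(\frac12,1)$ and absorbing via the Giaquinta lemma on varying radii. The main obstacle is this uniform-in-$\beta$ absorption of the potential term together with the mismatch of Sobolev gains between the $N$-dimensional trace and the weighted $(N+1)$-dimensional extension. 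We resolve the mismatch by always using the smaller gain $\chi$ for both components.
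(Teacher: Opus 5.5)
Your skeleton matches the paper's proof. You test with $\eta^2W^{2\beta-1}$, apply the Sobolev inequality to the trace and \eqref{Tan-Xiong} to the extension with the common gain $\chi=\frac{N+1}{N}$, iterate on shrinking radii, and come down to $L^1$ by interpolation and the Giaquinta iteration lemma. The two places where you depart from the paper do not work.

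\textbf{The Brezis--Kato splitting.} It does not give constants depending only on $p$. The threshold $K$ that makes $\|a\chi_{\{|a|>K\}}\|_{L^{N/2}}$ small depends on the distribution function of $a$, not only on $\delta$ and $p$. For example, if $a$ equals a huge constant $M$ on a tiny set with $\int_{B_1^N(x_0)}|a|^{N/2}=\delta$, then $\int|a\chi_{\{|a|>K\}}|^{N/2}=\delta$ for every $K<M$. Since $K$ enters your constants through $K\int\eta^2w^{2\beta}$, your $C$ depends on $a$ itself.

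This matters for how the lemma is used. In Proposition~\ref{integ_prop} it is applied to $a_n=\si_n^{-1}(w_n^{p_n-2}+\la w_n^{p-2})$, and $C$ must be independent of $n$. The paper does not split $a$. It absorbs $q\|a\|_{N/2}\|\xi w^q\|_{2^*}^2$ directly into the Sobolev term, which tacitly lets $\delta$ depend on the largest $q$ used, i.e.\ on $p$. That suffices, because for fixed $p$ only finitely many steps occur and $\int_{B_1}|a_n|^{N/2}\to0$ in the application. Do the same and drop the splitting. A single $\delta$ for all $p$ with $a$-independent constants is not what Brezis--Kato delivers, and the application does not need it.

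\textbf{The $\theta$-bookkeeping.} Your displayed step inequality is not justified, and uniformity in $\theta\in(0,1)$ is essential, since $\theta=\si_n^{s-1}\to0$ in the application.
\begin{itemize}
\item The energy estimate gives only $\theta\|W\|^{2\beta}_{L^{2\chi\beta}(B_r^+,y^{1-2s})}$ on the left.
\item Because $\theta<\theta^{2/2^*}$, the inequality $\theta\le\theta^{2/2^*}$ lets you weaken the right-hand side. It does not let you put $\theta^{2/2^*}$ on the left.
\item With $\theta$ on the left and no $\theta$ on the right, feeding $\|W\|_{L^{2\chi\beta}}$ from one step into the next costs a factor $\theta^{-1/(2\beta_j)}$ per step.
\item The final constant therefore degenerates like a negative power of $\theta$, rather than producing the $\theta^{1/2^*}$-weighted estimate.
\end{itemize}

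What is needed is that the weights attached to $\int y^{1-2s}W^{m}$ match on the two sides of every step. The paper aims for this by carrying $\theta^{m/2^*}$ on $\int y^{1-2s}W^{m}$ at every level, which is where the $\theta^{1/2^*}$ in the statement comes from.

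Be aware, though, that the energy inequality itself carries $\theta$, not $\theta^{m/2^*}$, on its right-hand side. The paper closes its chain by invoking \eqref{iter_1} with $\theta$ replaced by $\theta^{k^{i-1}}$. This matching is therefore a delicate point that needs an actual argument, and your proposal does not supply one.
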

\begin{proof}
   For $1\geq R>r\geq\frac{1}{2}$ consider a  cut-of{}f function 
   $\xi$ in $\R^{N+1}$ such that $\xi=1$ in $B_r^{N+1}(x_0, 0)$,  $\text{supp}(\xi)\subset B_R^{N+1}(x_0, 0)$ and $|\nabla\xi|\leq \frac{2}{R-r}$.

   Using the test function $\phi=\xi^2W^{2q-1}, q>1$, we get
    \begin{equation*}
        \int_{\Om}\nabla w\cdot\nabla_x\phi(x,0)\dx+\theta\int_{\R_+^{N+1}}y^{1-2s}\nabla W\cdot\nabla\phi\dx\dy\leq\int_{\Om}a(x)w(x)\phi(x,0)\dx.
    \end{equation*}
Observe that
\begin{align*}
    &\int_{\Om}\nabla w\cdot\nabla_x\phi(x,0)\dx\geq \frac{1}{q}\int_{\Om}|\nabla_x(w^q\xi(x,0))|^2\dx-\frac{C}{q(R-r)^2}\int_{B_R^N(x_0)}w^{2q}\dx,\\
    &\theta\int_{\R_+^{N+1}}y^{1-2s}\nabla W\cdot\nabla\phi\dx\dy\geq \frac{\theta}{q}\int_{\R_+^{N+1}}y^{1-2s}|\nabla(W^q\xi)|^2\dx\dy-\frac{C\theta}{q(R-r)^2}\int_{B_R^+(x_0,0)}y^{1-2s}W^{2q}\dx\dy.
\end{align*}
Thus,
\begin{align*}
    &\int_{\Om}|\nabla_x(w^q\xi(x,0))|^2\dx+\theta\int_{\R_+^{N+1}}y^{1-2s}|\nabla(W^q\xi)|^2\dx\dy\\
    &\leq \frac C{(R-r)^2}\left[\int_{B_R^N(x_0)}w^{2q}\dx+\theta\int_{B_R^+(x_0,0)}y^{1-2s}W^{2q}\dx\dy\right]+q\int_{\Om}a(x)w^{2q}(x)\xi^2(x,0)\dx\\
    &\leq \frac C{(R-r)^2}\left[\int_{B_R^N(x_0)}w^{2q}\dx+\theta\int_{B_R^+(x_0,0)}y^{1-2s}W^{2q}\dx\dy\right]+q\|a\|_{\frac{N}{2}}\left(\int_{\Om}(w^{q}(x)\xi(x,0))^{2^*}\dx\right)^{\frac2{2^*}}.
\end{align*}
By the Sobolev inequality and the trace inequality, we get
\begin{align*}
    &\int_{\Om}|\nabla_x(w^q\xi(x,0))|^2\dx+\theta\int_{\R_+^{N+1}}y^{1-2s}|\nabla(W^q\xi)|^2\dx\dy\\
    &\leq \frac C{(R-r)^2}\left[\int_{B_R^N(x_0)}w^{2q}\dx+\theta\int_{B_R^+(x_0,0)}y^{1-2s}W^{2q}\dx\dy\right].
\end{align*}
where $C>0$ is independent of $N$. By \eqref{Tan-Xiong}, for any $1\leq k\leq\frac{N+1}{N}+\de$, using $\frac12\leq r<R\leq1$, we have 
\begin{align*}
    &\left(\int_{B_r^N(x_0)}w^{2kq}\dx+\theta^k\int_{B_r^+(x_0,0)}y^{1-2s}W^{2kq}\dx\dy\right)^{\frac1k}\\
    &\leq \left(\int_{B_R^N(x_0)}(w^q\xi(x,0))^{2k}\dx\right)^{\frac1k}+\theta\left(\int_{B_R^+(x_0,0)}y^{1-2s}(W^q\xi)^{2k}\dx\dy\right)^{\frac1k}\\
    &\leq\frac C{(R-r)^2}\left[\int_{B_R^N(x_0)}w^{2q}\dx+\theta\int_{B_R^+(x_0,0)}y^{1-2s}W^{2q}\dx\dy\right].
\end{align*}
Hence for any $1\leq k\leq\frac{N+1}{N}+\de$ and for any $q>1$, we have
\begin{align}
    &\left(\int_{B_r^N(x_0)}w^{2kq}\dx+\theta^k\int_{B_r^+(x_0,0)}y^{1-2s}W^{2kq}\dx\dy\right)^{\frac1{2kq}}\nonumber\\
    &\leq\left(\frac C{R-r}\right)^{\frac1q}\left(\int_{B_R^N(x_0)}w^{2q}\dx+\theta\int_{B_R^+(x_0,0)}y^{1-2s}W^{2q}\dx\dy\right)^{\frac1{2q}}.\label{iter_1}
\end{align}
Let $k>1$, $\frac12\leq r^*<R^*\leq1$ and define $r_i=r^*+\frac{1}{2^i}(R^*-r^*)$, $i\geq0$. Then $r_{i+1}-r_i=\frac1{2^{i+1}}(R^*-r^*)$. Since \eqref{iter_1} is true for any $\theta\in(0,1)$, iterating \eqref{iter_1} with $R=r_i$, $r=r_{i+1}$ and $q=\frac{k^{i-1}2^*}{2}$, we get 
\begin{align*}
    &\left(\int_{B_{r_{i+1}}^N(x_0)}w^{k^{i}2^*}\dx\right)^{\frac1{k^{i}2^*}}+\left(\theta^{k^i}\int_{B_{r_{i+1}}^+(x_0,0)}y^{1-2s}W^{k^{i}2^*}\dx\dy\right)^{\frac1{k^{i}2^*}}\\
    &\leq C\left(\int_{B_{r_{i+1}}^N(x_0)}w^{k^{i}2^*}\dx+\theta^{k^i}\int_{B_{r_{i+1}}^+(x_0,0)}y^{1-2s}W^{k^{i}2^*}\dx\dy\right)^{\frac1{k^{i}2^*}}\\
    &\leq\left(\frac {C2^{i+1}}{R^*-r^*}\right)^{\frac2{k^{i-1}2^*}}\left(\int_{B_{r_i}^N(x_0)}w^{k^{i-1}2^*}\dx+\theta^{k^{i-1}}\int_{B_{r_i}^+(x_0,0)}y^{1-2s}W^{k^{i-1}2^*}\dx\dy\right)^{\frac1{k^{i-1}2^*}}\\
    &\leq \frac {C}{(R^*-r^*)^{\sum_{j=1}^i\frac2{k^{j-1}2^*}}}\left(\int_{B_{R^*}^N(x_0)}w^{2^*}\dx+\theta\int_{B_{R^*}^+\theta(x_0,0)}y^{1-2s}W^{2^*}\dx\dy\right)^{\frac1{2^*}}.
\end{align*}
Hence for any $p>2^*$, $\frac12\leq r<R\leq 1$ and for some $\si'>0$,
\begin{align*}
    &\left(\int_{B_{r}^N(x_0)}w^{p}\dx\right)^{\frac1{p}}+\theta^{\frac1{2^*}}\left(\int_{B_{r}^+(x_0,0)}y^{1-2s}W^{p}\dx\dy\right)^{\frac1{p}}\\
    &\leq \frac{C}{(R-r)^{\sigma'}}\left(\int_{B_{R}^N(x_0)}w^{2^*}\dx\right)^{\frac1{2^*}}+\theta^{\frac1{2^*}}\frac{C}{(R-r)^{\sigma'}}\left(\int_{B_{R}^+(x_0,0)}y^{1-2s}W^{2^*}\dx\dy\right)^{\frac1{2^*}}.
\end{align*}
Let $1<2^*<p$, then by H\"{o}lder's inequality and Young's inequality,
\begin{align*}
    \frac{C}{(R-r)^{\si'}}\|w\|_{L^{2^*}(B_R^N(x_0))}&\leq \frac{C}{(R-r)^{\si'}}\|w\|_{L^1(B_R^N(x_0))}^{\kappa}\|w\|_{L^{p}(B_R^N(x_0))}^{1-\kappa}\\
    &\leq \frac12\|w\|_{L^{p}(B_R^N(x_0))}+\frac{C}{(R-r)^{\si''}}\|w\|_{L^1(B_R^N(x_0))},
\end{align*}
similarly,
$$\frac{C}{(R-r)^{\si'}}\|W\|_{L^{2^*}(B_R^{N+1}(x_0,0), y^{1-2s})}\leq \frac12\|W\|_{L^{p}(B_R^{N+1}(x_0,0), y^{1-2s})}+\frac{C}{(R-r)^{\si''}}\|W\|_{L^{1}(B_R^{N+1}(x_0,0), y^{1-2s})},$$
where $\kappa\in(0,1)$ and $\si''>0$ are constants. Thus we obtain for any $p>2^*$ and $\frac12\leq r<R\leq1$,
\begin{align*}
    &\|w\|_{L^{p}(B_r^N(x_0))}+\theta^{\frac1{2^*}}\|W\|_{L^{p}(B_r^{N+1}(x_0,0), y^{1-2s})}\\
    &\leq \frac12\left(\|w\|_{L^{p}(B_R^N(x_0))}+\theta^{\frac1{2^*}}\|W\|_{L^{p}(B_R^{N+1}(x_0,0), y^{1-2s})}\right)\\&\quad+\frac{C}{(R-r)^{\si''}}\left(\|w\|_{L^{1}(B_R^N(x_0))}+\theta^{\frac1{2^*}}\|W\|_{L^{1}(B_R^{N+1}(x_0,0), y^{1-2s})}\right)
\end{align*}
Using an iteration argument (see \cite[Lemma 1.1]{GiGi}), we obtain for any $p>2^*$ and for some $\si>0$,
\begin{align*}
    &\left(\int_{B_{r}^N(x_0)}w^{p}\dx\right)^{\frac1{p}}+\theta^{\frac1{2^*}}\left(\int_{B_{r}^+(x_0,0)}y^{1-2s}W^{p}\dx\dy\right)^{\frac1{p}}\\
    &\leq \frac{C}{(R-r)^{\sigma}}\left(\int_{B_{R}^N(x_0)}w\dx+\theta^{\frac1{2^*}}\int_{B_{R}^+(x_0,0)}y^{1-2s}W\dx\dy\right),
\end{align*}
Finally, using interpolation, the required estimate holds for every $p\geq1$.
\end{proof}

\vspace{5mm}

\noi \textbf{Acknowledgement:}
We sincerely thank Prof. Debabrata Karmakar for many insightful discussions and for proposing the use of the Caf{}farelli-Silvestre extension in estimating certain integrals. The research of Mousomi Bhakta is supported by the Swarnajaynti Fellowship (SB/SJF/2021-22/09) and the ANRF-MATRICS grant (ANRF/ARGM/2025/000126/MTRDST). Nirjan Biswas acknowledges the support of the National Board for Higher Mathematics Postdoctoral Fellowship (0204/16(9)/2024/RD-II/6761). The research of Paramananda Das is supported by the National Board for Higher Mathematics PhD Fellowship (0203/5(38)/ 2024-R\&D-II/11224). Paramananda Das also expresses his gratitude to TIFR-CAM for its hospitality during his visit in February 2026. 

% \vspace{0.1cm}

% \noindent \textbf{Competing interests:} The authors have no competing interests to declare that are relevant to the content of this article.

% \vspace{0.1cm} 

% \noindent\textbf{Data availability statement:} Data sharing is not applicable to this article as no data sets were generated or analysed during the current study.

% \vspace{5mm}

\bibliographystyle{abbrvnat}

\end{document}